\definecolor{dkblue}{RGB}{1,31,91} 
\newcommand{\Blue}{}
\theoremstyle{definition}
\newtheorem{theorem}{Theorem}
\newtheorem{corollary}[theorem]{Corollary}
\newtheorem{lemma}[theorem]{Lemma}
\newtheorem{definition}[theorem]{Definition}
\newtheorem{proposition}[theorem]{Proposition}
\newtheorem{remark}[theorem]{Remark}
\newtheorem{hypothesis}[theorem]{Hypothesis}
\numberwithin{equation}{section}
\numberwithin{theorem}{section}
\newcommand{\R}{\mathbb{R}}
\newcommand{\eqdef }{\overset{\mbox{\tiny{def}}}{=}}
\newcommand{\rtw}{{\mathbb{R}^2}}
\begin{document}

\keywords{Vlasov equation, Maxwell equations, Plasma dynamics, and Magnetic confinement.}
\subjclass[2010]{Primary 35Q83, 35Q61, 82C40, and 82D10. }

\title[Magnetic confinement for the Vlasov-Maxwell system]{Magnetic confinement for the 2D axisymmetric relativistic Vlasov-Maxwell system in an annulus}

\author[J. W. Jang]{Jin Woo Jang$^*$}
\address{$^*$Department of Mathematics, Pohang University of Science and Technology, Pohang, Republic of Korea.  \href{mailto:jangjw@postech.ac.kr}{jangjw@postech.ac.kr} (\href{https://orcid.org/0000-0002-3846-1983}{https://orcid.org/0000-0002-3846-1983})}
\thanks{$^*$Supported by the German DFG grant CRC 1060, the Korean Basic Science Research Institute Fund NRF-2021R1A6A1A10042944 and the Korean IBS grant IBS-R003-D1.}

\author[R. M. Strain]{Robert M. Strain$^\dagger$}
\address{$^\dagger$Department of Mathematics, University of Pennsylvania, Philadelphia, PA 19104, USA.  \href{mailto:strain@math.upenn.edu}{strain@math.upenn.edu} (\href{https://orcid.org/0000-0002-1107-8570}{https://orcid.org/0000-0002-1107-8570})}
\thanks{$^\dagger$Partially supported by the NSF grants DMS-1764177 and DMS-2055271 of the USA}

\author[T. K. Wong]{Tak Kwong Wong$^\ddagger$}
\address{$^\ddagger$Department of mathematics, The University of Hong Kong, Pokfulam, Hong Kong. \href{mailto:takkwong@maths.hku.hk}{takkwong@maths.hku.hk} (\href{https://orcid.org/0000-0002-6454-984X}{https://orcid.org/0000-0002-6454-984X})}
\thanks{$^\ddagger$Partially supported by the HKU Seed Fund for Basic Research under the project code 201702159009, the Start-up Allowance for Croucher Award Recipients, Hong Kong General Research Fund (GRF) grant ``Solving Generic Mean Field Type Problems: Interplay between Partial Differential Equations and Stochastic Analysis'' with project number 17306420, and Hong Kong GRF grant ``Controlling the Growth of Classical Solutions of a Class of Parabolic Differential Equations with Singular Coefficients: Resolutions for Some Lasting Problems from Economics’’ with project number 17302521.}

\begin{abstract}
  Although the nuclear fusion process has received a great deal of attention in recent years, the amount of mathematical analysis that supports the stability of the system seems to be relatively insufficient. This paper deals with the mathematical analysis of the magnetic confinement of the plasma via kinetic equations. We prove the global wellposedness of the \textit{Vlasov-Maxwell} system in a two-dimensional annulus when a huge (\textit{but finite-in-time}) external magnetic potential is imposed near the boundary. We assume that the solution is axisymmetric. The authors hope that this work is a step towards a more generalized work on the three-dimensional Tokamak structure. The highlight of this work is the physical assumptions on the external magnetic potential well which remains finite \textit{within a finite time interval} and from that, we prove that the plasma never touches the boundary. In addition, we provide a sufficient condition on the magnitude of the external magnetic potential to guarantee that the plasma is confined in an annulus of the desired thickness which is slightly larger than the initial support. Our method uses the cylindrical coordinate forms of the \textit{Vlasov-Maxwell} system.  
\end{abstract}

\dedicatory{In memory of Robert Glassey}

\thispagestyle{empty}
\maketitle
\tableofcontents

\section{Introduction}

\subsection{Motivation}
This paper studies the effect of a large external magnetic field on the initial and boundary value problem for the two-dimensional relativistic \textit{Vlasov-Maxwell} system for initial data of unrestricted size. 

The magnetic confinement of a collisionless plasma has received a great deal of attention in both mathematical and numerical perspectives, as it is the main principle of the nuclear fusion process and there is no doubt that nuclear fusion is one possible future production of electrical energy \cite{ongena2016magnetic}. The dynamics of a plasma have been interpreted numerically and analytically via the magnetohydrodynamic (MHD) fluid equation and the kinetic \textit{Vlasov-Maxwell} system, though the computational challenges for the simulations of the tokamak process have been studied in \cite{fasoli2016computational}. This paper deals with the kinetic approach to the magnetic confinement of the plasma via a theoretical study of the relativistic \textit{Vlasov-Maxwell} system.

 Indeed, the \textit{Vlasov} equation without the presence of an external magnetic field has been extensively studied. Here we introduce a small number of results from the literature on the \textit{Vlasov-Maxwell} system by Degond \cite{MR870991}, DiPerna-Lions \cite{MR1003433}, Glassey-Strauss \cite{MR816621, MR881551}, Glassey-Schaeffer \cite{MR969207, MR1066384, MR1463042, MR1620506, MR1777635} , Horst \cite{MR1088081}, Guo \cite{MR1224079, MR1354697, MR1459591, MR1671928},  Rein \cite{MR1086751}, Bouchut-Golse-Pallard \cite{MR2012645}, Klainerman-Staffilani \cite{MR3318184}, and Strain \cite{MR2259206}. Recent results include the work on the Strichartz estimates \cite{MR3437855} and continuation criteria \cite{MR816621, MR3373411,MR3248056,MR3721415}. Regarding the rigorous derivation of the Vlasov equations, see the work of Dobrushin \cite{dobrushin1979vlasov}.

Regarding the situation where one applies a large external magnetic field to the system, the general theory of confining devices such as tokamaks and stellarators were studied in \cite{MR1266242, MR1850780}. 
Regarding the magnetic confinement for the \textit{Vlasov-Poisson} system, we mention the work of \cite{MR2733252, MR2997588, MR3148082, MR3318184, MR3723427} and the numerical results of \cite{MR3709891,MR3568167,MR3485969,MR3782409}. 
Regarding the magnetic confinement problem for the \textit{Vlasov-Maxwell} system in the presence of the effect of the self-consistent magnetic field, we mention the analytic proof by Nguyen-Nguyen-Strauss in a two-dimensional infinite strip with a symmetry in $x_2$-direction \cite{MR3294216,MR3404774}. Also, we introduce that Filbet and Rodrigues in \cite{1811.09087} generalized the work of \cite{MR3568167} in the large magnetic field limit.

To the best of authors' knowledge, there has been no result on the magnetic confinement of the full \textit{Vlasov-Maxwell} system by a finite external magnetic potential, with which the plasma \textit{never} collides with the boundary. The authors believe that, even though one can show that the plasma will eventually converge to some confined steady states in time, the confinement will not be ideal if the plasma can collide with the boundary during some initial time-interval. This is because just a tiny amount of hot particles will destroy the nuclear fusion reactor in reality.

This paper is devoted to introducing an analytic proof for the magnetic confinement for the two-dimensional \textit{Vlasov-Maxwell} system in an annulus by a large \textit{but finite-in-time} external magnetic potential, which is the first step to the full three dimensional analysis in a toroidal geometry for the actual tokamaks or stellarators. We prove that the finite potential is large enough to confine the plasma for all time in $[0,T]$ such that the plasma never touches the boundary. In addition, we provide a sufficient condition on the magnitude of the external magnetic potential to guarantee that the plasma is confined to an annulus of the desired thickness which is slightly larger than the initial support.

One of the difficulties that arises in the implementation of the magnetic confinement by imposing a finite external magnetic potential is the fact that the self-consistent electromagnetic fields are also growing in time and that these fields indeed affect the behavior of the particle trajectory. 
In order to implement it, one first needs to estimate the precise upper-bounds of the growth of the self-consistent fields that are coupled to the \textit{Vlasov} equation. In the geometry of an annulus, another difficulty arises because the fields can be accelerating each other via the \textit{Maxwell} equations even under the symmetry in the angular direction. This can be shown via estimating the fields as solutions to some wave-type equations by the method of characteristics and via obtaining an energy identity that is related to the Poynting theorem in cylindrical coordinates. Once we obtain the upper-bounds for the growth of the self-consistent fields, then we solve the characteristic ODEs for the displacement and the velocity of a particle trajectory to obtain the displacement in terms of the velocity and the forcing effects from the fields. By writing the forcing effects in terms of the potentials, we can derive the maximum displacement for the trajectory in terms of the upper-bounds of the self-consistent fields and the external magnetic potential. Then, we can carefully determine an assumption on the magnitude of the external magnetic potential that we impose in the interior so that it overcomes the repulsive effects of the growing self-consistent fields and dominates the behavior of the particle trajectory so that the huge but \textit{finite-in-time} external magnetic potential guides the particle trajectory to the center of the potential well. It was crucial to determine the magnitude of the external magnetic potential so that it is large enough to control the particle trajectory but at the same time it is not infinite. Once we obtain the bounds for the quantities that are related to the particle trajectory, then we proceed and obtain the desired estimates for the particle distribution and the coupled fields which we use to prove the global wellposedness.

\subsection{The relativistic \textit{Vlasov-Maxwell} system} 
The two-dimensional relativistic \textit{Vlasov} equation under forcing fields $\vec{E}=(E_1,E_2)$ and $\bar{B}$ reads as 
\begin{equation}
\label{Vlasov}
\partial_t f +\hat{p}\cdot \nabla_xf+(E_1+\hat{p}_2\bar{B},E_2-\hat{p}_1\bar{B})\cdot \nabla_p f=0,
\end{equation} where $f=f(t,x,p)$ is a non-negative distribution function of a single species of charged particles at a certain time $t\in [0,T]$ for $T>0$, at a particular location $x\in\Omega\subset \rtw$, with the momentum $p\in\rtw$. 
For mathematical simplicity, we have already normalized all physical constants including the rest mass, the charge, and the speed of light to be 1 without loss of generality. Here, the velocity $\hat{p}$ is defined as $\hat{p}\eqdef \frac{p}{p^0}$, where $p^0\eqdef \sqrt{1+|p|^2}.$ Throughout the paper we assume that the spatial/physical domain $\Omega$ is a two-dimensional annulus, which can be described as 
$$\Omega\eqdef \{x\in \rtw: r_1 <|x|< r_2\},$$ for some given constants $r_1$ and $r_2$ satisfying $0<r_1<r_2.$ The magnetic field $\bar{B}$ in \eqref{Vlasov} consists of two components $B(t,x)$ and $B_{ext}(t,x)$, 
\begin{equation*}
    \bar{B} = B(t,x)+B_{ext}(t,x),
\end{equation*}
where $B_{ext}$ is an external magnetic field that will be chosen to be increasing in time and as $x$ gets closer to the boundary. The self-consistent electric field $(E_1,E_2)$ and magnetic field $B$ satisfy the following Maxwell equations:
\begin{equation}\label{Maxwell}
\begin{split}
\partial_{x_1}E_1+\partial_{x_2}E_2&=\rho,\\
\partial_tE_1& = \partial_{x_2}B-j_1,\\
\partial_tE_2& = -\partial_{x_1}B-j_2,\\
\partial_tB&=\partial_{x_2}E_1-\partial_{x_1}E_2,
\end{split}
\end{equation}
where the macroscopic charge density $\rho$ is defined as $\rho\eqdef \int_\rtw f dp$, and the $i$-th component of the current density is $j_i \eqdef \int_\rtw \hat{p}_i f dp$ for $i=1$, $2$.

We are interested in considering the \textit{Vlasov-Maxwell} system in the cylindrical-coordinates as our physical domain is a two-dimensional annulus. Therefore, we consider the change of coordinates $(x,p)\in \Omega \times \rtw\mapsto (r,\theta,p_r,p_\theta)$ where 
\begin{equation}\notag
\begin{split}
r&\eqdef \sqrt{x_1^2+x_2^2},\qquad\theta\eqdef \arctan\left(\frac{x_2}{x_1}\right),\\
p_r&\eqdef \frac{p_1x_1+p_2x_2}{\sqrt{x_1^2+x_2^2}},\text{ and }
p_\theta\eqdef \frac{p_2x_1-p_1x_2}{\sqrt{x_1^2+x_2^2}}.
\end{split}
\end{equation} 
Note that the Jacobian determinant for the changes of variables $x\mapsto (r,\theta)$ and $p\mapsto (p_r,p_\theta)$ are $r^{-1}$ and $1$, respectively. Note that $|x|=r$ and $|p|=\sqrt{p_r^2+p_\theta^2}.$
Then we obtain that the Vlasov equation \eqref{Vlasov} is now equal to 
\begin{multline}
\label{pVlasov}
\partial_t f +\hat{p}_r\partial_rf+\hat{p}_\theta \frac{1}{r}\partial_\theta f\\+\left(E_r+\hat{p}_\theta\bar{B}+\frac{p^0\hat{p}_\theta^2}{r}\right)\partial_{p_r}f+\left(E_\theta-\hat{p}_r\bar{B}-\frac{p^0\hat{p}_r\hat{p}_\theta}{r}\right)\partial_{p_\theta} f=0,
\end{multline} where $p^0=\sqrt{1+p_r^2+p_\theta^2}$, $\hat{p}_r\eqdef \frac{p_r}{p^0}$, $\hat{p}_\theta \eqdef \frac{p_\theta}{p^0}$, and $E_1\hat{e}_1+E_2\hat{e}_2=E_r\hat{r}+E_\theta\hat{\theta}$ with $\hat{e}_1\eqdef (1,0)$, $\hat{e}_2\eqdef (0,1)$, $\hat{r}\eqdef (\cos\theta, \sin\theta),$ and $\hat{\theta}\eqdef (-\sin \theta, \cos\theta)$ such that 
\begin{equation}
E_1=E_r\cos\theta-E_\theta\sin\theta\quad \text{and}\quad 
E_2=E_r\sin\theta +E_\theta\cos\theta.
\end{equation} 
This change of coordinates is standard in the nonrelativistic case, 
see \cite{Tasso2014, 2016PhDVogman} for instance. Note that the non-relativistic Vlasov equation in the cylindrical coordinates includes the additional
acceleration terms $\frac{p_\theta^2}{r}$ (the centrifugal force) and $\frac{p_rp_\theta}{r}$ (the Coriolis force).   For the relativistic case, we have one more contribution of $p^0$ in the denominators of these additional terms, and we obtain the forces $\frac{p^0\hat{p}_\theta^2}{r}$ and $\frac{p^0\hat{p}_r\hat{p}_\theta}{r}$ as in \eqref{pVlasov}. Under the same change of variables, Maxwell's equations \eqref{Maxwell} now become 
\begin{equation}\label{pMaxwell}
\begin{split}	
\frac{1}{r}\partial_r(rE_r)+\frac{1}{r}\partial_{\theta}E_\theta&=\rho,\\
\partial_tE_r& = \frac{1}{r}\partial_{\theta}B-j_r,\\
\partial_tE_\theta& = -\partial_{r}B-j_\theta,\\
\partial_tB&=\frac{1}{r}\partial_{\theta}E_r-\frac{1}{r}\partial_{r}(rE_\theta),
\end{split}
\end{equation}where $j_r$ and $j_\theta$ are the macroscopic current densities defined as $j_r\eqdef \int_\rtw \hat{p}_rfdp_rdp_\theta$ and $j_\theta\eqdef \int_\rtw \hat{p}_\theta fdp_rdp_\theta$, such that 
$$\vec{j}\eqdef (j_1,j_2)= j_r \hat{r}+j_\theta \hat{\theta}.$$

In addition, we further assume that all of the $f$, $E_r$, $E_\theta$, and $B$ are rotationally symmetric around the center of the annulus, namely \begin{equation}
\label{eqrotsym}f= f(t,r,p_r,p_\theta)\quad \text{ and }\quad
 (E_r,E_\theta,B)= (E_r,E_\theta,B)(t,r).
\end{equation}
We note that this symmetry is propagated by the Vlasov-Maxwell system \cite{MR751989}.

\subsection{Initial and boundary conditions}

We assume that $(f,E_r,E_\theta,B)$ has the following initial data of unrestricted size:
\begin{equation}\label{initial}
 \begin{split}
 f(0,r,p_r,p_\theta)&=f^0(r,p_r,p_\theta)\geq 0,\\
 E_\theta(0,r)&=E_\theta^0(r),\\
 B(0,r)&=B^0(r),\\
 E_r(0,r_1)&=\lambda\in \mathbb{R},
\end{split} 
\end{equation}
where $E^0_\theta$ and $B^0$ are given $C^1$ functions. Indeed, all the initial values of $E_r(0,r)$ for all $r\in[r_1,r_2]$ can be uniquely determined by directly integrating Gauss's law $\eqref{pMaxwell}_1$ and using the given initial conditions $\eqref{initial}_1$, $\eqref{initial}_2$, and $\eqref{initial}_4$.  For the initial distribution $f^0$ of particles, we assume that $f^0\in C^1((r_1,r_2) \times \rtw)$ and $f^0$ is compactly supported in the $r$ and $p$ variables in the following sense:  \begin{equation}\label{initialsupport}\mathrm{\mathrm{supp}}(f^0)\subseteq \left\{(x,p)\in\Omega\times\R^2: r\in I_0 \mbox{ and } |p|\leq M_0 \right\},
\end{equation}
where 
\begin{equation*}
    I_0\eqdef [r_1+\delta_0,r_2-\delta_0],
    \quad 
    \text{for some constant} 
    \quad 
    \delta_0 \in \left(0,\frac{r_2-r_1}{2}\right),
\end{equation*}
and $M_0$ is the maximal radius of the initial momentum support as 
\begin{equation}\label{initialmomentumsupport}
 M_0\eqdef \sup\{|p|: p\in \rtw \text{ and }f^0(x,p)\neq 0 \mbox{ for some }x\in\mathbb{R}^2 \} <\infty.
\end{equation}

In addition, we also assume the boundary conditions for the self-consistent fields that 
\begin{equation}\label{boundary}
 \begin{split}
 E_\theta(t,r_1)&=E_\theta^b(t,r_1),\\
 E_\theta(t,r_2)&=E_\theta^b(t,r_2),
\end{split}
\end{equation}holds where $E^b_\theta$ is given axisymmetric $C^1$ function defined on the boundary $\partial\Omega$. Then we claim that the boundary conditions \eqref{boundary} uniquely determine the boundary values $B^b$ of $B(t,r)$ at $r=r_1$ and $r=r_2$ and the boundary values do not blow up in a finite time. This will be shown in Lemma \ref{Pplus lemma} and Remark \ref{missingboundary remark}.

\begin{remark}We remark that if we further assume the boundary conditions for both $E^b_\theta$ and $B^b$ at both boundaries $r=r_1$ and $r=r_2$, then the system is over-determined. One must assume only one condition on either $E^b_\theta$ or $B^b$ for each $r=r_1$ and $r=r_2.$ Our boundary condition \eqref{boundary} is one of the possible boundary conditions, and this condition makes the calculations below the simplest due to the presence of additional $B$ on the right-hand side of \eqref{transport}. In general, we find that mixed-type boundary conditions are also fine, but we believe one should consider estimating the quantity $\|E_\theta (t)\|_{L^\infty} + \|B (t)\|_{L^\infty}$ at \eqref{Bestimate} in Proposition \ref{aprioriestimatesEB} in this case. We also note that the only mixed-type boundary condition that we do not allow is the boundary conditions for $P_+\eqdef r(E_\theta+B)$ for $r=r_2$ as in \eqref{P+} or  $P_-\eqdef r(E_\theta-B)$ for $r=r_1$ as in \eqref{P-}. In these cases, the system is again over-determined and needs a compatibility condition between the initial conditions and the boundary conditions due to the characteristic trajectory \eqref{transport}.
\end{remark}

\subsection{A finite external magnetic potential on the boundary}In this section, we introduce the external magnetic potential that we impose on the system, whose role is crucial for the magnetic confinement of the plasma. 

Before we introduce the finite time-dependent external magnetic potential $\psi_{ext}$, we first introduce an infinite potential $\psi_{base}$ which works as a prototype for the finite potential in the construction. 
The finite time-dependent external magnetic potential $\psi_{ext}$ will be constructed via the truncation of a time-independent infinite external potential $\psi_{base}$, and this will be introduced in Section \ref{confinement}. The key idea behind the construction of a time-dependent finite external potential is to establish a time-dependent \textit{moving bar} $L_{bar}(t)$ as in Hypothesis \ref{extmagassumption}. The \textit{moving bar} is growing in time and the role of it is to provide the minimal growth rate of the external potential. As long as it is larger than the maximal kinetic energy that each particle can have near the boundary, the particles are well-confined and the external potential can be finite near the boundary. This will be introduced more in detail in Section \ref{sec.truncated}.    We remark that the sufficient conditions that we require on the time-independent infinite potential $\psi_{base}$ are as follows: 
\begin{hypothesis}\label{psibasegeneral}We suppose that the time-independent magnetic potential $\psi_{base}=\psi_{base}(r)$ satisfies the following assumptions; for a given distance $\delta\in (0,\delta_0)$ from the spatial boundary $\partial\Omega$, we assume
\begin{enumerate}
	\item $\psi_{base} \in C^2((r_1+\delta,r_2-\delta)).$
	\item $\psi_{base}$ satisfies
	\[	\lim_{r\to (r_1+\delta)^+} | \psi_{base}(r) | = \lim_{r\to (r_2-\delta)^-} | \psi_{base}(r) | = \infty .	\]
\end{enumerate} In the intervals $[r_1,r_++\delta)$ and $(r_2-\delta,r_2]$, $\psi_{base}$ can take any arbitrary value.  We recall that the constant satisfies
    $\delta_0 \in \left(0,\frac{r_2-r_1}{2}\right)$.
\end{hypothesis}

\begin{remark}
Setting $|\psi_{base}|=\infty$ in these two sub-intervals $[r_1,r_++\delta)$ and $(r_2-\delta,r_2]$ is consistent with the definition in \eqref{Udelta0} and Remark \ref{rem:ConfinementDistance}, but it is not required.  One may have a small complaint on which $|\psi_{base}|=\infty$ in some open intervals is non-physical because this creates an infinitely strong external magnetic force. However, this accusation is also a fantasy, because the real/actual/physical external magnetic field (that we use) is always the $\psi_{ext}$ defined in \eqref{finitepsiext} instead of the  $\psi_{base}$, which is just a ``reference'' potential.
\end{remark}

\begin{remark}Hypothesis \ref{psibasegeneral} implies that, for any $L>0$, the set
\begin{equation}\label{UL}
	S_L \eqdef \left\{ x \in \Omega; \ |\psi_{base}(x)| \leq L \right\}
\end{equation}
is a compact, and hence it is a proper subset of the open set $\Omega\eqdef \{x\in \rtw: r_1 <|x|< r_2\}$. This will be sufficient to guarantee a positive distance away from the spatial boundary $\partial\Omega$.\end{remark}

\begin{remark}\label{remark1.4}One of the explicit examples of $\psi_{base}$ is
	\begin{equation}\label{psibasedef} \psi_{base}(r)\eqdef  \csc \left(\frac{\pi}{r_2-r_1}(r-r_1)\right)-1.
	\end{equation}
\end{remark}

Then we can construct a finite time-dependent external magnetic potential $\psi_{ext}\eqdef \psi_{ext}(t,r)$ as follows: 
\begin{hypothesis}[Hypothesis on the external magnetic potential]\label{extmagassumption}
Let us denote\\ the median radius as $r_m\eqdef \frac{r_1+r_2}{2}$. 
Then we define the external magnetic potential $\psi_{ext}=\psi_{ext}(t,r)$, using the prototype potential $\psi_{base}$ in Hypothesis \ref{psibasegeneral}, as 
\begin{equation}\label{finitepsiext}
\psi_{ext}(t,r)\eqdef \begin{cases}
&\psi_{base}(r),\ \text{if }\psi_{base}(r)\le L_{bar}(t)\\
&L_{bar}(t)+1,\ \text{if }\psi_{base}(r)\ge L_{bar}(t) +1,\\
&\text{smooth, if }L_{bar}(t)\le \psi_{base}(r)\le L_{bar}(t)+1,
\end{cases}
\end{equation}
where the \textit{moving bar} $L_{bar}(t)$ is defined as
\begin{equation}\label{movingbargeneral def}
\begin{split}
L_{bar}(t)
&\eqdef\max_{x\in U_{\delta_0}(t)} |\psi_{base}(x)|,
\end{split}
\end{equation}
where $U_{\delta_0}(t)$ is defined as 	
\begin{equation}\label{Udelta0}
U_{\delta_0}(t) \eqdef \left\{ x \in \Omega; \ |\psi_{base}(x)| \leq \frac{r_2}{r_1} \left( \max_{r\in [r_1+\delta_0,r_2-\delta_0]}|\psi_{base}(r)| \right) + \frac{K}{r_1} e^{Ct} \right\}.
\end{equation} 
Here, the initial parameter $\delta_0$ is the same constant as the one used in the definition of $I_0$ at \eqref{initialsupport}, $C$ is defined as \eqref{Cdef}, and $K$ is defined as \eqref{constant K}.
%
%
%
%
%
%
%
%
%
%
%
%
%
%
%
%
%
%
%
%

%
%
\end{hypothesis}

\begin{remark}\label{psiext remark}
   We remark that  $$\sup_{r\in [r_1,r_1+\delta_0]\cup [r_2-\delta_0,r_2]} |\psi_{ext}(t,r)| \to \infty \text{ as }t\to \infty. $$ However, $|\psi_{ext}(t,r)|$ is finite within any open interval $r\in U\subset (r_1+\delta_0,r_2-\delta_0)$ for all time $t\ge 0.$
   Moreover, it remains finite within any finite time interval $[0,T]$ for any $T>0. $
\end{remark}
\begin{remark}
For the general form of the external magnetic potential defined in Hypothesis \ref{psibasegeneral}, we can easily observe that  
	\begin{equation}\notag\begin{split}
	L_{bar}(t)=\max_{x\in U_{\delta_0}(t)} |\psi_{base}(x)|
	\le \frac{r_2}{r_1}\max_{r\in [r_1+\delta_0,r_2-\delta_0]}|\psi_{base}(r)|+ \frac{K}{r_1}e^{Ct},\end{split}
	\end{equation} by the definition of the set $U_{\delta_0}(t)$ in  \eqref{Udelta0}.
	Here
	\begin{multline}\label{constant K}
	K\eqdef \frac{\tilde{C}}{2}(r_2+r_m)(r_2-r_1)+(2r_2-r_1)\left(\frac{2\tilde{C}}{C}+M_0\right) 
	+r_2M_0+\frac{\tilde{C}r_m}{C},
	\end{multline}
	where $\delta_0$ is the same constant as the one used in the definition of $I_0$ at \eqref{initialsupport}. The constant $K$ has been determined such that \eqref{Rpsiext diff 3} holds in the arguments of using the characteristic ODEs for the particle trajectories in the proof of Lemma \ref{magconfi}.   Here, $M_0$ is the maximal radius of the initial-momentum-support defined in \eqref{initialmomentumsupport}, $C>0$ and $\tilde{C}$ are defined in \eqref{tildeCdef} and \eqref{Cdef} and depend only on $r_1,r_2,$ $\|B^0\|_{L^\infty({[r_1,r_2] })}$, $\|E_\theta^0\|_{L^\infty({[r_1,r_2] })}$, $\|E_\theta^b\|_{L^\infty([0,t]\times\partial{[r_1,r_2] })}$, $\|p^0f^0\|_{L^1({[r_1,r_2] }\times \rtw)},$ and $\lambda$.
\end{remark}

\begin{remark}\label{remark.explicit}
	For the external potential $\psi_{base}$ which is explicitly defined in \eqref{psibasedef}, we define $L_{bar}(t)$ as 
	\begin{equation}\notag\label{Mtdef1}\begin{split}
	L_{bar}(t)&\eqdef  \psi_{base} \left(r_1+\left(\frac{r_2-r_1}{\pi}\right)\arcsin(C_t)\right)
	= C_t-1,\end{split}
	\end{equation}
	where $C_t$ is defined as 
	$$C_t\eqdef 1+\frac{r_2}{r_1}\left| \csc \left(\frac{\pi}{r_2-r_1}(r_2-r_1-\delta_0)\right)-1\right|+ \frac{K}{r_1}e^{Ct},$$ and $K$ is from \eqref{constant K}. 
\end{remark}

We point out that Remark \ref{remark.explicit} is consistent with the definition \eqref{movingbargeneral def}. Note that the absolute value of the explicit magnetic potential \eqref{psibasedef} gets larger if it is closer to the boundary. So the maximum occurs at $r=r_2-\delta_0$ and we obtain Remark \ref{remark.explicit}.

\begin{remark}\label{catch22}
We would like to provide more details on the size of the finite-in-time external magnetic potential $\psi_{ext}$ with respect to the time variable $t.$  Indeed, we will compute the minimal growth rate of the external potential $\psi_{ext}$ with respect to time that we need for the magnetic confinement in \eqref{e:Upper_Bdd_for_psi(R)}, where the right-hand side of \eqref{e:Upper_Bdd_for_psi(R)} determines the size of the moving bar $L_{bar}(t)$ as in \eqref{movingbargeneral def} and \eqref{Udelta0}.

There are several crucial reasons why we can use a finite external magnetic potential $\psi_{ext}$ to confine the plasmas.  The main observation is that the a priori estimates for the self-consistent electro-magnetic fields $E_r,$ $E_\theta$ and $B$ in Proposition \ref{E1estimate} and Proposition \ref{aprioriestimatesEB} are independent of the external magnetic field $B_{ext}$ (or equivalently, the potential $\psi_{ext}$). As a result, our choice of the finite barrier (i.e., $\psi_{ext}$) will not affect the velocity control \eqref{Vbound} in Lemma \ref{Vbound lemma}, which is a direct consequence of the estimates on $E_r$, $E_{\theta}$ and $B$ indeed. The crucial estimate \eqref{e:Upper_Bdd_for_psi(R)} follows directly from the velocity bound \eqref{Vbound}, and hence it is also independent of the choice of $\psi_{ext}$. In other words, having such an $L^\infty$ velocity control \eqref{Vbound} that is independent of the external magnetic potential $\psi_{ext}$ is the crucial reason why we are able to confine the plasma by using a finite magnetic potential. 
Thus, it is crucial to note that such a circular reasoning or a \textit{catch-22} situation where a stronger $\psi_{ext}$ may also speed up the particles and hence a even stronger $\psi_{ext}$ would be needed to confine the plasma does not appear in the analysis.

The observation in the physical side is also interesting. Physically, the external magnetic field and its potential only affect the plasma uniformly, but will not affect the self-interactions among particles. As a result, the external magnetic field can be used to \textit{move} the particles as in the process of confinement, but it cannot affect the self-consistent electric and magnetic fields in general.
\end{remark}

In Section \ref{confinement}, we will prove that both $\psi_{base}$ and $\psi_{ext}$ can be used as an external magnetic potential of the system so that all the charged particles can be confined globally in time. This will prove Theorem \ref{maintheorem2}.

\begin{remark}
	It turned out that the Vlasov-Maxwell equations in cylindrical coordinates contains additional forcing terms in the equation which are purely formed by the coordinate changes; indeed, those additional terms are related to centrifugal and Coriolis forces, which only appear in the rotating frame. However, these additional terms create extra singularities when we implement the previously existing argument. Due to the extra inhomogeneity from the magnetic field that appears on the right hand side of \eqref{sum1} below, the fields and their derivatives have higher growth and we needed to control the additional growth via considering a sufficiently large but finite-in-time external magnetic potential well. 
\end{remark}
\subsection{Main results}We now state our main theorems. The first theorem that we state is on the global well-posedness on the Cauchy problem to the relativistic \textit{Vlasov-Maxwell} system in an annulus:

\begin{theorem}[Global well-posedness of the Cauchy problem]\label{maintheorem1}
Suppose that Hypothesis \ref{extmagassumption} and the rotational symmetry \eqref{eqrotsym} hold. Define the external magnetic field $B_{ext}=B_{ext}(t,r)$  as \begin{equation}\label{Bextdef}B_{ext}(t,r)\eqdef \frac{1}{r}\frac{\partial(r\psi_{ext}(t,r))}{\partial r}.\end{equation}  For some constants $\delta_0 \in(0,\frac{r_2-r_1}{2})$
 and $M_0>0$, we assume that $f^0\in C^1((r_1,r_2) \times \rtw)$ and $f^0$ is compactly supported in the $r$ and $p$ variables in the sense of \eqref{initialsupport} and \eqref{initialmomentumsupport}.  Suppose that $E^0_\theta$ and $B^0$ are $C^1((r_1,r_2) )$ functions. Then there exists a unique non-negative $C^1([0,\infty)\times (r_1,r_2) \times \rtw)$ solution $f$ to the relativistic \textit{Vlasov-Maxwell} system \eqref{pVlasov} and \eqref{pMaxwell} subject to the initial condition \eqref{initial} and the boundary condition \eqref{boundary}.
\end{theorem}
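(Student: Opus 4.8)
The plan is to follow the Glassey--Strauss strategy for the Vlasov--Maxwell system, adapted to the annular domain and considerably simplified by the axisymmetric reduction, with the global extension driven by the a priori bounds already assembled in the earlier sections. First I would set up an iteration scheme to produce a local solution: starting from the initial data $(f^0, E_\theta^0, B^0, \lambda)$, I alternate between solving the transport equation \eqref{pVlasov} by the method of characteristics --- which keeps each iterate non-negative and $C^1$ whenever the driving fields are $C^1$, and propagates the compact support --- and updating the fields through Maxwell's equations \eqref{pMaxwell}. Under the symmetry \eqref{eqrotsym} this field update is essentially explicit: $E_r$ is recovered from Gauss's law $\eqref{pMaxwell}_1$ together with $\partial_t E_r = -j_r$, while the pair $(E_\theta, B)$, equivalently the Riemann invariants $P_\pm = r(E_\theta \pm B)$, solve the transport equations \eqref{transport} along the characteristics $dr/dt = \pm 1$, with the boundary data supplied by Lemma \ref{Pplus lemma} and the boundary condition \eqref{boundary}.

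The core of the argument is to make these bounds uniform in the iteration index and in time on each interval $[0,T]$. For this I would invoke Proposition \ref{E1estimate} and Proposition \ref{aprioriestimatesEB}, which bound $\|E_r\|_{L^\infty}$, $\|E_\theta\|_{L^\infty}$, and $\|B\|_{L^\infty}$ \emph{independently} of the external field $B_{ext}$. These field bounds feed directly into Lemma \ref{Vbound lemma} to control the momentum support $\sup|p|$ via \eqref{Vbound}, and then into Lemma \ref{magconfi} to confine the spatial support to a compact set $I_\delta \times \rtw$ with $I_\delta \subset (r_1, r_2)$ --- so that the characteristics never reach $r = r_1$ or $r = r_2$. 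Because the momentum stays bounded and the support stays strictly inside the annulus, the singular cylindrical coefficients $1/r$ and the centrifugal/Coriolis terms $p^0\hat{p}_\theta^2/r$, $p^0\hat{p}_r\hat{p}_\theta/r$ in \eqref{pVlasov} remain smooth and bounded along the flow.

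Next I would upgrade to $C^1$ control. Differentiating the characteristic ODEs and the Vlasov equation along the flow yields an estimate for $\nabla_{r,p} f$ in which the derivatives of the fields appear as coefficients; these are in turn controlled by differentiating the transport equations for $P_\pm$ and the relations for $E_r$, whose source terms involve $\nabla_{r,p} f$ through $\rho$, $j_r$, $j_\theta$. A Gronwall argument on the combined quantity closes these estimates, again uniformly on $[0,T]$ precisely because the confinement keeps every coefficient away from its singularity. With uniform $C^1$ bounds in hand, I would show the iteration is a contraction in a suitable norm on a short time interval, giving a local $C^1$ solution, and then continue it: since neither the momentum support blows up nor does the spatial support reach the boundary, the standard continuation criterion forces the solution to persist on all of $[0,\infty)$. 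Uniqueness follows from a Gronwall estimate applied to the difference of two solutions with the same data.

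The main obstacle --- and the conceptual crux flagged in Remark \ref{catch22} --- is precisely that the a priori field estimates must be independent of the growing external potential $\psi_{ext}$; otherwise a stronger confining field would accelerate the particles, demanding a still stronger field, and the bootstrap would never close. Resolving this is exactly the content of Proposition \ref{E1estimate} and Proposition \ref{aprioriestimatesEB}, which decouple the self-consistent field growth from $B_{ext}$. The remaining technical difficulty is the derivative control of $E_\theta$ and $B$ in the presence of the extra inhomogeneous magnetic term on the right-hand side of the $P_\pm$ transport equations together with the singular geometric terms, and this is where the confinement of Lemma \ref{magconfi} is indispensable.
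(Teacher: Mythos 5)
Your overall architecture is the same as the paper's: an iteration scheme whose convergence rests on the a priori bounds of Propositions \ref{E1estimate} and \ref{aprioriestimatesEB} (crucially independent of $B_{ext}$), the momentum bound of Lemma \ref{Vbound lemma}, the confinement of Lemma \ref{magconfi} keeping the characteristics away from $r=r_1,r_2$, derivative estimates to get $C^1$ control, and a Gronwall argument on the difference of two solutions for uniqueness. The paper itself treats the existence iteration as standard and cites the literature, exactly as you do, and your uniqueness and non-negativity arguments match Section \ref{sec:global}.

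There is, however, one concrete gap in your derivative step. You propose to control $\partial_r P_\pm$ by differentiating the transport equations \eqref{transport}, which produces the source term $\partial_r(rj_\theta)=\int_{\rtw}\hat{p}_\theta\,\partial_r(rf)\,dp$, and then to close a Gronwall inequality ``on the combined quantity'' $\|\nabla_{r,p}f\|_{L^\infty}+\|\partial_r(E_\theta,B)\|_{L^\infty}$. But the resulting system is quadratic: schematically $\|\partial_r(E_\theta,B)(t)\|\lesssim 1+\int_0^t\|\partial_r f\|$ while $\|\nabla_{r,p}f(t)\|\lesssim 1+\int_0^t\bigl(1+\|\partial_r(E_\theta,B)\|\bigr)\|\nabla_{r,p}f\|$, so the combined quantity obeys a Riccati-type inequality $X(t)\lesssim 1+\int_0^t X+\int_0^t X^2$ whose comparison solution blows up in finite time. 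This yields only a short-time $C^1$ bound and does not give the continuation to all of $[0,T]$ that global well-posedness requires. The paper avoids this by the Glassey--Schaeffer operator splitting \eqref{splittingop}, $\partial_r=(T_+-S)/(1-\hat{p}_r)$: the $T_+$ part is a perfect derivative along the light-cone characteristic and integrates to boundary terms, while $Sf$ is replaced via the Vlasov equation and integrated by parts in $p$, so that $\|\partial_r P_\pm\|_{L^\infty}$ is bounded by $C_T$ in terms of $\|f\|_{L^\infty}$, the momentum support of Lemma \ref{momsupp}, and the data alone --- with no derivatives of $f$ appearing. Only after the field derivatives are bounded this way does the Gronwall inequality for $D(f)=\|\partial_r f\|_{L^\infty}+\|\nabla_p f\|_{L^\infty}$ become linear and close on every $[0,T]$. (The lower bound $1-\hat{p}_r\geq c>0$ needed for the splitting is exactly where the compact momentum support enters.) Your proposal needs this device, or an equivalent one, to be complete.
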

Additionally, we introduce our main theorem on the confinement of the plasma by a finite external magnetic field at the boundary:

\begin{theorem}[Global confinement of the plasma]\label{maintheorem2} Let $r_m\eqdef \frac{r_1+r_2}{2}.$ Suppose that the support condition \eqref{initialsupport} and \eqref{initialmomentumsupport} for the initial condition holds for some constants $\delta_0\in(0,\frac{r_2-r_1}{2})$ and $M_0>0$.  Suppose that Hypothesis \ref{extmagassumption} and the rotational symmetry \eqref{eqrotsym} hold for a given $\delta\in(0,\delta_0).$ Define the external magnetic field $B_{ext}=B_{ext}(t,r)$  as in \eqref{Bextdef}. Then the unique $C^1$ solution $f(t,r,p_r,p_\theta)$ obtained in Theorem \ref{maintheorem1} satisfies 
$$\mathrm{dist}(\mathrm{supp}_{x} (f)(t), \ \partial \Omega)>\delta>0,$$
for any $t\in [0,\infty)$, where $\mathrm{supp}_{x}(f)(t)$ is defined as
$$\mathrm{supp}_{x}(f)(t)\eqdef \{x\in \Omega\  |\  f(t,r,p_r,p_\theta)\neq 0,  \text{ for some } (p_r,p_\theta)\in\rtw, \text{ where }r\eqdef |x| \}.$$
\end{theorem}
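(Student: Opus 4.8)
The plan is to prove the confinement by the method of characteristics, reducing the statement about $\mathrm{supp}_x(f)$ to a uniform-in-time bound on the radial coordinate along particle trajectories. Since \eqref{pVlasov} is a linear transport equation in $f$, the distribution is constant along the characteristic curves $(R(t),\Theta(t),P_r(t),P_\theta(t))$ solving $\dot R=\hat P_r$, $\dot P_r=E_r+\hat P_\theta\bar B+p^0\hat P_\theta^2/R$ and $\dot P_\theta=E_\theta-\hat P_r\bar B-p^0\hat P_r\hat P_\theta/R$ (the $\Theta$ equation decouples by axisymmetry). Consequently $f(t,r,p_r,p_\theta)\neq 0$ only if the backward trajectory through $(r,p_r,p_\theta)$ meets $\mathrm{supp}(f^0)$, so $\mathrm{supp}_x(f)(t)$ is precisely the set of radii $R(t)$ reached by forward trajectories emanating from the initial support \eqref{initialsupport}, namely those with $R(0)\in[r_1+\delta_0,r_2-\delta_0]$ and $|(P_r,P_\theta)(0)|\le M_0$. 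It therefore suffices to show that every such trajectory satisfies $r_1+\delta<R(t)<r_2-\delta$ for all $t\ge 0$.

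The first substantive step is an a priori velocity bound that does not see the external potential. Because the magnetic force and the centrifugal/Coriolis terms are orthogonal to the velocity, a direct computation along the flow gives $\tfrac{d}{dt}p^0=\hat P_r E_r+\hat P_\theta E_\theta$, so the momentum magnitude is driven only by the self-consistent electric field. Feeding in the a priori estimates for $\|E_r\|_{L^\infty}$, $\|E_\theta\|_{L^\infty}$, $\|B\|_{L^\infty}$ from Propositions \ref{E1estimate} and \ref{aprioriestimatesEB} --- which by Remark \ref{catch22} are independent of $B_{ext}$ --- a Gr\"onwall argument yields the velocity control \eqref{Vbound} of Lemma \ref{Vbound lemma}, bounding $p^0(t)$ by a constant multiple of $e^{Ct}$ with $C$ as in \eqref{Cdef}. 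This is exactly the step that removes the apparent circularity of Remark \ref{catch22}: strengthening $\psi_{ext}$ does not enlarge this bound.

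The core of the proof is to convert the velocity bound into a bound on how close a trajectory can approach $\partial\Omega$, using the structure of $B_{ext}=\tfrac1r\partial_r(r\psi_{ext})$ in \eqref{Bextdef}, under which $\psi_{ext}$ plays the role of the $\theta$-component of the magnetic vector potential. The natural controlled quantity is the kinetic angular momentum corrected by flux, $q(t)\eqdef R(t)P_\theta(t)+R(t)\psi_{ext}(t,R(t))$: differentiating along the flow, the confining force cancels and leaves $\dot q=R\big(E_\theta-\hat P_r B+\partial_t\psi_{ext}\big)$, which involves only the self-fields together with the explicit time derivative of $\psi_{ext}$. I would then run a continuity argument: as long as the trajectory remains in the region where $\psi_{ext}=\psi_{base}$ (the uncapped branch of \eqref{finitepsiext}), the term $\partial_t\psi_{ext}$ vanishes along the flow, so $q$ is controlled purely by the self-fields and the velocity bound. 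Since $R\ge r_1$, this produces an upper bound of the form \eqref{e:Upper_Bdd_for_psi(R)} for $|\psi_{ext}(t,R(t))|=|\psi_{base}(R(t))|$ in terms of the finite, $\psi_{ext}$-independent field and momentum bounds. Because $\psi_{base}$ blows up as $r\to(r_1+\delta)^+$ and $r\to(r_2-\delta)^-$ by Hypothesis \ref{psibasegeneral}, such a finite bound forces $R(t)$ to remain in the compact sublevel set $U_{\delta_0}(t)\subset\Omega$ of \eqref{Udelta0}, which lies at distance strictly greater than $\delta$ from $\partial\Omega$; the constant $K$ in \eqref{constant K} and the factor $e^{Ct}$ in \eqref{movingbargeneral def}--\eqref{Udelta0} are calibrated exactly so that this bound never exceeds $L_{bar}(t)$, which both closes the continuity argument and keeps the trajectory in the uncapped region for all time.

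The main obstacle I anticipate is the self-consistency of the time-dependent cap: $L_{bar}(t)$ must grow quickly enough to dominate the exponentially growing self-consistent fields, yet stay finite on each interval $[0,T]$, and its growth rate is compared against the very field bounds that feed into it. Making the continuity argument rigorous --- verifying that the choice \eqref{movingbargeneral def}--\eqref{Udelta0} of $L_{bar}(t)$ dominates the right-hand side of the integrated identity for $q$ for all $t$, so that the trajectory never leaves the region where $\partial_t\psi_{ext}=0$ and the estimate stays valid --- is the delicate point, and is precisely where the explicit form of $K$ and the exponential $e^{Ct}$ enter (this is the content I expect in Lemma \ref{magconfi}). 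Once the uniform trajectory bound $r_1+\delta<R(t)<r_2-\delta$ is established, the reduction of the first paragraph immediately yields $\mathrm{dist}(\mathrm{supp}_x(f)(t),\partial\Omega)>\delta$ for every $t\ge 0$, as claimed.
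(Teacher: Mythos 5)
Your proposal is correct and follows essentially the same route as the paper: reduction to characteristics, the velocity bound of Lemma \ref{Vbound lemma} (independent of $B_{ext}$), the conserved canonical angular momentum forcing the bound \eqref{e:Upper_Bdd_for_psi(R)} on $\psi_{base}(R(s))$, and the moving-bar truncation argument of Section \ref{sec.truncated}. The only (harmless) deviation is bookkeeping: you keep only $R\psi_{ext}$ in the controlled quantity $q$ and bound the leftover self-field term $-R\hat{P}_rB$ directly by $\|B\|_{L^\infty}\le\tilde{C}e^{Ct}$, whereas the paper also absorbs the self-consistent potential $\psi$ into the invariant and uses Faraday's law to reduce $R\partial_t\psi+RE_\theta$ to the single boundary term $r_mE_\theta(\tau,r_m)$, which is what produces the specific constant $K$ in \eqref{constant K}.
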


\begin{remark}\label{rem:ConfinementDistance}
	We remark that, by choosing \Blue{$\psi_{base}$ and } $\psi_{ext}$ appropriately as in Hypotheses \ref{psibasegeneral} and \ref{extmagassumption}, we are indeed able to confine the plasma in a given compact set $$\{x\in \Omega \ |\   r_1+\delta\le |x|\le r_2-\delta\},$$ for any given time $t\in [0,\infty)$ as we have $$\mathrm{dist}(U_{\delta_0}(t),\partial\Omega)>\delta >0,$$ by Hypothesis \ref{psibasegeneral} where $U_{\delta_0}(t)$ is defined as in \eqref{Udelta0}.
	This set contains the initial spatial support.
\end{remark}

\begin{remark}
In this paper, we implement the magnetic confinement in a compact set using the full relativistic Vlasov-Maxwell system. We use a finite external potential to confine the plasma and can control the size of the spatial support of the plasma as we desire as long as it includes the initial spatial support.

    There are many results on the Vlasov-Poisson system with given external magnetic field (both stationary and nonstationary, up to the full 3-dimensional) such as \cite{MR2997588,MR3723427,MR3318184,MR3148082,MR3568167,MR3485969,MR2733252,MR3709891,Belyaeva_2021}.     However, there are very few results regarding magnetic confinement for the relativistic Vlasov-Maxwell system. Including ours, there are only three results to the best of our knowledge. Others are \cite{MR3294216,MR3782409}. 
Though the work \cite{MR3294216,MR3404774} considers the confinement in the 1.5-dimensional domain with $x_1\in [0,1]$ and $v \in \mathbb{R}^2$, their confinement is indeed in a 2-dimensional infinite strip with the symmetry in $x_2$ variable which is not compact.

The implementation of the magnetic confinement in a compact domain is relevant to heating in tokamaks \cite{cottrell1988superthermal,goede1976ion},
magnetic mirror-confined plasma \cite{crawford1965review,hubbard1978electrostatic,perraut1982systematic,post1966electrostatic}, and electron cyclotron resonance heating \cite{guest2009electron}.
\end{remark}

\subsection{Organization of the paper}
In order to prove our main theorems, we first need to obtain a priori $L^\infty$ estimates for the self-consistent fields and the particle distribution to the system. We first obtain the estimates for the fields in Section~\ref{sec:fieldest} via applying the method of characteristics to the wave equations. Then, based on the field-estimates, we start proving our main theorem on the magnetic confinement, Theorem~\ref{maintheorem2}, in Section~\ref{confinement}. In order to prove the global existence and the uniqueness of a classical solution in Section~\ref{sec:global}, we make several estimates in Section~\ref{Linftysection} and Section~\ref{sec:derivatives} on the electro-magnetic fields and the distribution. In Section~\ref{Linftysection}, we also obtain the $L^\infty$-moment propagation of the solution, and the $L^\infty$ estimates for the macroscopic mass density and the current density. Since we are interested in constructing $C^1$ solutions to the system, we also need to obtain the $L^\infty$ estimates of the first-order derivatives of the fields and the distribution. This is done in Section~\ref{sec:derivatives}. Finally, we use the a priori estimates and the iteration argument to prove the existence, the uniqueness, and the non-negativity of a global $C^1$ solution to the \textit{Vlasov-Maxwell} system in Section~\ref{sec:global}.

\section{A priori estimates for the self-consistent fields} \label{sec:fieldest}

In the forthcoming sections we will obtain some uniform a priori estimates for $(f,E_r,E_\theta,B)$. Consider $C^1$ solutions $(f,E_r,E_\theta,B)$ to \eqref{pVlasov}-\eqref{boundary} on a finite time interval $[0,T]$. We a priori let the particles be confined as in Theorem \ref{maintheorem2} throughout this section.
\subsection{Estimates of the field \texorpdfstring{$E_r$}{}}
We start with introducing the upper bound for $E_r$ in this section.
  More precisely, we have the following proposition:
  \begin{proposition}\label{E1estimate}
  	We have $$\|E_r\|_{L^\infty([0,T]\times [r_1,r_2] )}\leq \|f^0\|_{L^1([r_1,r_2] \times \rtw)}+\lambda.$$
  \end{proposition}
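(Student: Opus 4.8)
The plan is to use the rotational symmetry \eqref{eqrotsym} to reduce Gauss's law (the first equation of \eqref{pMaxwell}) to a radial ODE and integrate it. Since every $\partial_\theta$ term vanishes under \eqref{eqrotsym}, Gauss's law collapses to $\partial_r(rE_r)=r\rho$. Integrating in $r$ from the inner boundary $r_1$ to an arbitrary $r\in[r_1,r_2]$ gives the exact representation
\begin{equation*}
rE_r(t,r)=r_1E_r(t,r_1)+\int_{r_1}^r s\,\rho(t,s)\,ds,
\end{equation*}
which expresses the field at any radius through its inner-boundary value and the charge enclosed between $r_1$ and $r$.

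The next step is to identify the boundary value $E_r(t,r_1)$. The initial data \eqref{initial} fix $E_r(0,r_1)=\lambda$, so it is enough to show this value is stationary. Under \eqref{eqrotsym} the second equation of \eqref{pMaxwell} reduces to $\partial_tE_r=-j_r$. Because we work a priori inside the confinement region of Theorem \ref{maintheorem2}, the distribution $f$ vanishes in a neighborhood of $\partial\Omega$; in particular $f(t,r_1,\cdot)\equiv0$, whence $j_r(t,r_1)=\int_{\rtw}\hat p_r f\,dp=0$ and so $\partial_tE_r(t,r_1)=0$. Thus $E_r(t,r_1)=\lambda$ for all $t\in[0,T]$, and the representation becomes $rE_r(t,r)=r_1\lambda+\int_{r_1}^r s\rho(t,s)\,ds$.

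I would then convert this into the $L^\infty$ bound. Dividing by $r$ and using $f\ge0$ (so $\rho\ge0$), together with the elementary weights $r_1/r\le1$ and $s/r\le1$ valid for $r_1\le s\le r$, gives
\begin{equation*}
|E_r(t,r)|\le|\lambda|+\int_{r_1}^r\rho(t,s)\,ds\le|\lambda|+\int_{r_1}^{r_2}\int_{\rtw}f(t,s,p)\,dp\,ds.
\end{equation*}
Taking the supremum over $r$ and $t$ then reduces the claim to controlling the total mass $\int_{r_1}^{r_2}\int_{\rtw}f(t,s,p)\,dp\,ds$ by its initial value $\|f^0\|_{L^1([r_1,r_2]\times\rtw)}$, the additive $\lambda$ in the statement being exactly the inner-boundary value found above.

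The step I expect to require the most care is this final mass control, because the Vlasov flow in cylindrical coordinates does not preserve the flat measure $dr\,dp$: the transport field in \eqref{pVlasov} has phase-space divergence $-\hat p_r/r$, so the genuinely conserved quantity is the weighted mass $\int_{r_1}^{r_2}\int_{\rtw} r f\,dr\,dp$ rather than the unweighted one appearing above. One therefore has to reconcile the two, either by carrying the radial weight through the $s/r\le1$ bookkeeping or by establishing the required uniform-in-time bound on $\int f\,dp\,dr$ directly; in both routes the confinement of Theorem \ref{maintheorem2} is essential, since it forces the boundary flux in the mass balance to vanish and simultaneously pins the inner-boundary value $E_r(t,r_1)=\lambda$.
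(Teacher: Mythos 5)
Your proposal is correct and follows essentially the same route as the paper: reduce Gauss's law to $\partial_r(rE_r)=r\rho$ and integrate from $r_1$, pin the inner-boundary value $E_r(t,r_1)=\lambda$ via Amp\`ere's law $\partial_tE_r=-j_r$ together with $j_r(t,r_1)=0$ from confinement, and close the estimate with conservation of total charge obtained by integrating the continuity equation $\partial_t\rho+\frac{1}{r}\partial_r(rj_r)=0$ against $r\,dr\,dt$ with vanishing boundary flux. The only difference is that the paper establishes the charge conservation first and you do it last; your closing remark about the weighted measure $r\,dr\,dp$ versus $dr\,dp$ is a fair point of care that the paper's own write-up passes over silently.
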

\begin{proof}  
By integrating the continuity equation
\begin{equation}\label{continuity} 			
	\partial_t\rho +\frac{1}{r}\partial_r(rj_r)= 0 
\end{equation}with respect to $r\ dr  dt$ and using that $j_r=0$ on the boundaries, we have the conservation of total charge: for any $t\in [0,T]$,
$$\int_{r_1}^{r_2 }  \rho(t,r)rdr  = \int_{r_1}^{r_2 }  \rho(0,r)rdr  
 = \|f^0\|_{L^1({[r_1,r_2] }\times \rtw)}.$$
On the other hand, it follows from Gauss's law $\eqref{pMaxwell}_1$ that we also have $$\partial_r (rE_r)=r\rho,$$ and hence, for any $R\in [r_1, r_2]$,
$$RE_r(t,R)-r_1E_r(t,r_1)=\int_{r_1}^Rr\rho(t,r) dr.$$
  It follows from Amp\`{e}re's circuital law $\eqref{pMaxwell}_2$ that $\partial_t E_r=-j_r$, so we further have
   $$E_r(t,r_1)=E_r(0,r_1)-\int_0^t j_r|_{r=r_1} d\tau =\lambda -\int_0^t j_r(\tau, r_1)d\tau=\lambda,$$ since $j_r|_{r=r_1} \equiv 0$. Therefore, we finally obtain
   $$E_r(t,R)=\frac{1}{R}\int_{r_1}^Rr\rho dr +\frac{r_1}{R}\lambda,$$ for all $r_1\leq R\leq r_2$.
   This implies that
   $$\|E_r\|_{L^\infty([0,T]\times {[r_1,r_2] })}\leq \|\rho^0\|_{L^1({[r_1,r_2] })}+\lambda=\|f^0\|_{L^1({[r_1,r_2] }\times \rtw)}+\lambda.$$
   This completes the proof.
   \end{proof}
   
\subsection{Estimates of the fields \texorpdfstring{$E_\theta$}{} and \texorpdfstring{$B$}{}}\label{estiEthetaB}
  In this section, we use the method of  characteristics to estimate the fields $E_\theta$ and $B$. First of all, we consider the third and the fourth equations of \eqref{pMaxwell}. We multiply the third and the fourth equations by $r$ and obtain
   \begin{equation}
   \label{sum1}\partial_t (rE_\theta)+\partial_r(rB)=B-rj_\theta,
   \end{equation}
   and 
   \begin{equation}
   \label{sum2}
   \partial_t(rB)+\partial_r(rE_\theta)=0.
   \end{equation}
Hence, it follows from direct addition and subtraction that
  \begin{equation}\label{transport}
  \partial_t(rE_\theta\pm rB)\pm\partial_r(rE_\theta\pm rB)=B-rj_\theta.
  \end{equation}
  
  Define $P_{\pm}\eqdef rE_\theta\pm rB$ and fix $t\in (0,T]$ and $r\in {[r_1,r_2] }$. We will use the fact that the solutions of the transport equations \eqref{transport} at $(t,r)$ are affected only by the values inside the characteristic cone.
Therefore, we have
\begin{equation}\label{P+}
	P_+(t,r)=P_+(t_1(t,r),r-t+t_1(t,r))+\int_{t_1(t,r)}^t(B-rj_\theta)(\tau,r-t+\tau)d\tau,
\end{equation}
and
\begin{equation}\label{P-}
	P_-(t,r)=P_-(t_2(t,r),r+t-t_2(t,r))+\int_{t_2(t,r)}^t(B-rj_\theta)(\tau,r+t-\tau)d\tau,
\end{equation}where we define $t_1=t_1(t,r)\eqdef \max\{0,t-r+r_1\}$ and $t_2=t_2(t,r)\eqdef \max\{0,t-r_2+r\}.$
Then, it follows from direct addition and subtraction that 
\begin{multline}\label{e:Char_Formula_for_E_theta}
 (rE_\theta)(t,r)=\frac{1}{2}\left(P_+(t_1(t,r),r-t+t_1(t,r))+P_-(t_2(t,r),r+t-t_2(t,r))\right)\\+\frac{1}{2}\int_{t_1(t,r)}^t(B-rj_\theta)(\tau,r-t+\tau)d\tau+\frac{1}{2}\int_{t_2(t,r)}^t\left[(B-rj_\theta)(\tau,r+t-\tau)\right]d\tau,
\end{multline} 
 and
\begin{multline}\label{e:Char_Formula_for_B}
 (rB)(t,r)=\frac{1}{2}\left(P_+(t_1(t,r),r-t+t_1(t,r))-P_-(t_2(t,r),r+t-t_2(t,r))\right)\\+\frac{1}{2}\int_{t_1(t,r)}^t(B-rj_\theta)(\tau,r-t+\tau)d\tau-\frac{1}{2}\int_{t_2(t,r)}^t\left[(B-rj_\theta)(\tau,r+t-\tau)\right]d\tau.
\end{multline}
Therefore, we need to estimate the upper-bounds of the following two integrals:\begin{equation}\label{int.jtheta}\int_{t_1(t,r)}^t(rj_\theta)(\tau,r-t+\tau)d\tau\text{ and }\int_{t_2(t,r)}^t(rj_\theta)(\tau,r+t-\tau)d\tau.\end{equation}

We are now ready to state our main lemma of this section. The following lemma is on the upper-bounds of the sum of the two integrals of our interest from the argument above. We will use this upper-bound estimate to bound our fields $E_\theta$ and $B$ later in this section. 
 \begin{lemma}\label{jtheta}
 Let $t\in (0,T]$. Suppose that $$\lim\limits_{|p|\to\infty}f=0.$$ Then if $r<\frac{r_1+r_2}{2}$, we have 
 \begin{multline}\label{2.10}\int_{t_1(t,r)}^t(r|j_\theta|)(\tau,r-t+\tau)d\tau+
 \int_{t_2(t,r)}^t(r|j_\theta|)(\tau,r+t-\tau)d\tau\\\leq \int_{r_1}^{r_2 }  r'e(t_2(t,r),r')dr' +\int_{t_2(t,r)}^{t_1(t,r)}r_1(E^b_\theta B^b)(\tau,r_1)d\tau,
 \end{multline}where $$e(t,r)\eqdef \frac{1}{2}(|E(t,r)|^2+B^2(t,r))+\int_\rtw p^0f(t,r,p_r,p_\theta)dp_rdp_\theta.$$ On the other hand, if $r\ge \frac{r_1+r_2}{2}$, then we instead have
 \begin{multline}\label{2.11}\int_{t_1(t,r)}^t(r|j_\theta|)(\tau,r-t+\tau)d\tau+
 \int_{t_2(t,r)}^t(r|j_\theta|)(\tau,r+t-\tau)d\tau\\\leq \int_{r_1}^{r_2 }  r'e(t_1(t,r),r')dr' +\int_{t_2(t,r)}^{t_1(t,r)}r_2(E^b_\theta B^b)(\tau,r_2)d\tau.
 \end{multline}
 \end{lemma}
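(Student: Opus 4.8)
The plan is to recognize the two combinations $e-S_r$ and $e+S_r$ as the nonnegative null fluxes of a local energy identity, and then to integrate that identity over the backward domain of dependence of the point $(t,r)$ by Green's theorem in the $(\tau,r')$-plane; the characteristic edges will reproduce exactly the two integrals on the left of \eqref{2.10}, while the bottom edge and the inner-wall edge will produce the two terms on the right. First I would derive the local energy balance. In Cartesian form, multiplying \eqref{Vlasov} by $p^0$ and integrating in $p$ (the integration by parts in $p$ is legitimate by the decay hypothesis $\lim_{|p|\to\infty}f=0$ together with the compact momentum support propagated by the flow) gives $\partial_t\int_{\rtw}p^0 f\,dp+\nabla_x\cdot\int_{\rtw}p f\,dp=\vec E\cdot\vec j$, the magnetic force contributing nothing since it is $p$-divergence-free and does no work. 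Adding the Poynting identity $\partial_t\tfrac12(|E|^2+B^2)+\nabla_x\cdot(E_2B,-E_1B)=-\vec E\cdot\vec j$ obtained from \eqref{Maxwell}, the sources cancel and I obtain $\partial_t e+\nabla_x\cdot\vec S=0$ with $\vec S=\int_{\rtw}p f\,dp+(E_2B,-E_1B)$. Under the rotational symmetry \eqref{eqrotsym} the radial component of $\vec S$ is $S_r\eqdef\int_{\rtw}p_r f\,dp+E_\theta B$, and the balance collapses to the one-dimensional conservation law
\[
\partial_t(re)+\partial_r(rS_r)=0 .
\]

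Second, I would record the pointwise inequalities linking $r|j_\theta|$ to the null fluxes. Completing the square,
\[
e\mp S_r=\tfrac12 E_r^2+\tfrac12(E_\theta\mp B)^2+\int_{\rtw}(p^0\mp p_r)f\,dp\ \geq\ \int_{\rtw}(p^0\mp p_r)f\,dp .
\]
The elementary bound $|\hat p_\theta|=|p_\theta|/p^0\le p^0-|p_r|\le p^0\mp p_r$ (which, after clearing denominators and using $p^0\ge|p_r|$, reduces to $(1-|p_\theta|)^2\ge 0$) then yields $|j_\theta|\le\int_{\rtw}|\hat p_\theta|f\,dp\le e\mp S_r$, and hence $r|j_\theta|\le r(e-S_r)$ as well as $r|j_\theta|\le r(e+S_r)$.

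Third, I would integrate $\partial_\tau(r'e)+\partial_{r'}(r'S_{r'})=0$ over the region $\mathcal D$ given by the backward light cone from $(t,r)$, truncated by the annulus walls and by the line $\{\tau=t_2\}$. For $r<\frac{r_1+r_2}{2}$ one has $t_2\le t_1$, and $\mathcal D$ is bounded by the $-$characteristic from $(t,r)$ to $(t_2,r_2)$, the bottom segment $\{\tau=t_2,\ r_1\le r'\le r_2\}$, the inner wall $\{r'=r_1,\ t_2\le\tau\le t_1\}$, and the $+$characteristic from $(t_1,r_1)$ to $(t,r)$. Green's theorem turns the vanishing area integral into a sum of boundary fluxes: since $dr'=d\tau$ on the inner-moving ($+$) characteristic and $dr'=-d\tau$ on the outer-moving ($-$) one, these edges contribute precisely $\int_{t_1}^{t}r'(e-S_{r'})\,d\tau$ and $\int_{t_2}^{t}r'(e+S_{r'})\,d\tau$; the bottom edge contributes $\int_{r_1}^{r_2}r'e(t_2,r')\,dr'$, and the wall edge contributes $\int_{t_2}^{t_1}r_1 S_{r_1}\,d\tau$. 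On the inner wall the confinement of Theorem \ref{maintheorem2} forces $f\equiv 0$, so $\int_{\rtw}p_r f\,dp=0$ there and $S_{r_1}$ reduces to the boundary value $(E_\theta^b B^b)(\tau,r_1)$ through \eqref{boundary}. Rearranging and then bounding the two null fluxes below by $r|j_\theta|$ via the second step gives exactly \eqref{2.10}; the case $r\ge\frac{r_1+r_2}{2}$ is symmetric, with the roles of $t_1,t_2$ and of the inner and outer walls interchanged, yielding \eqref{2.11}.

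The main obstacle I anticipate is bookkeeping rather than any conceptual difficulty: fixing the orientations and outward normals in Green's theorem on the two null edges so that the signs emerge as $e-S_{r'}$ on the inner-moving characteristic and $e+S_{r'}$ on the outer-moving one, and handling the degenerate cases $t_1=0$ or $t_2=0$, where a characteristic meets the initial slice before reaching a wall. In those cases the true bottom edge spans only a sub-interval of $[r_1,r_2]$, but since $e\ge 0$ one may freely enlarge it back to all of $[r_1,r_2]$; it is precisely the inequality in the statement, rather than an exact identity, that absorbs this slack, and the nonnegativity of the null fluxes is what renders every discarded boundary contribution harmless.
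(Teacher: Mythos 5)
Your proposal is correct and follows essentially the same route as the paper: the same local energy identity $\partial_t(re)+\partial_r(rm)=0$ (the paper's Lemma \ref{dtedrm}, with your $S_r$ equal to the paper's $m$), the same completion-of-squares bound $re\pm rm\geq r\int|\hat p_\theta|f\,dp\geq r|j_\theta|$, and the same application of Green's theorem over the truncated backward light cone $\Delta_1\cup\Delta_2$ with the wall flux reduced to $(E_\theta^b B^b)$ via the vanishing of $f$ at the boundary. The only cosmetic difference is that you derive the energy balance in Cartesian coordinates and then restrict by symmetry, whereas the paper computes it directly from \eqref{pVlasov} and \eqref{pMaxwell} in cylindrical form.
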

 The proof for Lemma \ref{jtheta} heavily relies on the following identity. The identity \eqref{dtedrmeq} that we will introduce in the following lemma is the energy identity and this is related to Poynting's theorem.
 \begin{lemma}\label{dtedrm}
 Define $$e(t,r)\eqdef \frac{1}{2}(|E(t,r)|^2+B^2(t,r))+\int_\rtw p^0f(t,r,p_r,p_\theta)dp_rdp_\theta$$ and $$m(t,r)\eqdef \int_\rtw p_rf(t,r,p_r,p_\theta)dp_rdp_\theta+(E_\theta B)(t,r).$$ Suppose that $$\lim\limits_{|p|\to\infty}f=0.$$Then we have \begin{equation}\label{dtedrmeq}
 \partial_t e+\frac{1}{r}\partial_r(rm)= 0.
 \end{equation}
 \end{lemma}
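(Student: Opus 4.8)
The plan is to recognize \eqref{dtedrmeq} as Poynting's theorem written in cylindrical coordinates under the axisymmetry \eqref{eqrotsym}, and to prove it by splitting the energy density $e$ into its electromagnetic part $\tfrac12(|E|^2+B^2)$ and its kinetic part $\int_\rtw p^0 f\,dp_rdp_\theta$. I would differentiate each part in $t$ separately and then show that the two \emph{work} terms $E_r j_r + E_\theta j_\theta$ produced respectively by the fields and by the particles cancel exactly, leaving only a radial divergence of the flux $m$.

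First I would treat the field part. Under the axisymmetry \eqref{eqrotsym} the $\partial_\theta$ terms in \eqref{pMaxwell} drop out, so that $\partial_t E_r=-j_r$, $\partial_t E_\theta=-\partial_r B-j_\theta$, and $\partial_t B=-\frac1r\partial_r(rE_\theta)$. Differentiating gives
\[
\partial_t\tfrac12(E_r^2+E_\theta^2+B^2) = -E_r j_r - E_\theta j_\theta - E_\theta\partial_r B - B\partial_r E_\theta - \frac{E_\theta B}{r}.
\]
The key algebraic observation is that $-E_\theta\partial_r B - B\partial_r E_\theta = -\partial_r(E_\theta B)$, which combines with the last term into $-\frac1r\partial_r(rE_\theta B)$. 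Thus the field energy contributes $-E_r j_r - E_\theta j_\theta - \frac1r\partial_r(rE_\theta B)$.

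Next I would treat the kinetic part by multiplying the axisymmetric Vlasov equation \eqref{pVlasov} (again the $\partial_\theta f$ term vanishes) by $p^0$ and integrating in $p$. The transport term becomes $-\partial_r\int_\rtw p_r f\,dp_rdp_\theta$, since $p^0\hat{p}_r=p_r$ is $r$-independent. For the two momentum-acceleration terms I would integrate by parts in $p_r$ and $p_\theta$, the boundary terms vanishing by the decay hypothesis $\lim_{|p|\to\infty}f=0$. Writing $a_r$ and $a_\theta$ for the two acceleration coefficients in \eqref{pVlasov}, the crucial simplification is that $p^0 a_r = p^0 E_r + p_\theta\bar{B} + \frac{p_\theta^2}{r}$ and $p^0 a_\theta = p^0 E_\theta - p_r\bar{B} - \frac{p_r p_\theta}{r}$, whence $\partial_{p_r}(p^0 a_r)=\hat{p}_r E_r$ and $\partial_{p_\theta}(p^0 a_\theta)=\hat{p}_\theta E_\theta - \frac{p_r}{r}$. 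Integrating these against $f$ yields exactly $E_r j_r + E_\theta j_\theta - \frac1r\int_\rtw p_r f\,dp_rdp_\theta$.

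Finally I would add the two contributions. The work terms $E_r j_r + E_\theta j_\theta$ cancel, leaving $\partial_t e = -\frac1r\partial_r(rE_\theta B) - \partial_r\int_\rtw p_r f\,dp_rdp_\theta - \frac1r\int_\rtw p_r f\,dp_rdp_\theta$; since $\partial_r g + \frac{g}{r}=\frac1r\partial_r(rg)$, this collapses to $-\frac1r\partial_r(rm)$, which is \eqref{dtedrmeq}. I expect the main obstacle to be the momentum integration by parts: one must verify that the magnetic terms $\hat{p}_\theta\bar{B}$ and $-\hat{p}_r\bar{B}$ together with the centrifugal and Coriolis terms contribute nothing after differentiation (the magnetic force does no work, and the inertial terms leave only the geometric factor $-\frac{p_r}{r}$), so that only the electric work survives and matches the field contribution. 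This precise cancellation is exactly what makes $e$ and $m$ obey a clean conservation law.
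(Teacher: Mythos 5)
Your proposal is correct and follows essentially the same route as the paper: both use the axisymmetric Maxwell equations to reduce the field-energy derivative to $-E_rj_r-E_\theta j_\theta-\frac1r\partial_r(rE_\theta B)$, and both multiply the Vlasov equation by $p^0$, integrate by parts in $p$ (boundary terms vanishing by the decay hypothesis), and observe that the magnetic and inertial forces contribute only the geometric term $-\frac{p_r}{r}$ while the electric work terms cancel. The only difference is organizational — you split $\partial_t e$ into field and kinetic parts before adding, whereas the paper computes $\partial_t e$ and $\frac1r\partial_r(rm)$ separately and sums — but the underlying cancellations are identical.
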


 \begin{proof}[Proof of Lemma \ref{dtedrm}]
 It follows from Maxwell's equations \eqref{pMaxwell} that $$
 	\partial_t e=\int_\rtw p^0(\partial_t f)dp_rdp_\theta -\int_\rtw (\hat{p}_r,\hat{p}_\theta )\cdot (E_r,E_\theta)f dp_rdp_\theta-E_\theta (\partial_r B)-\frac{B}{r}\partial_r (rE_\theta),
 $$
 and
 $$ \frac{1}{r}\partial_r(rm)=\frac{1}{r}\int_\rtw p_r fdp_rdp_\theta+\int_\rtw p_r\partial_r f dp_rdp_\theta+\frac{B}{r}\partial_r(rE_\theta)+E_\theta (\partial_rB).$$
 Therefore, we have
 \begin{multline}\label{continuity identity}
 	\partial_t e+ \frac{1}{r}\partial_r(rm)=\int_\rtw p^0(\partial_t f)dp_rdp_\theta +\int_\rtw p_r\partial_r f dp_rdp_\theta\\-\int_\rtw (\hat{p}_r,\hat{p}_\theta )\cdot (E_r,E_\theta)f dp_rdp_\theta+\frac{1}{r}\int_\rtw p_r fdp_rdp_\theta.
 \end{multline}
 By \eqref{pVlasov}, we further have
 \begin{equation}\label{vlasovuse}
 p^0(\partial_t f +\hat{p}_r\partial_rf)= -p^0\left(E_r+\hat{p}_\theta \bar{B}+\frac{p^0\hat{p}_\theta^2}{r}, E_\theta -\hat{p}_r\bar{B}-\frac{p^0\hat{p}_r\hat{p}_\theta}{r}\right)\cdot (\partial_{p_r}f,\partial_{p_\theta} f).
 \end{equation}
 Note that 
 \begin{multline*}
 	-\int_\rtw  p^0(E_r, E_\theta)\cdot (\partial_{p_r}f,\partial_{p_\theta} f)dp_rdp_\theta\\
 	=\int_\rtw  \{\partial_{p_r}(p^0E_r)f+\partial_{p_\theta}(p^0E_\theta)f \}dp_rdp_\theta
 	=\int_\rtw (\hat{p}_r,\hat{p}_\theta )\cdot (E_r,E_\theta)f dp_rdp_\theta,
 \end{multline*}  where we use the integral by parts and that $p^0f$ is vanishing at $p_r=+\infty$ for the first identity. 
 Plugging this and \eqref{vlasovuse} into \eqref{continuity identity}, we have
  \begin{multline*}
 \partial_t e + \frac{1}{r}\partial_r(rm)\\=\frac{1}{r}\int_\rtw p_r fdp_rdp_\theta-\int_\rtw \left((p_\theta \bar{B}+\frac{p_\theta^2}{r},  -p_r\bar{B}-\frac{p_rp_\theta}{r})\cdot (\partial_{p_r}f,\partial_{p_\theta} f)\right) dp_rdp_\theta.
 \end{multline*}
 Applying integration by parts to the last integral, we finally obtain \eqref{dtedrmeq}.
 \end{proof}
 
Now we are ready to prove Lemma \ref{jtheta}.
 \begin{proof}[Proof of Lemma~\ref{jtheta}]
Recall that $t_1(t,r)\eqdef \max\{0,t-r+r_1\}$ and $t_2(t,r)\eqdef \max\{0,t-r_2+r\}.$ 
 We first consider the case that $r_1\le r<\frac{r_1+r_2}{2}.$ In this case, note that $t_1(t,r)>t_2(t,r).$ For any fixed $\theta\in[0,2\pi),$ we consider the two-dimensional space-time region $\Delta\eqdef \Delta_1\cup \Delta_2$ as in Figure \ref{fig 1} where
 $$\Delta_1\eqdef \{(\tau,r'): t_1(t,r)\leq \tau\leq t\ \text{and}\ |r'-r|\leq t-\tau\}$$ and $$\Delta_2\eqdef \{(\tau,r'): t_2(t,r)\leq \tau\leq t_1(t,r) \  \text{and}\ r_1\leq r'\leq r+t-\tau\}.$$ 
 \begin{figure}[h]
 	\includegraphics[scale=0.2]{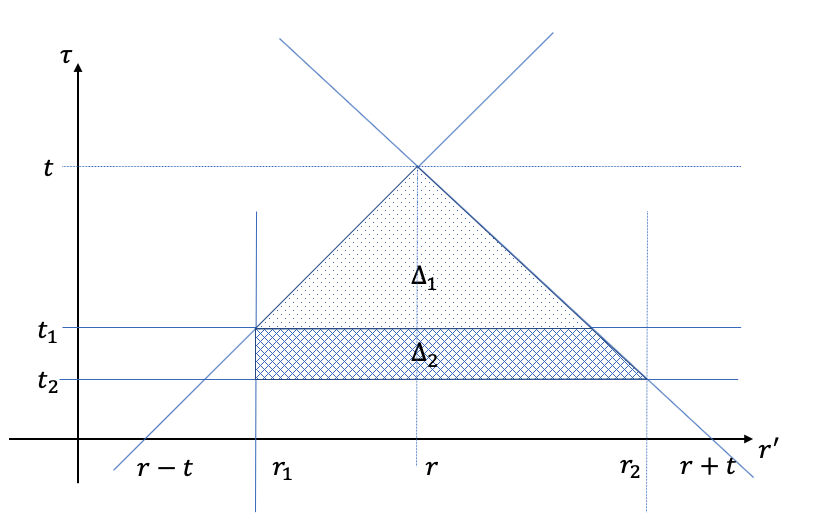}
 	\caption{The domain $\Delta$}\label{fig 1}
 \end{figure}\newline
Integrating \eqref{dtedrmeq} over $\Delta$ with respect to the measure $rdrdt$ and then applying Green's theorem with the counterclockwise line integral, we obtain
 \begin{multline*}
0=\Blue{\int_\Delta [\partial_{r'}(r'm)+\partial_\tau(r'e)]dr'd\tau=\oint_{\partial\Delta}\left(-(r'e)dr'+(r'm)d\tau\right)}\\ 
 =-\int_{r_1}^{r_2}(r'e)(t_2(t,r),r')dr'+\int_{t_2(t,r)}^t(rm+re)(\tau,r+t-\tau)d\tau\\+\int_{t}^{t_1(t,r)}(rm-re)(\tau,r-t+\tau)d\tau+\int_{t_1(t,r)}^{t_2(t,r)}(rm)(\tau,r_1)d\tau,
 \end{multline*}where the first integral is the integration on the bottom line, the second and the third ones are the integration at the top sides of the cone, and the last one is the integral on the vertical boundary at $r=r_1$.  Then a direct rearrangement yields 
  \begin{multline}\label{219}
\int_{t_2(t,r)}^t(re+rm)(\tau,r+t-\tau)d\tau+\int^{t}_{t_1(t,r)}(re-rm)(\tau,r-t+\tau)d\tau\\ =\int_{r_1}^{r_2}(r'e)(t_2(t,r),r')dr'+\int_{t_2(t,r)}^{t_1(t,r)}r_1(E^bB^b)(\tau,r_1)d\tau,
  \end{multline} because $m(\tau,r_1)=(E^bB^b)(\tau,r_1)$ by the boundary condition \eqref{boundary}. It follows from the definitions of $e$ and $m$ that 
  \begin{multline*}
  	(re\pm rm)(t,r)=\frac{r}{2}[|E|^2+B^2]+r\int_\rtw p^0f(t,r,p_r,p_\theta)dp_rdp_\theta
  	\pm (rE_\theta B)(t,r)\\
  	\pm r \int_\rtw (p_rf)(t,r,p_r,p_\theta)dp_rdp_\theta\\
  	=\frac{r}{2}[|E_r|^2+(E_\theta\pm B)^2]+r\int_\rtw (p^0\pm p_r)f(t,r,p_r,p_\theta)dp_rdp_\theta
  	\\\geq r\int_\rtw \frac{|p_\theta|}{p^0}f(t,r,p_r,p_\theta)dp_rdp_\theta,
  \end{multline*}where the last inequality holds as $f$ is non-negative and $p^0\pm p_r \ge \frac{|p_\theta|}{p^0}.$
  Together with \eqref{219}, we finally obtain \begin{multline}\int_{t_1(t,r)}^t(r|j_\theta|)(\tau,r-t+\tau)d\tau+
  \int_{t_2(t,r)}^t(r|j_\theta|)(\tau,r+t-\tau)d\tau\\\leq  \int_{r_1}^{r_2 } r'e(t_2(t,r),r')dr' +\int_{t_2(t,r)}^{t_1(t,r)}r_1(E^b_\theta B^b)(\tau,r_1)d\tau.
  \end{multline}

 On the other hand, if  $\frac{r_1+r_2}{2}\le r\le r_2,$ then $t_1(t,r)\le t_2(t,r).$ For any fixed $\theta\in[0,2\pi),$ we consider the two-dimensional space-time region $\Delta\eqdef \Delta_1\cup \Delta_2$ where
 $$\Delta_1\eqdef \{(\tau,r'): t_2(t,r)\leq \tau\leq t\ \text{and}\ |r'-r|\leq t-\tau\}$$ and $$\Delta_2\eqdef \{(\tau,r'): t_1(t,r)\leq \tau\leq t_2(t,r) \  \text{and}\ r+t-\tau\le r'\le r_2\}.$$ 
 Therefore, the counterclockwise line integral for the Green theorem is the same as the case $r\le \frac{r_1+r_2}{2}$ except that now the line integral $-\int_{r_1}^{r_2}(r'e)(t_1(\tau,r),r')dr$ is along the line $\tau=t_1$ and the line integral $\int_{t_1(t,r)}^{t_2(t,r)}(rm)(\tau,r')d\tau$ is now along the line $r'=r_2$, instead. Therefore we obtain \begin{multline}\int_{t_1(t,r)}^t(r|j_\theta|)(\tau,r-t+\tau)d\tau+
  \int_{t_2(t,r)}^t(r|j_\theta|)(\tau,r+t-\tau)d\tau\\\leq  \int_{r_1}^{r_2 } r'e(t_1(t,r),r')dr' +\int_{t_2(t,r)}^{t_1(t,r)}r_2(E^b_\theta B^b)(\tau,r_2)d\tau.
  \end{multline}
 \end{proof}
We are now interested in deriving an upper-bound estimate for the energy  $$\int_{r_1}^{r_2 }  r'e(\min\{t_1(t,r),t_2(t,r)\},r')dr' $$ that appeared in the proof of Lemma \ref{jtheta}. 
Indeed, we have the following lemma on the conservation of the energy. 
 \begin{lemma}\label{energydissipation}Define \Blue{$e(t,r)$ as in Lemma \ref{dtedrm}.} Then for any $t\in[0,T]$, 
 	\begin{multline*}
 	\int_{r_1}^{r_2 }  r'e(t,r')dr' \\ =\int_{r_1}^{r_2 }  r'e(0,r')dr' -\int_0^t\left[(r_2E^b_\theta B^b)(\tau,r_2)-(r_1E^b_\theta B^b)(\tau,r_1)\right]d\tau.
 	\end{multline*}
 \end{lemma}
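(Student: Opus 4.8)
The plan is to integrate the pointwise energy identity \eqref{dtedrmeq} of Lemma~\ref{dtedrm} first over the annular cross-section in $r$ and then over time. Since \eqref{dtedrmeq} reads $\partial_t e + \frac{1}{r}\partial_r(rm) = 0$, I would begin by multiplying through by $r$ to put it in the conservative one-dimensional form
\[
r\,\partial_t e + \partial_r(rm) = 0,
\]
in which the geometric weight has been absorbed and the spatial term is an exact $r$-derivative.

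Next I would integrate this identity in $r$ over $[r_1,r_2]$. Because the weight $r$ is independent of $t$, the first term becomes $\frac{d}{dt}\int_{r_1}^{r_2} r\,e(t,r)\,dr$, while the second integrates to a pure boundary contribution by the fundamental theorem of calculus, giving $r_2\,m(t,r_2) - r_1\,m(t,r_1)$. This produces the differential-in-time identity
\[
\frac{d}{dt}\int_{r_1}^{r_2} r\,e(t,r)\,dr = -\bigl(r_2\,m(t,r_2) - r_1\,m(t,r_1)\bigr).
\]

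It then remains to evaluate $m$ at the two boundary radii. Recalling that $m(t,r) = \int_\rtw p_r f\,dp_r dp_\theta + (E_\theta B)(t,r)$, I would invoke the standing a priori confinement assumption of this section (the particles remain strictly inside $\Omega$ as in Theorem~\ref{maintheorem2}), so that $f$ together with all of its momentum moments vanishes at $r=r_1$ and $r=r_2$; in particular the kinetic $p_r$-moment drops out and $m(t,r_i) = (E_\theta B)(t,r_i)$ for $i=1,2$. Using the boundary condition \eqref{boundary} for $E_\theta$ together with the boundary value $B^b$ of $B$ determined in Lemma~\ref{Pplus lemma}, one obtains $m(t,r_i) = (E^b_\theta B^b)(t,r_i)$. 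Integrating the displayed differential identity in time from $0$ to $t$ then yields exactly the claimed formula.

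The computation is short, and the only point requiring genuine care is the vanishing of the $p_r$-moment of $f$ at the spatial boundary: this is precisely where the confinement hypothesis in force throughout this section enters, and without it the boundary flux of energy would not reduce to the purely electromagnetic term $(E^b_\theta B^b)$. The remaining manipulations—differentiating under the integral sign and the time integration—are routine given the assumed $C^1$ regularity of the fields and the compact momentum support of $f$.
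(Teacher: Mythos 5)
Your argument is correct and follows essentially the same route as the paper: multiply the energy identity \eqref{dtedrmeq} by $r$, integrate over $[r_1,r_2]$ to reduce the flux term to the boundary values of $rm$, use the vanishing of $f$ (and hence of its $p_r$-moment) at $r=r_1,r_2$ from the confinement to identify $m(t,r_i)=(E^b_\theta B^b)(t,r_i)$, and integrate in time. No gaps.
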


\begin{proof}[Proof of Lemma \ref{energydissipation}]
We observe that
$$\partial_t\int_{r_1}^{r_2 }   r'e(t,r')dr' =\int_{r_1}^{r_2 }   \partial_t (r'e)(t,r')dr' = -\int_{r_1}^{r_2 }   \partial_{r'} (r'm)(t,r')dr' ,$$
because of Identity~\eqref{dtedrmeq}. Then we further have
$$\int_{r_1}^{r_2 }   \partial_{r'} (r'm)(t,r')dr' = r_2m(t,r_2)-r_1m(t,r_1).$$ Since $f$ vanishes at the boundaries $r=r_1$, $r_2$, the definition of $m$ further implies that 
$$ r_2m(t,r_2)-r_1m(t,r_1)=\left[(r_2E^b_\theta B^b)(t,r_2)-(r_1E^b_\theta B^b)(t,r_1)\right].$$
Therefore, we obtain the lemma by integrating with respect to the time variable.
\end{proof}
Finally, in the following lemma, we obtain the following identity for the boundary values $P_+(t,r)$ at $r=r_1$ and $r=r_2$. We can obtain almost the same lemma for $P_-$ and we omit it.
\begin{lemma}
\label{Pplus lemma}Define $M\eqdef \left\lfloor\frac{t}{r_2-r_1}\right\rfloor\ge 0.$
For any $t\in [0,T],$ we have,
for even $M$,
     \begin{multline}\notag
        P_+(t,r_1)
        =-P_-(0,t-M(r_2-r_1)+r_1)\\-\int_{0}^{t-M(r_2-r_1)}(B-r_1j_\theta)(\tau,r_1+t-M(r_2-r_1)-\tau)d\tau\\+2r_1\sum_{k=0}^{M/2}E_\theta^b(t-2k(r_2-r_1),r_1)\\
        -2r_21_{M\ge 2}\sum_{k=0}^{M/2-1}E_\theta^b(t-(2k+1)(r_2-r_1),r_2)
        \\+1_{M\ge 2}\sum_{k=1}^{M/2}\int_{t-2k(r_2-r_1)}^{t-(2k-1)(r_2-r_1)}(B-r_2j_\theta)(\tau,r_2-t+(2k-1)(r_2-r_1)+\tau)d\tau\\
        -1_{M\ge 2}\sum_{k=0}^{M/2-1}\int^{t-2k(r_2-r_1)}_{t-(2k+1)(r_2-r_1)}(B-r_1j_\theta)(\tau,r_1+t-2k(r_2-r_1)-\tau)d\tau,
    \end{multline} and for odd $M$,
     \begin{multline}\notag
        P_+(t,r_1)
        =P_+(0,r_2-t+M(r_2-r_1))\\+\int_{0}^{t-M(r_2-r_1)}(B-r_2j_\theta)(\tau,r_2-t+M(r_2-r_1)+\tau)d\tau\\+2r_1\sum_{k=0}^{\frac{M-1}{2}}E_\theta^b(t-2k(r_2-r_1),r_1)\\
        -2r_2\sum_{k=0}^{\frac{M-1}{2}}E_\theta^b(t-(2k+1)(r_2-r_1),r_2)
        \\+1_{M\ge 3}\sum_{k=1}^{\frac{M-1}{2}}\int_{t-2k(r_2-r_1)}^{t-(2k-1)(r_2-r_1)}(B-r_2j_\theta)(\tau,r_2-t+(2k-1)(r_2-r_1)+\tau)d\tau\\
        -\sum_{k=0}^{\frac{M-1}{2}}\int^{t-2k(r_2-r_1)}_{t-(2k+1)(r_2-r_1)}(B-r_1j_\theta)(\tau,r_1+t-2k(r_2-r_1)-\tau)d\tau.
    \end{multline}Also, we will obtain similar representations for $P_+(t,r_2)$ by using
      $$P_+(t,r_2)=P_+(t-(r_2-r_1),r_1)+\int_{t-(r_2-r_1)}^t(B-r_2j_\theta)(\tau,r_2-t+\tau)d\tau,$$ if $t\ge r_2-r_1$ and 
  $$	P_+(t,r_2)=P_+(0,r_2-t)+\int_{0}^t(B-r_2j_\theta)(\tau,r_2-t+\tau)d\tau,$$ if $0\le t<r_2-r_1.$
\end{lemma}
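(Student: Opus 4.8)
The plan is to propagate the Riemann-type variables $P_\pm=rE_\theta\pm rB$ along their characteristics and to reflect them off the two walls $r=r_1$ and $r=r_2$ using the boundary condition \eqref{boundary}. The only algebraic input from the boundary is that, at either wall $r=r_i$ ($i=1,2$), one has $P_+(\tau,r_i)+P_-(\tau,r_i)=2r_iE_\theta(\tau,r_i)=2r_iE_\theta^b(\tau,r_i)$, so that
\begin{equation*}
P_+(\tau,r_i)=2r_iE_\theta^b(\tau,r_i)-P_-(\tau,r_i),\qquad P_-(\tau,r_i)=2r_iE_\theta^b(\tau,r_i)-P_+(\tau,r_i).
\end{equation*}
Thus each contact with a wall converts a $P_+$ into a $-P_-$ (or vice versa) up to an explicit boundary term. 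First I would start from $P_+(t,r_1)$ and rewrite it, via this relation at $r_1$, as $2r_1E_\theta^b(t,r_1)-P_-(t,r_1)$; this replaces the backward-inadmissible $P_+$ characteristic emanating to the left of $r_1$ by the admissible $P_-$ characteristic that genuinely enters the annulus.

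Next I would trace $P_-$ backwards from $(t,r_1)$ using the characteristic formula \eqref{P-}: its characteristic $r'=r_1+t-\tau$ moves rightward as $\tau$ decreases and first meets the opposite wall $r=r_2$ at time $t-(r_2-r_1)$, so that
\begin{equation*}
P_-(t,r_1)=P_-\bigl(t-(r_2-r_1),r_2\bigr)+\int_{t-(r_2-r_1)}^{t}(B-rj_\theta)(\tau,r_1+t-\tau)\,d\tau,
\end{equation*}
provided $t\ge r_2-r_1$; otherwise the characteristic reaches the initial time $\tau=0$ first and one picks up $P_-(0,r_1+t)$ instead. Reflecting at $r_2$ turns $P_-(t-(r_2-r_1),r_2)$ into $2r_2E_\theta^b(t-(r_2-r_1),r_2)-P_+(t-(r_2-r_1),r_2)$, and then I would propagate $P_+$ backwards from $(t-(r_2-r_1),r_2)$ to $(t-2(r_2-r_1),r_1)$ using \eqref{P+}. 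Iterating this reflect-and-propagate step, each crossing of the annulus lowers the time by $r_2-r_1$, so after $M=\lfloor t/(r_2-r_1)\rfloor$ crossings the remaining time lies in $[0,r_2-r_1)$ and the last (partial) characteristic segment terminates on the initial data at $\tau=0$.

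Finally I would collect the terms. Because each reflection introduces exactly one minus sign, the tracked quantity is always $P_+$ with coefficient $+1$ or $P_-$ with coefficient $-1$; consequently the accumulated boundary contributions alternate, with the reflections at $r_1$ (occurring at times $t-2k(r_2-r_1)$) entering as $+2r_1E_\theta^b(\cdot,r_1)$ and those at $r_2$ (at times $t-(2k+1)(r_2-r_1)$) as $-2r_2E_\theta^b(\cdot,r_2)$, while the source integrals along consecutive segments alternate in sign as well. The parity of $M$ decides whether the terminal segment lands on $P_-(0,\cdot)$ (even $M$, forcing one extra reflection at $r_1$ and yielding the first listed term $-P_-(0,t-M(r_2-r_1)+r_1)$) or on $P_+(0,\cdot)$ (odd $M$). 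The cleanest way to make this rigorous is an induction on $M$, treating the two parities separately; the factors $1_{M\ge 2}$ and $1_{M\ge 3}$ merely record that certain sums are empty for the smallest values of $M$. The stated representations for $P_+(t,r_2)$ then follow from a single application of \eqref{P+} along the one characteristic joining $r_1$ to $r_2$ (when $t\ge r_2-r_1$) or joining the initial line $\tau=0$ to $r_2$ (when $0\le t<r_2-r_1$). I expect the main obstacle to be purely the bookkeeping: correctly indexing the finite sums, pinning down the summation limits and the small-$M$ indicator functions, and tracking the exact spatial arguments and signs of the many source integrals, rather than any analytic difficulty, since every individual step is an elementary transport/reflection identity.
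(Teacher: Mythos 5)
Your proposal is correct and follows essentially the same route as the paper: reflect $P_+$ into $-P_-$ (and vice versa) at each wall via $P_++P_-=2r_iE_\theta^b$, propagate backwards across the annulus with \eqref{P+} and \eqref{P-} picking up the source integrals, iterate $M=\lfloor t/(r_2-r_1)\rfloor$ times until the final segment terminates on the initial line, and treat $P_+(t,r_2)$ by a single application of \eqref{P+}. The paper carries out the iteration by "repeating the procedure $M-2$ more times" rather than by a formal induction, but the signs, boundary terms, parity dichotomy, and terminal data all match your description.
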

\begin{proof}
     We first observe that
    $$P_+(t,r_1)= -P_-(t,r_1)+2r_1E_\theta^b(t,r_1).$$ Then by \eqref{P-} we have \begin{multline}\notag
        P_+(t,r_1)= -P_-(t-(r_2-r_1),r_2)\\
        -\int_{t-(r_2-r_1)}^t(B-r_1j_\theta)(\tau,r_1+t-\tau)d\tau+2r_1E_\theta^b(t,r_1).
    \end{multline}Then using 
    $$P_-(t-(r_2-r_1),r_2)=-P_+(t-(r_2-r_1),r_2)+2r_2E_\theta^b(t-(r_2-r_1),r_2)$$ and using \eqref{P+}, we have
    \begin{multline}\notag
        P_+(t,r_1)= -P_-(t-(r_2-r_1),r_2)\\
        -\int_{t-(r_2-r_1)}^t(B-r_1j_\theta)(\tau,r_1+t-\tau)d\tau+2r_1E_\theta^b(t,r_1)\\
        =P_+(t-(r_2-r_1),r_2)-2r_2E_\theta^b(t-(r_2-r_1),r_2)\\-\int_{t-(r_2-r_1)}^t(B-r_1j_\theta)(\tau,r_1+t-\tau)d\tau+2r_1E_\theta^b(t,r_1),
    \end{multline}
    and so
        \begin{multline}\notag
        P_+(t,r_1)
        =P_+(t-2(r_2-r_1),r_1)+\int_{t-2(r_2-r_1)}^{t-(r_2-r_1)}(B-r_2j_\theta)(\tau,r_2-t+(r_2-r_1)+\tau)d\tau\\
        -2r_2E_\theta^b(t-(r_2-r_1),r_2)\\-\int_{t-(r_2-r_1)}^t(B-r_1j_\theta)(\tau,r_1+t-\tau)d\tau+2r_1E_\theta^b(t,r_1)\\
        =-P_-(t-2(r_2-r_1),r_1)+2r_1E_\theta^b(t-2(r_2-r_1),r_1)\\+\int_{t-2(r_2-r_1)}^{t-(r_2-r_1)}(B-r_2j_\theta)(\tau,r_2-t+(r_2-r_1)+\tau)d\tau\\
        -2r_2E_\theta^b(t-(r_2-r_1),r_2)\\-\int_{t-(r_2-r_1)}^t(B-r_1j_\theta)(\tau,r_1+t-\tau)d\tau+2r_1E_\theta^b(t,r_1).
    \end{multline}
    Repeating this procedure of reducing the time variable $M-2$ more times, we obtain for even $M$
     \begin{multline}\notag
        P_+(t,r_1)
        =-P_-(t-M(r_2-r_1),r_1)+2r_1\sum_{k=0}^{M/2}E_\theta^b(t-2k(r_2-r_1),r_1)\\
        -2r_2\sum_{k=0}^{M/2-1}E_\theta^b(t-(2k+1)(r_2-r_1),r_2)
        \\+\sum_{k=1}^{M/2}\int_{t-2k(r_2-r_1)}^{t-(2k-1)(r_2-r_1)}(B-r_2j_\theta)(\tau,r_2-t+(2k-1)(r_2-r_1)+\tau)d\tau\\
        -\sum_{k=0}^{M/2-1}\int^{t-2k(r_2-r_1)}_{t-(2k+1)(r_2-r_1)}(B-r_1j_\theta)(\tau,r_1+t-2k(r_2-r_1)-\tau)d\tau,
    \end{multline} and for odd $M$
     \begin{multline}\notag
        P_+(t,r_1)
        =P_+(t-M(r_2-r_1),r_2)+2r_1\sum_{k=0}^{\frac{M-1}{2}}E_\theta^b(t-2k(r_2-r_1),r_1)\\
        -2r_2\sum_{k=0}^{\frac{M-1}{2}}E_\theta^b(t-(2k+1)(r_2-r_1),r_2)
        \\+1_{M\ge 3}\sum_{k=1}^{\frac{M-1}{2}}\int_{t-2k(r_2-r_1)}^{t-(2k-1)(r_2-r_1)}(B-r_2j_\theta)(\tau,r_2-t+(2k-1)(r_2-r_1)+\tau)d\tau\\
        -\sum_{k=0}^{\frac{M-1}{2}}\int^{t-2k(r_2-r_1)}_{t-(2k+1)(r_2-r_1)}(B-r_1j_\theta)(\tau,r_1+t-2k(r_2-r_1)-\tau)d\tau.
    \end{multline}
    Finally, using \eqref{P+} and \eqref{P-} we can write $P_-(t-M(r_2-r_1),r_1)$ and  $P_+(t-M(r_2-r_1),r_2)$ in each case in terms of the initial data and the integrals as
    \begin{multline*}P_-(t-M(r_2-r_1),r_1)=P_-(0,t-M(r_2-r_1)+r_1)\\+\int_{0}^{t-M(r_2-r_1)}(B-r_1j_\theta)(\tau,r_1+t-M(r_2-r_1)-\tau)d\tau,\end{multline*} and 
  \begin{multline*}P_+(t-M(r_2-r_1),r_2)
  =P_+(0,r_2-t+M(r_2-r_1))\\+\int_{0}^{t-M(r_2-r_1)}(B-r_2j_\theta)(\tau,r_2-t+M(r_2-r_1)+\tau)d\tau.\end{multline*}
  Therefore, we obtain the lemma for $P_+(t,r_1).$ Also, note that by \eqref{P+}
  $$P_+(t,r_2)=P_+(t-(r_2-r_1),r_1)+\int_{t-(r_2-r_1)}^t(B-r_2j_\theta)(\tau,r_2-t+\tau)d\tau,$$ if $t\ge r_2-r_1$ and 
  $$	P_+(t,r_2)=P_+(0,r_2-t)+\int_{0}^t(B-r_2j_\theta)(\tau,r_2-t+\tau)d\tau,$$ if $0\le t<r_2-r_1,$ since $t_1(t,r_2)=\max\{0,t-r_2+r_1\}.$
  Thus we obtain the lemma for $P_+(t,r_2)$. This completes the proof.
\end{proof}
\begin{remark}\label{missingboundary remark}
    Lemma \ref{Pplus lemma} allows us to represent the ``unknown" boundary values $B^b(t,r_1)$ and $B^b(t,r_2)$ in terms of only the given initial data \eqref{initial}, the boundary data \eqref{boundary}, and the integrals of $B$ and $j_\theta.$ This is because we have
    $$B^b(t,r_1)=\frac{P_+(t,r_1)}{r_1}-E^b_\theta(t,r_1),$$
    and
    $$B^b(t,r_2)=\frac{P_+(t,r_2)}{r_2}-E_\theta^b(t,r_2).$$
\end{remark}
 Finally the previous lemmas imply the following uniform a priori $L^\infty$ bounds for the fields $E_\theta$ and $B$. 
 \begin{proposition}\label{aprioriestimatesEB}We have
 	\begin{equation}\notag \begin{split}&\|E_\theta\|_{L^\infty ([0,t]\times {[r_1,r_2] })}\le \tilde{C} e^{Ct},\\
 	&\|B\|_{L^\infty ([0,t]\times {[r_1,r_2] })}\leq  \tilde{C} e^{Ct},\end{split}\end{equation}
 where $\tilde{C}$ and $C$ are defined as \eqref{tildeCdef} and \eqref{Cdef} and depend only on $r_1,r_2,$ $\|B^0\|_{L^\infty({[r_1,r_2] })}$, $\|E_\theta^0\|_{L^\infty({[r_1,r_2] })}$, $\|E_\theta^b\|_{L^\infty([0,t]\times\partial{[r_1,r_2] })}$, $\|p^0f^0\|_{L^1({[r_1,r_2] }\times \rtw)},$ $\lambda$, and $t$.
 \end{proposition}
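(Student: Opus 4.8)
The plan is to reduce everything to a single integral inequality for
$$N(t) \eqdef \|E_\theta\|_{L^\infty([0,t]\times[r_1,r_2])} + \|B\|_{L^\infty([0,t]\times[r_1,r_2])},$$
and then close it by a Grönwall-type argument. Since Proposition~\ref{E1estimate} already bounds $\|E_r\|_{L^\infty}$ by $\|f^0\|_{L^1}+\lambda$ with no dependence on the other fields, controlling $N$ suffices. I would begin from the characteristic representations \eqref{e:Char_Formula_for_E_theta}--\eqref{e:Char_Formula_for_B}, which yield for every $(t,r)$ the pointwise bound
$$r\big(|E_\theta|+|B|\big)(t,r) \le |P_+(\mathrm{foot}_1)|+|P_-(\mathrm{foot}_2)| + \sum_{i=1,2}\int_{t_i}^t\big(|B|+r|j_\theta|\big)\,d\tau,$$
where $\mathrm{foot}_1,\mathrm{foot}_2$ denote the endpoints of the two null characteristics issuing backward from $(t,r)$, each lying either on $\{\tau=0\}$ or on $\partial\Omega$.

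A foot on $\{\tau=0\}$ contributes at most $r_2(\|E_\theta^0\|_{L^\infty}+\|B^0\|_{L^\infty})$. A foot on the spatial boundary I expand by Lemma~\ref{Pplus lemma}, which writes $P_+(\tau,r_i)$ (and likewise $P_-$) purely in terms of the initial data, the boundary data $E_\theta^b$, and integrals of $(B-rj_\theta)$ over the reflected null segments. Two features of that expansion are decisive: the number of $E_\theta^b$-terms is at most $O(1+t/(r_2-r_1))$, contributing $\le C(1+t)\|E_\theta^b\|_{L^\infty}$; and the reflected null segments tile the time interval $[0,t]$ exactly once, so that the sum of all the $\int|B|$ contributions is controlled by $\int_0^t N(\tau)\,d\tau$.

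The core is the treatment of the $\int r|j_\theta|$ terms, which I handle through the energy. Using the pointwise inequality $r|j_\theta|\le re\pm rm$ for the matching null direction (valid by $f\ge0$ and $p^0\pm p_r\ge|p_\theta|/p^0$, exactly as in the proof of Lemma~\ref{jtheta}) together with Green's theorem applied to the energy identity \eqref{dtedrmeq}, each such integral is bounded by a spacelike energy flux $\int_{r_1}^{r_2}r'e(\cdot,r')\,dr'$ plus vertical boundary work $\int r_i(E_\theta^b B^b)$. By Lemma~\ref{energydissipation} the energy is conserved up to boundary work, so every slice obeys
$$\int_{r_1}^{r_2}r'e(\tau,r')\,dr' \le E_0 + 2r_2\|E_\theta^b\|_{L^\infty}\int_0^\tau N(s)\,ds,$$
with $E_0$ the finite, data-dependent initial energy. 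The feedback closes because the only unknown boundary datum $B^b(\tau,r_i)$ satisfies $|B^b(\tau,r_i)|\le N(\tau)$; this, and not any control of $B_{ext}$, is what tames the boundary-work terms. I emphasize that $B_{ext}$ never enters: the magnetic force drops out of the energy identity upon integration in $p$ (since $\bar{B}$ is $p$-independent) and is absent from Maxwell's equations \eqref{pMaxwell}, so the whole bound is automatically independent of the external field, as anticipated in Remark~\ref{catch22}.

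Summing the cone integral (one energy slice) and the $O(t/(r_2-r_1))$ reflection integrals (energy slices at the tiling times, whose accumulated boundary-work contributions produce a memory kernel of order $(t-\tau)$), I arrive at an inequality of the form
$$r_1 N(t) \le C_0(1+t) + \int_0^t\big(C_1 + C_2(t-\tau)\big)N(\tau)\,d\tau,$$
with $C_0,C_1,C_2$ depending only on $r_1,r_2,\|B^0\|_{L^\infty},\|E_\theta^0\|_{L^\infty},\|E_\theta^b\|_{L^\infty},\|p^0f^0\|_{L^1},\lambda$. Writing $\Phi(t)=\int_0^t N$ and using $\int_0^t(t-\tau)N(\tau)\,d\tau=\int_0^t\Phi(s)\,ds$, this is equivalent to a second-order constant-coefficient differential inequality, whose solutions grow at the constant exponential rate $C=\tfrac12\big(C_1/r_1+\sqrt{(C_1/r_1)^2+4C_2/r_1}\big)$; hence $N(t)\le\tilde C e^{Ct}$, which yields both claimed bounds since $N$ dominates each norm separately. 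The main obstacle is precisely this last point: the reflection bookkeeping injects a linearly growing memory kernel into the Grönwall inequality, and one must recognize that such a kernel still produces a genuine constant-rate exponential (via the second-order comparison) rather than the $e^{Ct^2}$ growth that a naive term-by-term estimate would suggest.
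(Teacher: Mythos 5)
Your proposal is correct, and its skeleton coincides with the paper's proof: the same characteristic representations \eqref{e:Char_Formula_for_E_theta}--\eqref{e:Char_Formula_for_B}, the same reflection expansion of the boundary values of $P_\pm$ via Lemma~\ref{Pplus lemma}, the same control of the $\int r|j_\theta|$ integrals through Lemma~\ref{jtheta} and the energy balance of Lemma~\ref{energydissipation}, and the same closing of the feedback loop through $|B^b(\tau,r_i)|\le \|B(\tau)\|_{L^\infty([r_1,r_2])}$. Where you genuinely diverge is the final bookkeeping. The paper bounds each of the $O(t/(r_2-r_1))$ reflected energy slices and boundary-work integrals by its value over the whole interval $[0,t]$, absorbs the multiplicity into the coefficients \eqref{tildeCdef}--\eqref{Cdef} (note the factors $\lceil t/(r_2-r_1)\rceil$ there), and runs a first-order Gr\"onwall argument; the resulting rate constant $C$ therefore grows linearly in $t$, and the bound $\tilde C e^{Ct}$ is exponential only in the sense that the constants are permitted to depend on $t$, which the statement of the proposition allows. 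You instead count how many reflection times exceed a given $s$ --- roughly $(t-s)/(r_2-r_1)$ --- which turns the same sum into the convolution kernel $C_1+C_2(t-\tau)$, and the substitution $\Phi(t)=\int_0^t N$ together with the second-order constant-coefficient comparison yields a genuinely time-independent exponential rate. Both arguments establish the proposition; yours is quantitatively sharper and correctly avoids the spurious $e^{ct^2}$ that a cruder estimate of the memory term would produce. One small correction to a side remark: the magnetic force does not drop out of the energy identity because $\bar B$ is independent of $p$, but because the Lorentz force is perpendicular to the velocity, so that the terms $\hat p_r\hat p_\theta\bar B$ cancel in the proof of Lemma~\ref{dtedrm}; the substantive conclusion you draw from it --- that the field estimates are independent of $B_{ext}$, as in Remark~\ref{catch22} --- is nonetheless correct, since $B_{ext}$ also never enters Maxwell's equations \eqref{pMaxwell}.
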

\begin{remark}
It is worthwhile to mention that the $L^\infty$-bounds of the fields $E_\theta$ and $B$ are exponentially growing in time by Proposition \ref{aprioriestimatesEB}. This exponential growth of the fields is the outcome of the appearance of the inhomogeneous source term in the wave equations for $E_\theta$ and $B$ and the Gr\"onwall inequality in the mathematical viewpoint, but this exponential growth is indeed physically relevant in the geometry of the annulus (or the disk) in the physical viewpoint. The reasoning behind this is on the relationship between the fields $E_\theta$ and $B$ via Amp\`{e}re's law $\eqref{pMaxwell}_2$-$\eqref{pMaxwell}_3$; i.e., the curl of each can determine the other. Therefore, the symmetry in the $x_2$-direction assumed in \cite{MR3294216} is very strong as all of the fields must be constant in the $x_2$-direction, while the fields interact via the curl of each other. Therefore, if we just assume the symmetry in the $\theta$-direction as in this paper, the fields can accelerate each other and we have less restrictions than the $x_2$-symmetric situation in the case of the magnetic confinement in an infinite strip \cite[Corollary 2.4]{MR3294216}, where the fields grow linearly in time. Of course, our proposition does not guarantee the minimal growth rates on the fields.

\end{remark} 
\begin{proof}[Proof of Proposition~\ref{aprioriestimatesEB}]
First of all, it follows from Formula~\eqref{e:Char_Formula_for_B} and $P_{\pm}\eqdef rE_\theta\pm rB$ that 
\begin{multline}\notag
    (rB)(t,r)=\frac{1}{2}\bigg(P_+(t_1(t,r),r-t+t_1(t,r))+P_+(t_2(t,r),r+t-t_2(t,r))\\
   -2(r+t-t_2(t,r))E_\theta(t_2(t,r),r+t-t_2(t,r)) \bigg)\\+\frac{1}{2}\int_{t_1(t,r)}^t(B-rj_\theta)(\tau,r-t+\tau)d\tau-\frac{1}{2}\int_{t_2(t,r)}^t\left[(B-rj_\theta)(\tau,r+t-\tau)\right]d\tau,
\end{multline}where $\Blue{t_1(t,r)}=\max\{0,t-r+r_1\}$ and $\Blue{t_2(t,r)}=\max\{0,t-r_2+r\}, $ since
\begin{multline*}-P_-(t_2(t,r),r+t-t_2(t,r))\\=P_+(t_2(t,r),r+t-t_2(t,r))
   -2(r+t-t_2(t,r))E_\theta(t_2(t,r),r+t-t_2(t,r)).\end{multline*}
   Then using $ r+t-\Blue{t_2(t,r)}\le r_2$, we have
 	\begin{multline}\label{prop2.6}|B(t,r)|\leq \frac{1}{2r_1}\bigg(|P_+(t_1(t,r),r-t+t_1(t,r))+P_+(t_2(t,r),r+t-t_2(t,r))|\\
   +2r_2|E_\theta(t_2(t,r),r+t-t_2(t,r))| \\
 	+\int_{t_1(t,r)}^t |B(\tau,r-t+\tau)|d\tau
 	+\int_{t_1(t,r)}^t(r|j_\theta|)(\tau,r-t+\tau)d\tau\\ \int_{t_2(t,r)}^t |B(\tau,r+t-\tau)|d\tau+\int_{t_2(t,r)}^t(r|j_\theta|)(\tau,r+t-\tau)d\tau\bigg).
 	\end{multline} Here we note that both tuples $(t_1(t,r),r-t+t_1(t,r))$ and $(t_2(t,r),r-t+t_2(t,r))$ are either on the initial line $t=0$ or on the boundaries $r=r_1$ or $r=r_2.$ Thus, note that $|E_\theta(t_2(t,r),r+t-t_2(t,r))|$ is given by either $E^0_\theta$ of \eqref{initial} or $E^b_\theta$ of \eqref{boundary}. We can also express $P_+$ using Lemma \ref{Pplus lemma}. Now define $$M_1\eqdef \left\lfloor \frac{\Blue{t_1(t,r)}}{r_2-r_1}\right\rfloor\text{ and } M_2\eqdef \left\lfloor \frac{\Blue{t_2(t,r)}}{r_2-r_1}\right\rfloor.$$ By Lemma \ref{Pplus lemma}, 
 	we have,
 	for even $M_1$,
     \begin{multline}\label{P+estimates1}
        |P_+(t_1(t,r),r-t+t_1(t,r))|\\
        \le r_2 \bigg(\|E^0_\theta\|_{L^\infty([r_1,r_2])}+\|B^0\|_{L^\infty([r_1,r_2])}\bigg) 
 	 	+4r_2(M_1+1)\|E^b_\theta\|_{L^\infty([0,\Blue{t_1(t,r)}]\times\partial[r_1,r_2])}\\+\int_{0}^{\Blue{t_1(t,r)}-M_1(r_2-r_1)}(|B|+r_1|j_\theta|)(\tau,r_1+\Blue{t_1(t,r)}-M_1(r_2-r_1)-\tau)d\tau\\+1_{M_1\ge 2}\sum_{k=1}^{M_1/2}\int_{\Blue{t_1(t,r)}-2k(r_2-r_1)}^{\Blue{t_1(t,r)}-(2k-1)(r_2-r_1)}(|B|+r_2|j_\theta|)(\tau,r_2-\Blue{t_1(t,r)}+(2k-1)(r_2-r_1)+\tau)d\tau\\
        +1_{M_1\ge 2}\sum_{k=0}^{M_1/2-1}\int^{\Blue{t_1(t,r)}-2k(r_2-r_1)}_{\Blue{t_1(t,r)}-(2k+1)(r_2-r_1)}(|B|+r_1|j_\theta|)(\tau,r_1+\Blue{t_1(t,r)}-2k(r_2-r_1)-\tau)d\tau,
    \end{multline} and for odd $M_1$,
     \begin{multline}\label{P+estimates2}
        |P_+(t_1(t,r),r-t+t_1(t,r))|\\
        \le r_2 \bigg(\|E^0_\theta\|_{L^\infty([r_1,r_2])}+\|B^0\|_{L^\infty([r_1,r_2])}\bigg) 
 	 	+4r_2(M_1+1)\|E^b_\theta\|_{L^\infty([0,\Blue{t_1(t,r)}]\times\partial[r_1,r_2])}\\+\int_{0}^{\Blue{t_1(t,r)}-M_1(r_2-r_1)}(|B|+r_2|j_\theta|)(\tau,r_2-\Blue{t_1(t,r)}+M_1(r_2-r_1)+\tau)d\tau\\+1_{M_1\ge 3}\sum_{k=1}^{\frac{M_1-1}{2}}\int_{\Blue{t_1(t,r)}-2k(r_2-r_1)}^{\Blue{t_1(t,r)}-(2k-1)(r_2-r_1)}(|B|+r_2|j_\theta|)(\tau,r_2-\Blue{t_1(t,r)}+(2k-1)(r_2-r_1)+\tau)d\tau\\
        +\sum_{k=0}^{\frac{M_1-1}{2}}\int^{\Blue{t_1(t,r)}-2k(r_2-r_1)}_{\Blue{t_1(t,r)}-(2k+1)(r_2-r_1)}(|B|+r_1|j_\theta|)(\tau,r_1+\Blue{t_1(t,r)}-2k(r_2-r_1)-\tau)d\tau.
    \end{multline}
 	Similarly, by Lemma \ref{Pplus lemma}, 
 	we have,
 	for even $M_2$,
     \begin{multline}\label{P+estimates3}
        |P_+(t_2(t,r),r+t-t_2(t,r))|\\
        \le r_2 \bigg(\|E^0_\theta\|_{L^\infty([r_1,r_2])}+\|B^0\|_{L^\infty([r_1,r_2])}\bigg) 
 	 	+4r_2(M_2+1)\|E^b_\theta\|_{L^\infty([0,\Blue{t_2(t,r)}]\times\partial[r_1,r_2])}\\
 	 	+1_{M_2\ge 2}\int_{\Blue{t_2(t,r)}-(r_2-r_1)}^{\Blue{t_2(t,r)}} (|B|+r_2|j_\theta|)(\tau,r_2-\Blue{t_2(t,r)}+\tau)d\tau\\+\int_{0}^{\Blue{t_2(t,r)}-M_2(r_2-r_1)}(|B|+r_2|j_\theta|)(\tau,r_2-\Blue{t_2(t,r)}+M_2(r_2-r_1)+\tau)d\tau\\+1_{M_2\ge 4}\sum_{k=1}^{\frac{M_2-2}{2}}\int_{\Blue{t_2(t,r)}-2k(r_2-r_1)}^{\Blue{t_2(t,r)}-(2k-1)(r_2-r_1)}(|B|+r_2|j_\theta|)(\tau,r_2-\Blue{t_2(t,r)}+(2k-1)(r_2-r_1)+\tau)d\tau\\
        +1_{M_2\ge 2}\sum_{k=0}^{\frac{M_2-2}{2}}\int^{\Blue{t_2(t,r)}-2k(r_2-r_1)}_{\Blue{t_2(t,r)}-(2k+1)(r_2-r_1)}(|B|+r_1|j_\theta|)(\tau,r_1+\Blue{t_2(t,r)}-2k(r_2-r_1)-\tau)d\tau,
    \end{multline} and for odd $M_2$,
     \begin{multline}\label{P+estimates4}
        |P_+(t_2(t,r),r+t-t_2(t,r))|\\
        \le r_2 \bigg(\|E^0_\theta\|_{L^\infty([r_1,r_2])}+\|B^0\|_{L^\infty([r_1,r_2])}\bigg) 
 	 	+4r_2(M_2+1)\|E^b_\theta\|_{L^\infty([0,\Blue{t_2(t,r)}]\times\partial[r_1,r_2])}\\
 	 	+ \int_{\Blue{t_2(t,r)}-(r_2-r_1)}^{\Blue{t_2(t,r)}} (|B|+r_2|j_\theta|)(\tau,r_2-\Blue{t_2(t,r)}+\tau)d\tau\\+\int_{0}^{\Blue{t_2(t,r)}-M_2(r_2-r_1)}(|B|+r_1|j_\theta|)(\tau,r_1+\Blue{t_2(t,r)}-M_2(r_2-r_1)-\tau)d\tau\\+1_{M_2\ge 3}\sum_{k=1}^{\frac{M_2-1}{2}}\int_{\Blue{t_2(t,r)}-2k(r_2-r_1)}^{\Blue{t_2(t,r)}-(2k-1)(r_2-r_1)}(|B|+r_2|j_\theta|)(\tau,r_2-\Blue{t_2(t,r)}+(2k-1)(r_2-r_1)+\tau)d\tau\\
        +1_{M_2\ge 3}\sum_{k=0}^{\frac{M_2-3}{2}}\int^{\Blue{t_2(t,r)}-2k(r_2-r_1)}_{\Blue{t_2(t,r)}-(2k+1)(r_2-r_1)}(|B|+r_1|j_\theta|)(\tau,r_1+\Blue{t_2(t,r)}-2k(r_2-r_1)-\tau)d\tau.
    \end{multline}
Then we plug \eqref{P+estimates1}-\eqref{P+estimates4} into \eqref{prop2.6} and apply Lemma \ref{jtheta} with Lemma \ref{energydissipation} and Proposition \ref{E1estimate} to obtain
 		\begin{multline}\label{2.23}
 		\|B(t)\|_{L^\infty({[r_1,r_2] })}\\
 		\leq \frac{r_2}{2r_1}\bigg( 4\|E_\theta^0\|_{L^\infty({[r_1,r_2] })}+2\|B^0\|_{L^\infty({[r_1,r_2] })}+\left(8\left\lceil\frac{t}{r_2-r_1}\right\rceil+2\right)\|E_\theta^b\|_{L^\infty([0,t]\times\partial{[r_1,r_2] })}\bigg)\\+\frac{1}{2r_1}\bigg(2\int_0^t \|B(\tau)\|_{L^\infty({[r_1,r_2] })}d\tau\\+2r_2^2\left\lceil\frac{t}{r_2-r_1}\right\rceil\left((\|f^0\|_{L^1({[r_1,r_2] }\times \rtw)}+\lambda)^2+\|E^0_\theta\|^2_{L^\infty}+\|B^0\|^2_{L^\infty}\right)\\
 	+2\left\lceil\frac{t}{r_2-r_1}\right\rceil\|rp^0f^0\|_{L^1({[r_1,r_2] }\times \rtw)}\\+4r_2\left\lceil\frac{t}{r_2-r_1}\right\rceil\int_0^t\left(|(E^b_\theta B^b)(\tau,r_2)|+|(E^b_\theta B^b)(\tau,r_1)|\right)d\tau\bigg).
 \end{multline}
\Blue{For further explanations on how we obtain \eqref{2.23}, see Remark \ref{remark2.9} below.}  Hence, by \eqref{2.23} we obtain
 \begin{multline}\label{Bestimate}\|B(t)\|_{L^\infty({[r_1,r_2] })}\\
 		\leq  \frac{r_2}{r_1}\bigg( 2\|E_\theta^0\|_{L^\infty({[r_1,r_2] })}+\|B^0\|_{L^\infty({[r_1,r_2] })}+\left(4\left\lceil\frac{t}{r_2-r_1}\right\rceil+2\right)\|E_\theta^b\|_{L^\infty([0,t]\times\partial{[r_1,r_2] })}\bigg)\\+\frac{1}{2r_1}\bigg(2r_2^2\left\lceil\frac{t}{r_2-r_1}\right\rceil\left((\|f^0\|_{L^1({[r_1,r_2] }\times \rtw)}+\lambda)^2+\|E^0_\theta\|^2_{L^\infty}+\|B^0\|^2_{L^\infty}\right)\\
 	+2\left\lceil\frac{t}{r_2-r_1}\right\rceil\|rp^0f^0\|_{L^1({[r_1,r_2] }\times \rtw)}\bigg)\\+\frac{1}{r_1}\left(1+4r_2\left\lceil\frac{t}{r_2-r_1}\right\rceil\|E^b_\theta\|_{L^\infty([0,t]\times\partial[r_1,r_2])}\right)\int_0^t \|B(\tau)\|_{L^\infty({[r_1,r_2] })}d\tau\\
 		\eqdef \tilde{C}+C\int_0^t \|B(\tau)\|_{L^\infty({[r_1,r_2] })}d\tau,
 \end{multline}
 		where $\tilde{C}$ and $C$ are defined as
 		\begin{multline}\label{tildeCdef}
 		    \tilde{C}\eqdef  \frac{r_2}{r_1}\bigg( 2\|E_\theta^0\|_{L^\infty({[r_1,r_2] })}+\|B^0\|_{L^\infty({[r_1,r_2] })}\\+\left(4\left\lceil\frac{t}{r_2-r_1}\right\rceil+2\right)\|E_\theta^b\|_{L^\infty([0,t]\times\partial{[r_1,r_2] })}\bigg)\\+\frac{1}{2r_1}\bigg(2r_2^2\left\lceil\frac{t}{r_2-r_1}\right\rceil\left((\|f^0\|_{L^1({[r_1,r_2] }\times \rtw)}+\lambda)^2+\|E^0_\theta\|^2_{L^\infty}+\|B^0\|^2_{L^\infty}\right)\\
 	+2\left\lceil\frac{t}{r_2-r_1}\right\rceil\|rp^0f^0\|_{L^1({[r_1,r_2] }\times \rtw)}\bigg),
 		\end{multline}
 		and
 			\begin{equation}\label{Cdef}
 		    C\eqdef \frac{1}{r_1}\left(1+4r_2\left\lceil\frac{t}{r_2-r_1}\right\rceil\|E^b_\theta\|_{L^\infty([0,t]\times\partial[r_1,r_2])}\right).
 		\end{equation}
 		Note that 
 		$\tilde{C}$ and $C$ are functions depending only on $r_1,r_2,$ and given data  $\|B^0\|_{L^\infty({[r_1,r_2] })}$, $\|E_\theta^0\|_{L^\infty({[r_1,r_2] })}$, $\|E_\theta^b\|_{L^\infty([0,t]\times\partial{[r_1,r_2] })}$, $\|rp^0f^0\|_{L^1({[r_1,r_2] }\times \rtw)},$ $\lambda$, and $t$.
Then by the Gr\"{o}nwall lemma, we obtain 
 	\begin{equation}\label{BboundLinfty}\|B(t)\|_{L^\infty({[r_1,r_2] })}\le \tilde{C}e^{C t}.\end{equation}
 	
 	For the estimate on $E_\theta$, we can directly apply the same argument as in the estimation on $B$, since the right-hand sides of \eqref{e:Char_Formula_for_E_theta} and \eqref{e:Char_Formula_for_B} are essentially the same except for changes of some positive and negative signs. Thus, by using the estimate \eqref{BboundLinfty}, we also have
 	\begin{multline}\label{EboundLinfty}\|E_\theta(t)\|_{L^\infty({[r_1,r_2] })}
 	 \leq \tilde{C}+C\int_0^t \|B(\tau)\|_{L^\infty({[r_1,r_2] })}d\tau\\
 	 \le \tilde{C}+ \tilde{C}(e^{Ct}-1)=\tilde{C}e^{Ct}.
 	 \end{multline}
 	This completes the proof of Proposition \ref{aprioriestimatesEB}.
 \end{proof}
 
\begin{remark}\label{remark2.9}
 In this remark, we briefly explain how we obtain the first bound on $B(t)$ in \eqref{2.23}. We briefly explain how we get the exact constants for the upper-bound of $\|B(t)\|_{L^\infty([r_1,r_2])}$. By \eqref{prop2.6}, the contributions on the upper-bound for $\|B(t)\|_{L^\infty([r_1,r_2])}$ are the followings: the bounds for $|P_+(t_i,\cdot)|$, $|E_\theta|$, $\int_{t_1}^t (B+r|j_\theta|) d\tau$, and $\int_{t_2}^t (B+r|j_\theta|) d\tau$. We note that $M_1$ and $M_2$ can be either even or odd, and here we just introduce the case that both are even. Other cases are similar. 

For the contributions on $P_+(t_1,\cdot) $ and $P_+(t_2,\cdot)$ we use \eqref{P+estimates1} and \eqref{P+estimates3} since $M_1$ and $M_2$ are even. Here we note that there appear in the upper-bound $r_2$ copies of $\|E^0_\theta\|_{L^\infty}$ and $\|B^0_\theta\|_{L^\infty}$ and $4r_2(M_1+1)$ copies of $\|E^b_\theta\|_{L^\infty}$ in the upper-bound. Then together with the $2r_2$ copies of $E_\theta $ in \eqref{prop2.6} which is either $E^0_\theta$ or $E^b_\theta$, we obtain the upper bounds of \begin{multline*}
   \qquad\qquad \frac{r_2}{2r_1}\bigg( 4\|E_\theta^0\|_{L^\infty({[r_1,r_2] })}+2\|B^0\|_{L^\infty({[r_1,r_2] })}\\+\left(8\left\lceil\frac{t}{r_2-r_1}\right\rceil+2\right)\|E_\theta^b\|_{L^\infty([0,t]\times\partial{[r_1,r_2] })}\bigg)
\end{multline*} in \eqref{2.23}.
The leftovers in the contributions of $|P_+(t_1,\cdot)| $ and $|P_+(t_2,\cdot)|$ via \eqref{P+estimates1} and \eqref{P+estimates3} are the integrals on $B$ and $r|j_\theta|$ in the different time intervals. If we consider $\|B\|_{L^\infty([r_1,r_2])}$ in the integral, then we can patch all the time intervals $\{[0,t_1-M_1(r_2-r_1)]$, $[t_1-M_1(r_2-r_1),t_1-(M_1-1)(r_2-r_1)]$, $...$, and $ [t_1-(r_2-r_1),t_1]\}$  and also $\{[0,t_2-M_2(r_2-r_1)]$, $[t_2-M_2(r_2-r_1),t_2-(M_2-1)(r_2-r_1)]$, $...$, and $ [t_2-(r_2-r_1),t_2]\}$ and obtain $\int_0^{t_1} \|B\|_{L^\infty}d\tau$ and $\int_0^{t_2} \|B\|_{L^\infty}d\tau$. Then together with the upper-bounds of $\int_{t_1}^t |B(\tau,r-t+\tau)| d\tau$ and $\int_{t_2}^t |B(\tau,r+t-\tau)| d\tau$ appearing in \eqref{prop2.6}, we obtain exactly two copies of $\int_0^t \|B\|_{L^\infty}d\tau$ in the final upper-bound. So the only thing left in the upper-bound estimate for $B(t)$ is the upper-bounds for \begin{equation}\notag\begin{split}&\int_{t_1-2k(r_2-r_1)}^{t_1-(2k-1)(r_2-r_1)} r_2|j_\theta|(\tau,r_2-t_1+(2k-1)(r_2-r_1)+ \tau)d\tau ,\\ &\int_{t_1-(2k+1)(r_2-r_1)}^{t_1-2k(r_2-r_1)} r_1|j_\theta|(\tau,r_1+t_1-2k(r_2-r_1)- \tau)d\tau ,\\ &\int_{t_2-2k(r_2-r_1)}^{t_2-(2k-1)(r_2-r_1)} r_2|j_\theta|(\tau,r_2-t_2+(2k-1)(r_2-r_1)+ \tau)d\tau , \text{ and}\\  &\int_{t_2-(2k+1)(r_2-r_1)}^{t_2-2k(r_2-r_1)} r_1|j_\theta|(\tau,r_2-t_1-2k(r_2-r_1)- \tau)d\tau \end{split}\end{equation} from \eqref{P+estimates1} and \eqref{P+estimates3}.  For each of the integral, we use the estimate either \eqref{2.10} or \eqref{2.11} in Lemma \ref{jtheta} with different $t's$ and $r's$; for instance,  we choose $t=t_1-(2k-1)(r_2-r_1)$ and $r=r_2$ for the estimate of  $\int_{t_1-2k(r_2-r_1)}^{t_1-(2k-1)(r_2-r_1)} r_2|j_\theta|(\tau,r_2-t_1+(2k-1)(r_2-r_1)+ \tau)d\tau$ such that $t_1(t,r)=t_1-2k(r_2-r_1)$ in \eqref{2.11} of Lemma \ref{jtheta}.   
Then for each piece of the temporal integral, we will have one copy of $\int r'e(\cdot,\cdot)dr'$ bound and $\int r_2 |E^b_\theta B^b|(\tau,\cdot )d\tau$ in the upper-bound by Lemma \ref{jtheta}, which will further be bounded from above by Lemma \ref{energydissipation}. This corresponds to the rest of the upper-bound \begin{multline*}\qquad\qquad\frac{1}{2r_1}\bigg(2r_2^2\left\lceil\frac{t}{r_2-r_1}\right\rceil\left((\|f^0\|_{L^1({[r_1,r_2] }\times \rtw)}+\lambda)^2+\|E^0_\theta\|^2_{L^\infty}+\|B^0\|^2_{L^\infty}\right)\\
 	+2\left\lceil\frac{t}{r_2-r_1}\right\rceil\|rp^0f^0\|_{L^1({[r_1,r_2] }\times \rtw)}\\+4r_2\left\lceil\frac{t}{r_2-r_1}\right\rceil\int_0^t\left(|(E^b_\theta B^b)(\tau,r_2)|+|(E^b_\theta B^b)(\tau,r_1)|\right)d\tau\bigg) \end{multline*}in \eqref{2.23}. 
 \end{remark}
 
 \section{Confinement of the plasma for all time}\label{confinement}
 
 This section is devoted to proving the magnetic confinement of the plasma in the spatial domain. For any initial point $(r,p_r,p_\theta)\in [r_1+\delta_0,r_2-\delta_0]\times \rtw,$ we first define the characteristics $R(s)=R(s;0,r,p_r,p_\theta)$, which initially started in a compactly supported set, will never reach the boundary. Fixing $(r,p_r,p_\theta)\in  [r_1+\delta_0,r_2-\delta_0]\times \rtw$, we define the characteristics for the system \eqref{pVlasov} corresponding to the initial point $(r,p_r,p_\theta)$ as the solution $$s\mapsto (R(s),P_r(s),P_\theta(s))=(R(s;0,r,p_r,p_\theta),P_r(s;0,r,p_r,p_\theta),P_\theta(s;0,r,p_r,p_\theta))$$ that solves
 \begin{equation}\label{characteristics}
 \begin{split}
&\frac{dR}{ds}=\hat{P}_r,\\
&\frac{dP_r}{ds}=E_r+\hat{P}_\theta \bar{B}+\frac{P_\theta^2}{RP^0},\\
&\frac{dP_\theta}{ds}=E_\theta-\hat{P}_r\bar{B}-\frac{P_rP_\theta}{RP^0},\\
&R(0;0,r,p_r,p_\theta)=r,\ \ P_r(0;0,r,p_r,p_\theta)=p_r,\ \ P_\theta(0;0,r,p_r,p_\theta)=p_\theta,
\end{split}
\end{equation} where $P^0 \eqdef \sqrt{1+P_r^2+P_\theta^2}$, $\hat{P}_r \eqdef \frac{P_r}{P^0}$ and $\hat{P}_\theta \eqdef \frac{P_\theta}{P^0}$. Furthermore, the functions $E_r$, $E_\theta$ and $\bar{B}$ in \eqref{characteristics} are all evaluated at the point $(s,R(s))$.
We can write the self-consistent magnetic field $B$ in terms of its potential $\psi$; more precisely, we have
\begin{equation}\label{magpotentialeq}
B(t,r)=\frac{1}{r}\left(\frac{\partial(r\psi(t,r))}{\partial r}\right).\end{equation} Without loss of generality, we additionally suppose that $\psi$ satisfies \begin{equation}
    \label{potential assumption at mid}\psi(t,r_m)=0
\end{equation} for all $t\in [0,T]$, where $r_m\eqdef \frac{r_1+r_2}{2}$ is the median radius. \Blue{Otherwise, we can consider our potential $\tilde{\psi}$ as $\tilde{\psi}(t,r)= \psi(t,r)-\frac{r_m}{r}\psi(t,r_m).$ }

Furthermore, if $E_r$, $E_\theta$, $B \in C^1([0,T]\times {[r_1,r_2] })$, then the $C^1$ solutions to the system \eqref{characteristics} exist for a finite time and can be extended to the whole time interval $[0,T]$ if $R(s)$ does not hit the spatial boundary $R(s)=r_1$ or $R(s)=r_2$ for any $s\in[0,T]$. Via Lemma~\ref{magconfi} below, we will prove that the characteristic $R(s)$ never reach the spatial boundary $\partial\Omega$, provided that the external magnetic field $B_{ext}$ is well-chosen. Throughout this section, we will omit the dependency on $\theta$, since we assume that all the functions $f$, $E_r$, $E_\theta,$ and $B$ are rotationally symmetric, namely they are independent of $\theta$.

\subsection{Construction of the external magnetic potential}\label{sec.3.1}In this section, we first construct an infinite time-independent external magnetic potential $\psi_{base}=\psi_{base}(r)$ that confines the charged particles. Later, in Section \ref{sec.truncated}, by defining a \textit{moving bar} that increases in time, we will be able to truncate the infinite potential $\psi_{base}$ and construct a finite time-dependent external magnetic potential that also confines the charged particles.

As mentioned in Remark \ref{catch22}, our \textit{truncation} method in this section makes sense and works because we can prove that the a priori estimates on the self-consistent electromagnetic fields $E_r,$ $E_\theta$, and $B$ are independent of the external magnetic potential $\psi_{ext}$. Hence, we can also obtain the velocity bound \eqref{Vbound} independent of the external magnetic potential as in Lemma \ref{Vbound lemma} and this allows us to choose a finite barrier independent of the external potential, which will be used for the truncation.

To begin with, as we introduced in Hypothesis \ref{psibasegeneral}, the minimal sufficient conditions for the magnetic confinement that we require on the time-independent infinite potential $\psi_{base}$ are as follows; for a given distance $\delta\in (0,\delta_0)$ from the spatial boundary $\partial\Omega$, we assume
\begin{enumerate}
	\item $\psi_{base} \in C^2((r_1+\delta,r_2-\delta)).$
	\item $\psi_{base}$ satisfies
	\[	\lim_{r\to (r_1+\delta)^+} | \psi_{base}(r) | = \lim_{r\to (r_2-\delta)^-} | \psi_{base}(r) | = \infty .	\]
\end{enumerate}
We first show that the infinite external potential $\psi_{base}$ can be used to confine all the charged particles in the interior as in the following lemma:

\begin{lemma}\label{magconfi}
	Assume $E_r$, $E_\theta$, $B \in C^1([0,T]\times {[r_1,r_2] })$ satisfy Maxwell's equations \eqref{pMaxwell}. Suppose  \begin{equation}\label{boundsEB}
	|\vec{E}(t,r)|,\;|B(t,r)|\leq \tilde{C} e^{Ct},
	\end{equation} for any $(t,r)\in [0,T]\times (r_1,r_2) $ where $C$ and $\tilde{C}>0$ are the same functions defined as \eqref{tildeCdef} and \eqref{Cdef} in Proposition~\ref{aprioriestimatesEB}. Fix any $(r,p_r,p_\theta)\in  [r_1+\delta_0,r_2-\delta_0]\times \{p\in\rtw : |p|\leq M_0\}$ for some $\delta_0 \in \left(0,\frac{r_2-r_1}{2}\right)$ and $M_0>0$. Consider the characteristics $(s,R(s),P_r(s),P_\theta(s))\eqdef (s,R(s;0,r,p_r,p_\theta),P_r(s;0,r,p_r,p_\theta),P_\theta(s;0,r,p_r,p_\theta))$ of the system \eqref{pVlasov} corresponding to the point $(0,r,p_r,p_\theta)$ as the solutions of the system of ODEs \eqref{characteristics}. Suppose that the external magnetic field $B_{ext}$ is defined via a given time-independent potential $\psi_{base}(r)$ in Hypothesis \ref{psibasegeneral} as  $$B_{ext}=B_{base}(r)\eqdef \frac{1}{r}\frac{\partial(r\psi_{base}(r))}{\partial r},$$ where $\psi_{base}$ is defined as in Hypothesis \ref{psibasegeneral}. Then we have for any $\theta\in [-\pi,\pi )$,
	\begin{equation*}
	\mathrm{dist}(R(s)\hat{r} + \theta \hat{\theta} ,\partial\Omega) \geq \mathrm{dist}(U_{\delta_0}(s),\partial\Omega)>\delta >0.
	\end{equation*}	 for any $s\in [0,T]$, where $U_{\delta_0}(s)$ is defined as in \eqref{Udelta0}.
	
	Furthermore, if we define $\psi_{base}$ as  \eqref{psibasedef},  then we have for any $\theta\in [-\pi,\pi )$,
	\begin{equation}\label{magconfi2}
	\mathrm{dist}(R(s)\hat{r} + \theta \hat{\theta} ,\partial\Omega)\ge  \left(\frac{r_2-r_1}{\pi}\right)\arcsin(C_s)>\delta >0,
	\end{equation} for any $s\in [0,T]$ where $C_s$ is a positive constant for each fixed $s\in[0,T]$ which also depends on $\delta_0, \ r_1,\ r_2,\ C,\ \tilde{C}(s),$ and $ M_0$. 
\end{lemma}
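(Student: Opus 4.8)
The plan is to track a canonical angular momentum along the characteristics \eqref{characteristics} and to read off confinement from its boundedness. Writing the self-consistent field through its potential $\psi$ (so $B=\frac1r\partial_r(r\psi)$ with the normalization $\psi(t,r_m)=0$) and the external field through $\psi_{base}$ (so $B_{base}=\frac1r\partial_r(r\psi_{base})$), we have $\bar B=\frac1r\partial_r\big(r(\psi+\psi_{base})\big)$. Setting $\psi_{tot}=\psi+\psi_{base}$, I would compute the evolution of
\[
Q(s)\eqdef R(s)\Big(P_\theta(s)+\psi(s,R(s))+\psi_{base}(R(s))\Big).
\]
Using $\partial_r\psi_{tot}=\bar B-\psi_{tot}/r$, the terms carrying $\bar B$ (which contain the large external field) cancel identically against the magnetic part $-\hat P_r\bar B$ of $\dot P_\theta$, while the centrifugal/Coriolis contribution $-P_rP_\theta/(RP^0)=-\hat P_rP_\theta/R$ is absorbed by $\dot R\,P_\theta=\hat P_rP_\theta$. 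This is the crucial structural cancellation: the huge $B_{ext}$ disappears from $\dot Q$.

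The second key step is to identify the surviving source. Differentiating $\psi$ in time and using the axisymmetric Faraday law $\partial_tB=-\frac1r\partial_r(rE_\theta)$ together with $B=\frac1r\partial_r(r\psi)$ gives $\partial_r\big(r\partial_t\psi+rE_\theta\big)=0$; evaluating at $r=r_m$ with $\psi(t,r_m)=0$ fixes the integration constant and yields $E_\theta+\partial_t\psi=\frac{r_m}{r}E_\theta(t,r_m)$. Plugging this in, all remaining terms collapse to
\[
\frac{dQ}{ds}=r_m\,E_\theta(s,r_m),
\]
so $Q(s)=r\big(p_\theta+\psi(0,r)+\psi_{base}(r)\big)+\int_0^s r_mE_\theta(\tau,r_m)\,d\tau$.

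The third step extracts the confinement bound. Using $R(s)\ge r_1$ I would write $r_1|\psi_{base}(R(s))|\le|Q(s)|+R(s)|P_\theta(s)|+R(s)|\psi(s,R(s))|$ and bound each piece by data-dependent, $e^{Cs}$-type quantities: the field bounds \eqref{boundsEB} together with $r\psi=\int_{r_m}^r r'B\,dr'$ control $R|\psi|$ and $\int_0^s r_mE_\theta(\tau,r_m)\,d\tau$; the $B$-independent velocity bound \eqref{Vbound} of Lemma \ref{Vbound lemma} controls $R|P_\theta|\le r_2|P(s)|$; and $r|\psi_{base}(r)|\le r_2\max_{r\in[r_1+\delta_0,r_2-\delta_0]}|\psi_{base}(r)|$ handles the initial potential term since $r\in I_0$. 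Collecting the constants exactly as in \eqref{constant K} yields $|\psi_{base}(R(s))|\le \frac{r_2}{r_1}\max_{r\in[r_1+\delta_0,r_2-\delta_0]}|\psi_{base}(r)|+\frac{K}{r_1}e^{Cs}$, i.e. $R(s)\hat r\in U_{\delta_0}(s)$, whence $\mathrm{dist}(R(s)\hat r,\partial\Omega)\ge\mathrm{dist}(U_{\delta_0}(s),\partial\Omega)>\delta$ by Hypothesis \ref{psibasegeneral}. A short continuation argument closes the loop: the identity is a priori valid only while $R(s)$ stays in $(r_1+\delta,r_2-\delta)$, where $\psi_{base}$ and $B_{base}$ are $C^2$, but since the bound forces $|\psi_{base}(R(s))|$ to remain finite while $\psi_{base}\to\infty$ at $r_1+\delta$ and $r_2-\delta$, the trajectory cannot approach the singular set and therefore persists on all of $[0,T]$. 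For the explicit potential \eqref{psibasedef} I would simply invert the cosecant: the bound $\csc\!\big(\tfrac{\pi}{r_2-r_1}(R-r_1)\big)=\psi_{base}(R)+1\le C_s$ gives $\sin\!\big(\tfrac{\pi}{r_2-r_1}(R-r_1)\big)\ge 1/C_s$, and symmetry about $r_m$ promotes this to the two-sided distance estimate \eqref{magconfi2}.

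The main obstacle is not any single inequality but getting the algebra of the first two steps exactly right so that $B_{ext}$ cancels and the source reduces to the single, manifestly controllable term $r_mE_\theta(s,r_m)$; this is what makes a finite (rather than self-defeatingly large) external potential sufficient, and it is precisely the resolution of the \emph{catch-22} discussed in Remark \ref{catch22}. A secondary technical point is the careful bookkeeping of the $e^{Cs}$ factors and boundary contributions so that the aggregated constant is exactly the $K$ of \eqref{constant K}, thereby matching the definition of $U_{\delta_0}(s)$.
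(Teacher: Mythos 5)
Your proposal is correct and follows essentially the same route as the paper: you track the quantity $R(P_\theta+\psi+\psi_{base})$ along the characteristics, observe that the $\bar B$ and Coriolis contributions cancel, reduce the source to $r_m E_\theta(s,r_m)$ via Faraday's law and the normalization $\psi(t,r_m)=0$, and then bound each term using \eqref{boundsEB} and the velocity bound \eqref{Vbound} to land in $U_{\delta_0}(s)$. The only cosmetic differences are your explicit continuation remark (which the paper leaves implicit) and a slightly different splitting of the angular-momentum terms before applying Lemma \ref{Vbound lemma}.
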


\begin{remark}
	We remark that \eqref{boundsEB} is not an actual assumption. Indeed, \eqref{boundsEB} is just a direct consequence of Proposition~\ref{E1estimate} and Proposition~\ref{aprioriestimatesEB}.
\end{remark}
Before we prove Lemma \ref{magconfi}, we first introduce an estimate on the bound of the speed of propagation:
\begin{lemma} Assume \eqref{boundsEB}. Denote $P(0)$ as $P(0)=p$. Then we have \label{Vbound lemma}
\begin{equation}\label{Vbound}
\sup_{\tau\in [0,s]}|P(\tau)|\leq 2\tilde{C}(s)\frac{|e^{Cs}-1|}{C}+|p|,
\end{equation} where $C$ and $\tilde{C}(s)$ are the same functions defined as \eqref{tildeCdef} and \eqref{Cdef} in Proposition \ref{aprioriestimatesEB}.
\end{lemma}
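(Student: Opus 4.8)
The plan is to run an energy estimate directly on the kinetic momentum $|P(\tau)|^2 = P_r(\tau)^2 + P_\theta(\tau)^2$ along the characteristic flow, exploiting the fact that neither the magnetic force nor the geometric (centrifugal and Coriolis) terms in \eqref{characteristics} can change the speed of a particle. First I would differentiate $|P|^2$ in $s$ and substitute the second and third equations of \eqref{characteristics}, which yields
\begin{equation*}
\frac{1}{2}\frac{d}{ds}|P|^2 = P_r\frac{dP_r}{ds}+P_\theta\frac{dP_\theta}{ds} = P_rE_r+P_\theta E_\theta + (P_r\hat{P}_\theta - P_\theta\hat{P}_r)\bar{B} + \left(P_r\frac{P_\theta^2}{RP^0} - P_\theta\frac{P_rP_\theta}{RP^0}\right).
\end{equation*}
The key observation is that the last two groups of terms vanish identically: since $\hat{P}_r = P_r/P^0$ and $\hat{P}_\theta = P_\theta/P^0$, we have $P_r\hat{P}_\theta - P_\theta\hat{P}_r = 0$, so the full magnetic field $\bar{B} = B + B_{ext}$ — including the external contribution — drops out, and the centrifugal/Coriolis combination also cancels. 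This is exactly the mechanism emphasized in Remark \ref{catch22} that makes the velocity control independent of the external potential $\psi_{ext}$.

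Second, after this cancellation I am left with $\tfrac{1}{2}\frac{d}{ds}|P|^2 = \vec{P}\cdot\vec{E} \le |P|\,|\vec{E}|$, where $\vec{P} = (P_r,P_\theta)$ and $\vec{E}=(E_r,E_\theta)$ are evaluated at $(s,R(s))$. To convert this into a bound on $|P|$ itself while avoiding the zero set of $|P|$, I would work with the regularization $y_\epsilon \eqdef \sqrt{|P|^2+\epsilon^2}$, for which
\begin{equation*}
\frac{d}{ds}y_\epsilon = \frac{\vec{P}\cdot\vec{E}}{y_\epsilon} \le |\vec{E}|
\end{equation*}
holds for every $s$; letting $\epsilon\to 0$ after integrating recovers the desired estimate for $|P|$.

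Third, I would insert the field bound. By hypothesis \eqref{boundsEB} — which, as remarked, is merely Proposition \ref{aprioriestimatesEB} together with Proposition \ref{E1estimate} — we have $|\vec{E}(\tau,R(\tau))| \le \tilde{C}(s)\, e^{C\tau}$ for all $\tau\in[0,s]$, using the monotonicity of $\tilde{C}$ in its time argument. Integrating the differential inequality from $0$ to $\tau$ and using $P(0)=p$ gives
\begin{equation*}
|P(\tau)| \le |p| + \tilde{C}(s)\int_0^\tau e^{C\sigma}\,d\sigma = |p| + \tilde{C}(s)\frac{e^{C\tau}-1}{C} \le |p| + \tilde{C}(s)\frac{e^{Cs}-1}{C},
\end{equation*}
and taking the supremum over $\tau\in[0,s]$ yields \eqref{Vbound} (the factor of $2$ in the stated bound leaves room to spare).

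The main obstacle is conceptual rather than computational: one must recognize and verify the exact cancellation of both the magnetic-field term and the centrifugal/Coriolis terms in the energy identity for $|P|^2$. Without it, the external field $B_{ext}$ — which grows without bound near the boundary — would enter the estimate and destroy the uniform, $\psi_{ext}$-independent velocity bound on which the entire confinement scheme relies.
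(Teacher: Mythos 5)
Your proof is correct and follows essentially the same route as the paper: both hinge on the identity $\tfrac{d}{ds}|P|^2=2(P_rE_r+P_\theta E_\theta)$, in which the magnetic term $\bar{B}$ (including $B_{ext}$) and the centrifugal/Coriolis terms cancel exactly, followed by the field bound \eqref{boundsEB}. The only difference is cosmetic: the paper integrates the inequality for $|P|^2$ and closes a quadratic inequality in $\sup_{\tau}|P(\tau)|$, whereas you divide by the regularized $\sqrt{|P|^2+\epsilon^2}$ to get a linear differential inequality for $|P|$ directly, which in fact yields the slightly sharper constant $\tilde{C}(s)$ in place of $2\tilde{C}(s)$.
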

\begin{proof}
A direct computation yields
$$\frac{d}{ds}|P|^2=2\left(P_rE_r+P_r\hat{P}_\theta \bar{B}+\frac{P_rP_\theta^2}{RP^0}+P_\theta E_\theta-P_\theta \hat{P}_r\bar{B}-\frac{P_rP_\theta^2}{RP^0}\right)=2(P_rE_r+P_\theta E_\theta).$$ Then the bounds \eqref{boundsEB} further imply that $$|P(s)|^2\leq |p|^2+2\tilde{C}(s)\left|\int_0^s|P(\tau)|e^{C\tau}{\tiny }d\tau\right|
\leq |p|^2+2\tilde{C}(s)\sup_{\tau\in [0,s]}|P(\tau)| \frac{|e^{Cs}-1|}{C}.
$$
Therefore, we obtain the lemma.
\end{proof}

\begin{proof}[Proof of Lemma \ref{magconfi}]\Blue{Recall that we denote $P= (P_r,P_\theta)$ and $p= (p_r,p_\theta)$.}
Without loss of generality, suppose $r\geq r_m\eqdef \frac{r_1+r_2}{2}$; the case for $r< r_m\eqdef \frac{r_1+r_2}{2}$ will be similar, and left for the interested readers. 
\Blue{By \eqref{characteristics} and \eqref{magpotentialeq}}, we observe that
\begin{multline*}
\frac{dP_\theta}{ds}=E_\theta-\hat{P}_r\bar{B}-\frac{P_rP_\theta}{RP^0}=E_\theta-\frac{\hat{P}_r}{R}\frac{\partial(R(\psi+\psi_{base}))}{\partial R}-\frac{P_rP_\theta}{RP^0}\\
=E_\theta+\partial_t\psi-\frac{1}{R}\frac{d(R(\psi+\psi_{base}))}{ds}-\frac{P_rP_\theta}{RP^0}.
\end{multline*}Therefore, we have\begin{multline*}
\frac{d}{ds}\left(RP_\theta+R(\psi+\psi_{base})\right)=R\partial_t\psi+\dot{R}P_\theta+RE_\theta-\frac{P_rP_\theta}{P^0}
=R\partial_t\psi+RE_\theta.
\end{multline*}
Integrating with respect to $s$ over the time interval $[0,s]$, we have
\begin{multline}RP_\theta+R(\psi(s,R)+\psi_{base}(R))=rp_\theta+r(\psi(0,r)+\psi_{base}(r))\\
+\int_0^s\left(R(\tau)\partial_t\psi(\tau,R(\tau))+R(\tau)E_\theta(\tau,R(\tau))\right)d\tau.\end{multline}
\Blue{Also recall \eqref{potential assumption at mid} that we have assumed that $\psi(t,r_m)=0$ for all $t\in[0,T]$ and hence we have $\partial_t\psi(t,r_m)=0$.} Since
\begin{multline*}
R(\tau)\partial_t(\psi(\tau,R(\tau)))=\int_{r_m}^{R(\tau)} r'\partial_tB(\tau,r')dr'=\int_{r_m}^{R(\tau)} -\partial_{r'}(r'E_\theta(\tau,r'))dr'\\=r_mE_\theta(\tau,r_m)-R(\tau)E_\theta(\tau,R(\tau)),
\end{multline*}we have
\begin{multline}\label{Rpsiext difference}RP_\theta+R(\psi(s,R)+\psi_{base}(R))=rp_\theta+r(\psi(0,r)+\psi_{base}(r))\\
+\int_0^s\left(r_mE_\theta \right)(\tau,r_m)d\tau.\end{multline} Indeed, we can easily control the terms $r\psi(0,r)$ and $R\psi(s,R)$ by integrating \eqref{magpotentialeq} and using the hypothesis $\|B(\tau)\|_{L^\infty([r_1,r_2])}\leq \tilde{C}(\tau)e^{C\tau}$ as follows:
\begin{multline*}|R(\tau)\psi(\tau,R(\tau))|\leq \left|\int_{r_m}^{R(\tau)} yB(\tau,y)dy\right| \\ \leq e^{C\tau}\tilde{C}(\tau)\frac{|R(\tau)^2-r_m^2|}{2}\leq e^{C\tau}\tilde{C}(\tau)\frac{(r_2+r_m)(r_2-r_1)}{4},
\end{multline*} 
for both $\tau = 0$ and $\tau =s$ as we have \[
	|R(\tau)^2-r_m^2| = |R(\tau) + r_m|	\cdot |R(\tau) - r_m| \leq (r_2 + r_m) \cdot \min \left\{ |r - r_m| + \tau , \frac{r_2-r_1}{2} \right\},
\]
since $|R(\tau) - r_m| \leq \left\{ |r - r_m| + \tau , \frac{r_2-r_1}{2} \right\}$. 
Using \eqref{Vbound} and $|P_\theta-p_\theta|\leq |P|+|p|$, we also have 
\begin{multline*}|R-r||P_\theta|+ r|P_\theta-p_\theta|\leq (r_2-r_1)|P_\theta|+r_2(|P|+|p|)\\
\le (2r_2-r_1)\left(\frac{2\tilde{C}(s)}{C}(e^{Cs}-1)+|p|\right)+r_2|p|.\end{multline*}
Also, we observe that 
$$\left|\int_0^s\left(r_mE_\theta\right)(\tau,r_m)d\tau\right|\le \frac{\tilde{C}(s) r_m}{C}\left(e^{Cs}-1\right).$$
Thus, we use \eqref{Rpsiext difference} and obtain \begin{multline}\label{Rpsiext diff 3}
|R(s)\psi_{base}(R(s))|\le |r\psi_{base}(r)|+e^{Cs}\tilde{C}(s)\frac{(r_2+r_m)(r_2-r_1)}{2}\\+(2r_2-r_1)\left(\frac{2\tilde{C}(s)}{C}(e^{Cs}-1)+|p|\right) +r_2|p|+\frac{\tilde{C}(s)r_m}{C}(e^{Cs}-1)\\
\le |r\psi_{base}(r)|+Ke^{Cs},
\end{multline}
for any $s\in [0,T]$ for some constant $K=K(C,\tilde{C}(s),r_1,r_2,M_0)>0$ that is defined as
\begin{multline*}
    K(C,\tilde{C}(s),r_1,r_2,M_0)
\\
\eqdef \frac{\tilde{C}(s)}{2}(r_2+r_m)(r_2-r_1)+(2r_2-r_1)\left(\frac{2\tilde{C}(s)}{C}+M_0\right)   +r_2M_0+\frac{\tilde{C}(s)r_m}{C},
\end{multline*}
since $|p|\le M_0$. 
Therefore, for any $s\in [0,T]$,
$$|R(s)\psi_{base}(R(s))|\le |r\psi_{base}(r)|+Ke^{Cs} \le r_2\max_{r\in [r_1+\delta_0,r_2-\delta_0]}|\psi_{base}(r)|+ Ke^{Cs},
$$
and hence,
\begin{equation}\label{e:Upper_Bdd_for_psi(R)}
	|\psi_{base}(R(s))|\le \frac{r_2}{r_1} \left( \max_{r\in [r_1+\delta_0,r_2-\delta_0]}|\psi_{base}(r)| \right) + \frac{K}{r_1} e^{Cs}.
\end{equation}
It follows from Hypothesis \ref{psibasegeneral} that the set
\begin{equation*}
U_{\delta_0}(s) \eqdef \left\{ x \in \Omega; \ |\psi_{base}(x)| \leq \frac{r_2}{r_1} \left( \max_{r\in [r_1+\delta_0,r_2-\delta_0]}|\psi_{base}(r)| \right) + \frac{K}{r_1} e^{Cs} \right\}
\end{equation*}
is a compact and proper subset of the open domain $\Omega$; in addition, we have $\mathrm{dist}(U_{\delta_0}(s),\partial\Omega)>\delta$ by Hypothesis \ref{psibasegeneral}. Thus, the inequality~\eqref{e:Upper_Bdd_for_psi(R)} implies that for any $\theta\in [-\pi,\pi )$,
\begin{equation*}
	\mathrm{dist}(R(s)\hat{r} + \theta \hat{\theta} ,\partial\Omega) \geq \mathrm{dist}(U_{\delta_0}(s),\partial\Omega)>\delta >0.
\end{equation*}

In addition, if we assume that $\psi_{base}$ is in the explicit form of \eqref{psibasedef}, then we further have that 
\begin{multline}\label{Csdef}
\csc \left(\frac{\pi}{r_2-r_1}(R(s)-r_1)\right)\le1+\frac{r}{r_1}\left| \csc \left(\frac{\pi}{r_2-r_1}(r-r_1)\right)-1\right|+ \frac{K}{r_1}e^{Cs}\\ \le 1+\frac{r_2}{r_1}\left| \csc \left(\frac{\pi}{r_2-r_1}(r_2-r_1-\delta_0)\right)-1\right|+ \frac{K}{r_1}e^{Cs}=: C_s,
\end{multline} 
by \eqref{psibasedef} and that the initial distribution is supported only on $[r_1+\delta_0, r_2-\delta_0]$.
Thus we have
$$ r_1+ \left(\frac{r_2-r_1}{\pi}\right)\arcsin(C_s)\le R(s)\le r_2 - \left(\frac{r_2-r_1}{\pi}\right)\arcsin(C_s),$$for all $s\in [0,T]$. This completes the proof for this lemma. 
%
\end{proof} 

Using this lemma, we will construct a \textit{moving bar} $L_{bar}(s)$ that will be used for the truncation of the infinite potential in the next section.

\subsection{Truncated time-dependent external potential}\label{sec.truncated}Equipped with the magnetic confinement via the time-independent external magnetic potential $\psi_{base}=\psi_{base}(r)$ that is infinite at the boundary $r=r_1$ and $r=r_2$, we can now define a moving bar $L_{bar}(s)$ that is an increasing function $L_{bar}:[0,T]\rightarrow [0,\infty)$. The moving bar $L_{bar}(s)$ physically stands for the maximum level of the external potential that we need to impose so that we can confine all the particle trajectory $R(s)$ in the interior domain whose initial state $(R(0),P(0))$ is in the support of $f_0$.  We will use this to truncate the infinite potential $\psi_{base}$ and construct a finite time-dependent external magnetic field that also confines the particles in the interior. The key strategy is to find a moving bar which increases fast enough with respect to time $s\in [0,T]$, so that the truncated potential will still confine all the particles.

For the general form of the external potential $\psi_{base}$ from Hypothesis \ref{psibasegeneral}, we define $L_{bar}(s)$ as 
\begin{equation}\begin{split}
L_{bar}(s)&\eqdef \max_{x\in U_{\delta_0}(s)} |\psi_{base}(x)|,\end{split}
\end{equation}
where $U_{\delta_0}(s)$ is defined as in \eqref{Udelta0}.
Then we define the finite time-dependent external magnetic potential $\psi_{ext}=\psi_{ext}(s,r)$ for $s\in [0,T]$ \Blue{as in \eqref{finitepsiext}. }
Note that as long as $
\psi_{base}(r)\le L_{bar}(s)$, $\psi_{ext}(s,r)$ is equal to $\psi_{base}(r)$, which is time-independent. Therefore, if we consider a particle whose initial state $(R(0),P(0))$ is in the support of $f_0$, then we always have $\psi_{base}(R(s))\le L_{bar}(s)$ for all $s\geq 0$, due to the construction of $L_{bar}$. Therefore, as long as the particle trajectory starts at $(R(0),P(0))$ in the support of $f_0$, we always control the trajectory $(R(s),P(s))$ of the particle via the time-independent potential $\psi_{ext}(s,R(s))= \psi_{base}(R(s))$. Therefore, we can replace $\psi_{base}(R(s))$ by $\psi_{ext}(s,R(s))$ in the proof of Lemma \ref{magconfi}, since $\psi_{ext}(s,R(s))=\psi_{base}(R(s))$ always hold throughout the whole proof.
This completes the proof for Theorem~\ref{maintheorem2}.

The external potential $\psi_{base}$ can be chosen explicitly, such as the form in \eqref{psibasedef};  see Remark \ref{remark.explicit} for more details.
\subsection{A unique global trajectory}
Lemma \ref{magconfi} further implies that any particles which are initially away from the boundary can never reach the spatial boundary. Therefore, we obtain the following corollary on the unique trajectory:
\begin{corollary}\label{uniquetraj}
	Assume $E_r$, $E_\theta$, $B \in C^1([0,T]\times {[r_1,r_2] })$ satisfy Maxwell's equations~\eqref{pMaxwell}. Suppose \begin{equation*}
	|\vec{E}(t,r)|,\; |B(t,r)|\leq \tilde{C} e^{Ct},
	\end{equation*} for all $(t,r)\in [0,T]\times (r_1,r_2)$ where the functions $C$ and $\tilde{C}>0$ are defined as \eqref{tildeCdef} and \eqref{Cdef}. Then for any fixed $(r,p)\in   [r_1+\delta_0,r_2-\delta_0]\times \{|p|\leq M_0\}$ for some $\delta_0 \in \left(0,\frac{r_2-r_1}{2}\right)$ and $M_0>0$, the characteristic ODEs \eqref{characteristics} admits a unique $C^1$ solution $(R(s),P(s))$ in $[0,T]$ with $R(s)\in  (r_1,r_2)$ for all $s\in [0,T]$.
\end{corollary}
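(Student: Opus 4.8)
The plan is to treat Corollary \ref{uniquetraj} as a standard global existence-and-uniqueness statement for the characteristic system \eqref{characteristics}, in which the two a priori bounds already in hand---the confinement estimate of Lemma \ref{magconfi} and the velocity bound of Lemma \ref{Vbound lemma}---serve precisely to rule out the two possible mechanisms of breakdown. First I would record that on the open region
\[
\mathcal{D} \eqdef \left\{ (s,R,P_r,P_\theta) \in [0,T]\times (r_1,r_2)\times \rtw \right\},
\]
the right-hand side of \eqref{characteristics} is continuous in $s$ and $C^1$ in $(R,P_r,P_\theta)$. Indeed $E_r,E_\theta,B\in C^1([0,T]\times [r_1,r_2])$ by hypothesis; $B_{ext}$ is $C^1$ in $R$ on $(r_1,r_2)$ since $\psi_{ext}$ is constructed to be smooth and coincides with the $C^2$ function $\psi_{base}$ on the relevant range; and the geometric terms $\frac{P_\theta^2}{RP^0}$, $\frac{P_rP_\theta}{RP^0}$ together with $\hat{P}_r,\hat{P}_\theta$ are smooth functions of $(R,P_r,P_\theta)$ on $\mathcal{D}$, because $P^0=\sqrt{1+P_r^2+P_\theta^2}\ge 1$ never vanishes and $R\ge r_1>0$ keeps the denominators bounded away from zero. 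Hence the velocity field is locally Lipschitz on $\mathcal{D}$, and the Picard--Lindel\"of theorem yields a unique $C^1$ solution on a maximal interval $[0,s_*)\subseteq [0,T]$ emanating from $(R(0),P(0))=(r,p)$.

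Next I would invoke the standard continuation principle: if $s_*<T$, then as $s\uparrow s_*$ the solution must approach the boundary of $\mathcal{D}$, meaning either $R(s)$ tends to $r_1$ or $r_2$, or $|P(s)|\to\infty$. I would then exclude both alternatives. The confinement estimate of Lemma \ref{magconfi} gives $\mathrm{dist}(R(s)\hat{r}+\theta\hat{\theta},\partial\Omega)>\delta>0$ for all $s$ in the interval of existence, equivalently $R(s)\in [r_1+\delta,r_2-\delta]\subset (r_1,r_2)$, so $R(s)$ cannot reach the spatial boundary. The velocity bound of Lemma \ref{Vbound lemma} gives
\[
\sup_{\tau\in [0,s_*)}|P(\tau)| \le 2\tilde{C}(s_*)\frac{|e^{Cs_*}-1|}{C}+|p| <\infty,
\]
so $|P(s)|$ stays bounded. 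Thus the trajectory remains in a fixed compact subset of $\mathcal{D}$ up to time $s_*$, contradicting the continuation principle unless $s_*=T$; a final application of local existence at $s=T$ then extends the solution to the closed interval $[0,T]$ with $R(s)\in (r_1,r_2)$ throughout, and local uniqueness propagates to global uniqueness on $[0,T]$.

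The only point requiring genuine care---the main obstacle, though it is a mild one---is to verify that Lemmas \ref{magconfi} and \ref{Vbound lemma} are legitimately applicable here, that is, that they are a priori estimates valid along \emph{any} solution for as long as it exists, rather than statements that silently presuppose global existence. Both are obtained by integrating along the characteristics over $[0,s]$ and invoking only the field bounds \eqref{boundsEB}, so they are valid on the maximal interval $[0,s_*)$ and deliver exactly the compactness in $(R,P)$ needed to close the continuation argument. Everything else is the routine Cauchy--Lipschitz machinery, so no further estimates beyond those already established in the excerpt are required.
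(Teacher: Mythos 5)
Your proposal is correct and follows essentially the same route as the paper: the paper states Corollary \ref{uniquetraj} as an immediate consequence of Lemma \ref{magconfi} (no boundary contact) together with the standard Cauchy--Lipschitz local theory and continuation, with Lemma \ref{Vbound lemma} ruling out momentum blow-up. Your write-up merely makes explicit the continuation principle and the local Lipschitz character of the vector field (including the observation that the confinement keeps $R(s)$ in the region where $\psi_{base}$ is finite and $C^2$), which is exactly the argument the paper leaves implicit.
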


\section{Propagation of the \texorpdfstring{$L^\infty$}{} moment and the bounds for the momentum support} \label{Linftysection}
This section is devoted to proving the propagation of the $L^\infty$ moment of the distribution function $f$ and proving that the particle distribution has a compact support in $p$ variable within any finite time interval. In order to prove the global existence and the uniqueness of a classical solution, we need to prove the a priori $L^\infty$ estimate for the distribution solution $f$ and the derivatives of the fields $E$ and $B$ and the distribution $f$. The $L^\infty$ estimate on $f$ will be given in this section, and the estimates on the derivatives will be given in Section \ref{sec:derivatives}. This can be shown as a consequence of the magnetic confinement and the uniqueness of the characteristic trajectory.

We first suppose $(f,E_r,E_\theta,B)$ is a $C^1$ solution to \eqref{pVlasov}. Then another direct consequence of Corollary \ref{uniquetraj} is that the solution $f$ to \eqref{pVlasov} is constant along the unique characteristic trajectory. Therefore, we obtain
\begin{equation}\label{propagationLinfty}
	\|f\|_{L^\infty([0,T]\times {[r_1,r_2] }\times\rtw)}=\|f^0\|_{L^\infty( {[r_1,r_2] }\times\rtw)}.
\end{equation}
In addition, we can also prove that the solution $f(t,r,p_r,p_\theta)$ to the system \eqref{pVlasov} has a compact support in the $p$ variables within any finite time interval.
Define $M(t)$ as follows:
\begin{definition}\label{M.def}Let $(f,E_r,E_\theta,B)$ be a $C^1$ solution to \eqref{pVlasov}. For each $t\in [0,T]$, define the maximum radius of the momentum support $M(t)$ as 
$$M(t)=\max_{p\in \mathrm{supp}_{p}(f)(t)}|p| ,$$ where $$\mathrm{supp}_{p}(f)(t)\eqdef \{p\in \mathbb{R}^2\  |\  f(t,r,p_r,p_\theta)\neq 0,  \text{ for some } r\in[r_1,r_2]\}.$$
\end{definition} Then we have the following estimate:

\begin{lemma}\label{momsupp}
Suppose that $\mathrm{supp} (f^0)\in [r_1+\delta_0,r_2-\delta_0]\times \{|p|\leq M_0\}$ for some $\delta_0\in \left(0,\frac{r_2-r_1}{2}\right)$ and $M_0>0$. Then we have 
$$ M(t)\leq M_0+4\tilde{C}\frac{e^{Ct}}{C},$$
where $C$ and $\tilde{C}$ are the same functions defined as \eqref{tildeCdef} and \eqref{Cdef} obtained in Proposition \ref{aprioriestimatesEB}.
\end{lemma}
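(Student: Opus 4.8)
The plan is to combine the transport structure of $f$ along characteristics with the velocity bound from Lemma \ref{Vbound lemma}. First I would recall that, by Corollary \ref{uniquetraj}, for every initial point $(r,p)$ with $r\in[r_1+\delta_0,r_2-\delta_0]$ and $|p|\le M_0$ the characteristic system \eqref{characteristics} admits a unique $C^1$ solution $(R(s),P(s))$ that remains in the interior $(r_1,r_2)$ for all $s\in[0,T]$. Since $f$ solves the Vlasov equation \eqref{pVlasov}, it is constant along these characteristics; consequently the phase-space support of $f(t,\cdot,\cdot)$ is precisely the image of $\mathrm{supp}(f^0)$ under the characteristic flow up to time $t$.

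Next I would translate this into a statement about $M(t)$. Fix $t\in[0,T]$ and choose any $p\in\mathrm{supp}_p(f)(t)$ realizing the maximum, so that $|p|=M(t)$. By the preceding paragraph there is an initial datum $(r_0,p_0)\in\mathrm{supp}(f^0)$, hence with $|p_0|\le M_0$, whose trajectory satisfies $P(t)=p$. The task is thereby reduced to bounding $|P(t)|$ along a single characteristic.

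For this I would simply invoke Lemma \ref{Vbound lemma} with $s=t$ and $P(0)=p_0$, which yields
\begin{equation*}
|P(t)|\le \sup_{\tau\in[0,t]}|P(\tau)|\le 2\tilde{C}(t)\frac{|e^{Ct}-1|}{C}+|p_0|\le 2\tilde{C}\frac{e^{Ct}-1}{C}+M_0.
\end{equation*}
Since $2\frac{e^{Ct}-1}{C}\le 4\frac{e^{Ct}}{C}$, taking the supremum over $p\in\mathrm{supp}_p(f)(t)$ gives $M(t)\le M_0+4\tilde{C}\frac{e^{Ct}}{C}$, as claimed.

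As for the main obstacle: the genuinely substantive input is not the final estimate, which is a one-line consequence of the velocity bound, but rather the fact that the trajectories emanating from $\mathrm{supp}(f^0)$ never reach $\partial\Omega$ and therefore exist globally on $[0,T]$. This is exactly what Lemma \ref{magconfi} and Corollary \ref{uniquetraj} furnish, and it is what legitimizes the transport representation of the support; everything else is bookkeeping with the constants $C$ and $\tilde{C}$ from Proposition \ref{aprioriestimatesEB}.
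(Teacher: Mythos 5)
Your proof is correct and takes essentially the same route as the paper: the paper likewise reduces the claim to a bound on $|P(t)|$ along the confined characteristics, though it re-derives that bound from $\frac{d}{ds}|P|^2=2(P_rE_r+P_\theta E_\theta)$ together with the field estimates rather than quoting \eqref{Vbound} directly. Your direct appeal to Lemma \ref{Vbound lemma} is a harmless (in fact slightly sharper) shortcut that lands on the same constant $M_0+4\tilde{C}\frac{e^{Ct}}{C}$.
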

\begin{proof}

By Lemma \ref{Vbound lemma}, it follows from a direct computation that $$\frac{d}{ds}|P(s)|^2=2(P_rE_r+P_\theta E_\theta),$$ 
so using Proposition~\ref{E1estimate}, Proposition~\ref{aprioriestimatesEB} and \eqref{Vbound}, we have 
$$|P(t)|^2\leq |P(0)|^2+2\tilde{C}\int_0^t |P(\tau)|e^{C\tau}d\tau\leq |P(0)|^2+2\tilde{C} (2\tilde{C}\frac{e^{Ct}}{C}+|p|)\frac{e^{Ct}}{C}.$$ Thus, $$|P(t)|\leq |P(0)|+4\tilde{C}\frac{e^{Ct}}{C}.$$Since we have assumed that $\mathrm{supp} (f^0)\in [r_1+\delta_0,r_2-\delta_0]\times \{|p|\leq M_0\}$ for some $\delta_0,M_0>0$, we have 
$$M(t) \leq M_0+4\tilde{C}\frac{e^{Ct}}{C}.$$
This completes the proof.
\end{proof}

\subsection{\texorpdfstring{$L^\infty$}{} bounds for the density and the flow} Finally, it is worthwhile to mention that the finite momentum support of the particle distribution implies the following $L^\infty$ bounds on the charge and current densities as well:
\begin{corollary}\label{rhojbound}
	We have \begin{multline*}\|\rho\|_{L^\infty([0,T]\times{[r_1,r_2] })},\ \|j\|_{L^\infty([0,T]\times{[r_1,r_2] })}\\
	\leq \pi \|f^0\|_{L^\infty({[r_1,r_2] }\times \rtw)}\left(M_0+4\tilde{C}\frac{e^{Ct}}{C}\right)^2.\end{multline*}
\end{corollary}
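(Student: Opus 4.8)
The plan is to combine the $L^\infty$ propagation of $f$ in \eqref{propagationLinfty} with the momentum-support bound from Lemma \ref{momsupp}, thereby reducing the estimate to a trivial integration over a disk in momentum space. The key observation is that $\rho$ and $\vec{j}$ are both velocity-moments of $f$ against kernels that are bounded by $1$, so the only two inputs needed are the sup-norm of $f$ and the Lebesgue measure of its momentum support.

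First I would recall that, since $f$ is constant along the unique characteristic trajectories (Corollary \ref{uniquetraj}), the identity \eqref{propagationLinfty} gives $\|f(t)\|_{L^\infty} = \|f^0\|_{L^\infty}$ for every $t\in[0,T]$. Next, by Definition \ref{M.def}, for each fixed $(t,r)$ the integrand $f(t,r,p_r,p_\theta)$ is supported in the disk $\{p\in\rtw : |p|\le M(t)\}$, whose area is $\pi M(t)^2$. Estimating the integrand by its sup-norm and the domain by its area therefore yields
\begin{equation*}
|\rho(t,r)| = \left| \int_\rtw f(t,r,p_r,p_\theta)\, dp_r\, dp_\theta \right| \le \|f^0\|_{L^\infty({[r_1,r_2] }\times\rtw)}\, \pi M(t)^2 .
\end{equation*}

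For the current density I would use that the relativistic velocity satisfies $|\hat{p}| = |p|/p^0 \le 1$, since $p^0 = \sqrt{1+p_r^2+p_\theta^2} \ge \sqrt{p_r^2+p_\theta^2} = |p|$. Viewing $\vec{j} = \int_\rtw \hat{p}\, f\, dp_r\, dp_\theta$ as a vector, the triangle inequality then gives $|\vec{j}(t,r)| \le \int_\rtw |\hat{p}|\, f\, dp_r\, dp_\theta \le \int_\rtw f\, dp_r\, dp_\theta = \rho(t,r)$, so $|\vec{j}|$ inherits exactly the same bound as $\rho$. Finally, substituting the momentum-support estimate $M(t) \le M_0 + 4\tilde{C}\,e^{Ct}/C$ from Lemma \ref{momsupp} into the two preceding inequalities produces the claimed bound.

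I do not expect a genuine obstacle here, as the statement is a direct bookkeeping consequence of the two previously established facts. The only points requiring a little care are the constant and the kernel bound: one must note that the disk area contributes precisely the factor $\pi$, and that the relativistic velocities $\hat{p}_r,\hat{p}_\theta$ are bounded by $1$ rather than by $M(t)$. It is exactly this latter uniform bound on $|\hat p|$ that keeps the growth of $\|\vec j\|$ quadratic in $M(t)$, matching that of $\|\rho\|$, rather than introducing an extra power of $M(t)$.
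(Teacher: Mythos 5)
Your proposal is correct and follows essentially the same route as the paper: both use the $L^\infty$ propagation \eqref{propagationLinfty}, the momentum-support bound of Lemma \ref{momsupp}, and the pointwise bound $|\hat{p}|\le 1$ to reduce the estimate to the area $\pi M(t)^2$ of the momentum-support disk. Your explicit remark that $|\hat p|\le 1$ prevents an extra power of $M(t)$ in the current bound is a nice clarification of a step the paper performs implicitly.
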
\begin{proof}
Note that 
\begin{multline*}\rho(t,x)=\int_\rtw f(t,x,p)dp=\int_{|p|\leq M_0+4\tilde{C}\frac{e^{Ct}}{C}}f(t,x,p)dp\\\leq\pi\|f^0\|_{L^\infty({[r_1,r_2] }\times \rtw)}\left(M_0+4\tilde{C}\frac{e^{Ct}}{C}\right)^2.\end{multline*}Similarly, we have
\begin{multline*}|j(t,x)|\leq\int_\rtw |\hat{p}f(t,x,p)|dp\leq \int_{|p|\leq M_0+4\tilde{C}\frac{e^{Ct}}{C}}|f(t,x,p)|dp\\
\leq\pi\|f^0\|_{L^\infty({[r_1,r_2] }\times \rtw)}\left(M_0+4\tilde{C}\frac{e^{Ct}}{C}\right)^2.\end{multline*}
This completes the proof.
\end{proof}

\section{Estimates for the derivatives}\label{sec:derivatives}
This section is devoted to a priori $L^\infty$ estimates on the derivatives of the fields $(E_r,E_\theta,B)$ and the distribution $f$.
\subsection{Derivatives of \texorpdfstring{$E$}{} and \texorpdfstring{$B$}{}}
We begin with the field $E_r$. Since $\frac{1}{r}\partial_r(rE_r)=\rho,$ we have
$$\|\partial_r (rE_r)\|_{L^\infty([0,T]\times {[r_1,r_2] })}\leq  \pi r_2\|f^0\|_{L^\infty({[r_1,r_2] }\times \rtw)}\left(M_0+4\tilde{C}\frac{e^{Ct}}{C}\right)^2,$$ by Corollary \ref{rhojbound}.

Now recall that in Section~\ref{estiEthetaB} we have defined $P_\pm=rE_\theta\pm rB.$ 
 The rest of this section is devoted to showing the estimates on the derivatives.
Recall that, by Lemma \ref{momsupp}, $f$ has a compact support in $p$ if $f_0$ does. Then, for the confined solutions, we have the following lemma:
 \begin{lemma}Suppose that $f^0$ is supported in $[r_1+\delta_0,r_2-\delta_0]\times \{|p|\leq M_0\}$ for some $\delta_0>0$ and $M_0>0$. Then for the confined solution, we have $$\|\partial_r (rE_\theta)\|_{L^\infty([0,T]\times{[r_1,r_2] })},\|\partial_r (rB)\|_{L^\infty([0,T]\times{[r_1,r_2] })}\leq C_T,$$ for some constant $C_T>0$ which depends only on $M_0$, $T$, $\lambda$, $\|f^0\|_{L^\infty}$, the $C^1$ norm of $E_\theta^0$, $B^0$ on ${[r_1,r_2] }$, and the $C^1$ norm of $E^b_\theta$, $B^b$ on $[0,T]$.
 \end{lemma}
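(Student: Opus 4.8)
The plan is to reduce everything to the modified fields $P_\pm\eqdef rE_\theta\pm rB$ from Section~\ref{estiEthetaB} and to estimate their \emph{time} derivatives rather than their spatial derivatives. The point is that \eqref{sum1}--\eqref{sum2} give the algebraic identities $\partial_r(rE_\theta)=-r\,\partial_t B$ and $\partial_r(rB)=B-rj_\theta-r\,\partial_t E_\theta$. Writing $\partial_t B=\tfrac{1}{2r}(\partial_t P_+-\partial_t P_-)$ and $\partial_t E_\theta=\tfrac{1}{2r}(\partial_t P_++\partial_t P_-)$, it therefore suffices to bound $\partial_t P_+$ and $\partial_t P_-$ in $L^\infty$, since the leftover terms $B$ and $rj_\theta$ are already controlled by Proposition~\ref{aprioriestimatesEB} and Corollary~\ref{rhojbound}. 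Differentiating the transport equation \eqref{transport} in $t$ yields $(\partial_t\pm\partial_r)(\partial_t P_\pm)=\partial_t B-r\,\partial_t j_\theta$, which I would integrate along the characteristics $\tau\mapsto r\mp(t-\tau)$, reflecting off $r=r_1,r_2$ exactly as in Lemma~\ref{Pplus lemma}. At each reflection and on the initial line the non-integral contributions are boundary/initial values of $\partial_t P_\pm$, namely $r_i(\partial_t E_\theta^b\pm\partial_t B^b)$ and, at $t=0$, the quantity $\mp\partial_r P_\pm^0+(B-rj_\theta)|_{t=0}$ read off from the equation itself; all of these are controlled by the prescribed $C^1$ norms of $E_\theta^0,B^0,E_\theta^b,B^b$ (recall $B^b$ is determined through Lemma~\ref{Pplus lemma}).

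The feedback term $\partial_t B=\tfrac{1}{2r}(\partial_t P_+-\partial_t P_-)$ enters the source linearly with coefficient bounded by $\tfrac{1}{2r_1}$ and will be absorbed by Gr\"onwall. The genuinely delicate term is
\[
r\,\partial_t j_\theta=r\int_{\rtw}\hat p_\theta\,\partial_t f\,dp_rdp_\theta .
\]
Here I substitute the Vlasov equation \eqref{pVlasov} for $\partial_t f$; since $f$ is $\theta$-independent this gives $\partial_t f=-\hat p_r\,\partial_r f-F\cdot\nabla_p f$, where $F$ is the force field in \eqref{pVlasov}. The $F\cdot\nabla_p f$ contribution is integrated by parts in $p$: because the electromagnetic factors $E_r,E_\theta,\bar B$ in $F$ are independent of $p$, this produces only $f$ times ($p$-derivatives of) bounded functions of $\hat p$ and of the fields, so that \emph{no derivatives of the fields appear}; the boundary terms vanish since $f$ has compact momentum support by Lemma~\ref{momsupp}.

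The remaining piece $\int\hat p_\theta\hat p_r\,\partial_r f\,dp$ still carries a spatial derivative of $f$, and this is where I expect the main obstacle to lie. Rather than invoke the Glassey--Strauss gradient decomposition, I would exploit that along the $+$ field characteristic (speed $1$) one has $(1-\hat p_r)\,\partial_r f=\tfrac{d}{d\tau}f-Tf=\tfrac{d}{d\tau}f+F\cdot\nabla_p f$, where $T=\partial_t+\hat p_r\partial_r$ is the Vlasov transport operator and $\tfrac{d}{d\tau}$ is the derivative along the characteristic (the $P_-$ case is identical with $1+\hat p_r$). By Lemma~\ref{momsupp} the momentum support radius $M(\tau)$ is finite on $[0,T]$, so $|\hat p_r|\le M(\tau)/\sqrt{1+M(\tau)^2}<1$ on $\mathrm{supp}\,f$ and hence $(1\mp\hat p_r)^{-1}$ is bounded on $[0,T]$; this is precisely the mechanism that removes the usual $\tfrac{1}{1-\hat p\cdot\omega}$ singularity. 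After substitution, the $\tfrac{d}{d\tau}f$ part is a total derivative along the characteristic whose $\tau$-integral telescopes to endpoint values $\int\tfrac{\hat p_\theta\hat p_r}{1\mp\hat p_r}\,[f]\,dp$ (controlled by $\|f^0\|_{L^\infty}$ and $M(\tau)$), while the $F\cdot\nabla_p f$ part is again integrated by parts in $p$. At this step $\bar B=B+B_{ext}$ must be bounded wherever $f\neq0$: since $f(\tau,\cdot)$ is supported in $\{r_1+\delta\le r\le r_2-\delta\}$ by the confinement (Lemma~\ref{magconfi} and Theorem~\ref{maintheorem2}), there one has $\psi_{ext}(\tau,\cdot)=\psi_{base}(\cdot)$, which is time-independent and $C^2$, so $B_{ext}$ in \eqref{Bextdef} is bounded by a constant depending only on $\psi_{base}$ and $r_1,r_2,\delta$, uniformly in $\tau$.

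Assembling these estimates, $\partial_t(rj_\theta)$ is bounded purely in terms of $\|f^0\|_{L^\infty}$, $M(\tau)$, the field bounds of Proposition~\ref{aprioriestimatesEB}, and the confinement bound on $B_{ext}$, with no field derivatives and no other derivatives of $f$. Feeding this into the characteristic representation for $\partial_t P_\pm$ and setting $Y(t)\eqdef\sup_{r\in[r_1,r_2]}(|\partial_t P_+|+|\partial_t P_-|)(t,r)$, one arrives at an inequality of the form $Y(t)\le A(t)+\tfrac{1}{r_1}\int_0^t Y(\tau)\,d\tau$, where $A(t)$ collects the controlled quantities and grows at most like a polynomial in $t$ times $e^{Ct}$ (the number of reflections up to time $t$ being $\lceil t/(r_2-r_1)\rceil$). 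Gr\"onwall's inequality then yields $Y(t)\le C_T$ on $[0,T]$, and translating back via $\partial_r(rE_\theta)=-\tfrac12(\partial_t P_+-\partial_t P_-)$ and $\partial_r(rB)=B-rj_\theta-\tfrac12(\partial_t P_++\partial_t P_-)$ gives the claimed bounds with the stated dependence of $C_T$. The entire difficulty concentrates in the single term $\int\hat p_\theta\hat p_r\,\partial_r f\,dp$, whose treatment rests on combining the compact momentum support (to invert the transport operator) with the confinement (to keep $B_{ext}$ finite on $\mathrm{supp}\,f$).
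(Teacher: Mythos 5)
Your argument is correct, and at its core it coincides with the paper's: both proofs ultimately reduce the matter to a momentum integral of $\partial_r f$ against a bounded weight and resolve it with the identical Glassey--Schaeffer splitting $\partial_r=\frac{T_\pm\mp S}{1\mp\hat{p}_r}$ (with $T_\pm=\partial_t\pm\partial_r$ and $S=\partial_t+\hat{p}_r\partial_r$), the substitution $Sf=-F\cdot\nabla_pf$ (with $F$ the force field in \eqref{pVlasov}), an integration by parts in $p$, and the lower bound $1\mp\hat{p}_r\ge c(T,M_0)>0$ supplied by Lemma~\ref{momsupp}. What differs is the outer shell. The paper applies $\partial_r$ directly to the Duhamel formulas \eqref{P+}--\eqref{P-} and converts the resulting $\int\partial_r B$ into $-\int\partial_t E_\theta-\int j_\theta$ via Amp\`ere's law; you instead estimate $\partial_t P_\pm$ along the same reflected characteristics and recover $\partial_r(rE_\theta)=-r\partial_tB$ and $\partial_r(rB)=B-rj_\theta-r\partial_tE_\theta$ algebraically from \eqref{sum1}--\eqref{sum2} at the very end. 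Since $\partial_tP_\pm=\mp\partial_rP_\pm+(B-rj_\theta)$ pointwise, the two routes are essentially interchangeable; yours has the mild advantage that the data feeding the Gr\"onwall loop ($\partial_tE_\theta^b$, $\partial_tB^b$, and $\partial_rP_\pm|_{t=0}$) are exactly the $C^1$ norms listed in the statement, whereas the paper packages the analogous quantities into $Q(t,r)$. Three points deserve care. First, differentiating the transport equation \eqref{transport} in $t$ formally uses one more derivative than a $C^1$ solution possesses; for an a priori estimate this is harmless, or you may instead apply $\partial_t$ to \eqref{P+}--\eqref{P-} via the Leibniz rule, exactly as the paper applies $\partial_r$. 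Second, in your telescoping step the weight $\frac{r'\hat{p}_\theta\hat{p}_r}{1\mp\hat{p}_r}$ carries the factor $r'=r\mp(t-\tau)$, which depends on $\tau$, so the total derivative along the field characteristic produces one additional term proportional to $\int\frac{\hat{p}_\theta\hat{p}_r}{1\mp\hat{p}_r}f\,dp$; it is controlled by $\|f^0\|_{L^\infty}$ and the momentum support, but it should appear in the bookkeeping. Third, your bound on $B_{ext}$ over $\mathrm{supp}\,f$ depends on $T$ through the compact set $U_{\delta_0}(T)$, not only on $\delta$ and $\psi_{base}$, since $\psi_{base}'$ is generally unbounded on $(r_1+\delta,r_2-\delta)$; the paper's proof carries the same tacit dependence through the terms $\partial_{p_r}\bigl(\cdots\bar{B}\cdots\bigr)f$ after its integration by parts, so this is not a defect of your argument relative to the paper's.
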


\begin{proof}By \eqref{P+}, \eqref{P-}, 
and the Leibniz rule, the derivative $\partial_r P_+$ is now equal to
	\begin{multline}\label{partialrP+}\partial_r P_+(t,r)
=	Q(t,r)+\int_{t_1(t,r)}^t \partial_rB(\tau,r-t+\tau)d\tau\\-\int_{t_1(t,r)}^t\int_\rtw \hat{p}_\theta f(\tau,r-t+\tau,p_r,p_\theta)dp_rdp_\theta d\tau\\-\int_{t_1(t,r)}^tr\int_\rtw \hat{p}_\theta \partial_rf(\tau,r-t+\tau,p_r,p_\theta)dp_rdp_\theta d\tau,
	\end{multline}where
	\begin{multline*}
	Q(t,r)\eqdef \partial_r (P_+(t_1(t,r),r-t+t_1(t,r))) +\mathbbm{1}_{r<t+r_1}(B-rj_\theta)(t-r+r_1,r_1)\\
	=-\mathbbm{1}_{r<t+r_1}\left(r_1 \partial_t E^b_\theta  (t-r+r_1,	r_1)+r_1 \partial_t B^b  (t-r+r_1,	r_1)-B^b(t-r+r_1,r_1)\right)\\
	+\mathbbm{1}_{r\ge t+r_1} \left((E^0_\theta+B^0)(r-t)+(r-t)\partial_r (E^0_\theta+B^0)(r-t)\right)
	\end{multline*} \Blue{and we recall that $j=0$ at the boundary due to the confinement. }
	Then we have \begin{equation}\label{Q}
\|Q\|_{L^\infty([0,T]\times {[r_1,r_2] })}\leq C_T,
	\end{equation} where $C_T$ is a constant depending on $M_0$, $T$, $\lambda$, $\|f^0\|_{L^\infty}$, the $C^1$ norm of $E_\theta^0$, $B^0$ on ${[r_1,r_2] }$, and the $C^1$ norm of $E^b_\theta$, $B^b$ on $[0,T]$.
	
By treating the radial derivative of $B$ on the right-hand side of \eqref{partialrP+} via considering Amp\`{e}re's circuital law $\eqref{pMaxwell}_3$, 
we obtain\begin{multline*}\partial_r P_+(t,r)
	=	Q(t,r)-\int_{t_1(t,r)}^t \partial_t E_\theta (\tau,r-t+\tau)d\tau\\-2\int_{t_1(t,r)}^t\int_\rtw \hat{p}_\theta f(\tau,r-t+\tau,p_r,p_\theta)dp_rdp_\theta d\tau\\-\int_{t_1(t,r)}^tr\int_\rtw \hat{p}_\theta \partial_rf(\tau,r-t+\tau,p_r,p_\theta)dp_rdp_\theta d\tau.
	\end{multline*}
	Now we are going to use the following splitting of the operator $\partial_r$ motivated by \cite{MR1066384}:
\begin{equation}\label{splittingop}\partial_r = \frac{T_+-S}{1-\hat{p}_r},\end{equation} where $$T_+\eqdef \partial_t +\partial_r\ \text{and}\ S\eqdef \partial_t +\hat{p}_r \partial_r.$$
	Then we further have
	\begin{multline*}\partial_r P_+(t,r)
	=Q(t,r)-\int_{t_1(t,r)}^t \partial_t E_\theta (\tau,r-t+\tau)d\tau\\-2\int_{t_1(t,r)}^t\int_\rtw \hat{p}_\theta f(\tau,r-t+\tau,p_r,p_\theta)dp_rdp_\theta d\tau\\-\int_{t_1(t,r)}^t\int_\rtw\frac{d}{d\tau} \frac{r\hat{p}_\theta}{1-\hat{p}_r}f(\tau,r-t+\tau,p_r,p_\theta)dp_rdp_\theta d\tau\\+\int_{t_1(t,r)}^t\int_\rtw \frac{ r\hat{p}_\theta}{1-\hat{p}_r}Sf(\tau,r-t+\tau,p_r,p_\theta)dp_rdp_\theta d\tau.
	\end{multline*}
Now we implement the temporal integration and use the Vlasov equation $$Sf+\left(E_r+\hat{p}_\theta\bar{B}+\frac{p_\theta^2}{rp^0}\right)\partial_{p_r}f+\left(E_\theta-\hat{p}_r\bar{B}-\frac{p_rp_\theta}{rp^0}\right)\partial_{p_\theta} f=0$$ to obtain that 
	\begin{multline*}\partial_r P_+(t,r)
=Q(t,r)- E_\theta (t,r)+E_\theta(t_1,r-t+t_1)\\-2\int_{t_1(t,r)}^t\int_\rtw \hat{p}_\theta f(\tau,r-t+\tau,p_r,p_\theta)dp_rdp_\theta d\tau\\-\int_\rtw  \frac{r\hat{p}_\theta}{1-\hat{p}_r}f(t,r,p_r,p_\theta)dp_rdp_\theta+\int_\rtw  \frac{r\hat{p}_\theta}{1-\hat{p}_r}f(t_1,r-t+t_1,p_r,p_\theta)dp_rdp_\theta\\-\int_{t_1(t,r)}^t\int_\rtw \frac{ r\hat{p}_\theta}{1-\hat{p}_r}\bigg(\left(E_r+\hat{p}_\theta\bar{B}+\frac{p_\theta^2}{rp^0}\right)\partial_{p_r}f\\+\left(E_\theta-\hat{p}_r\bar{B}-\frac{p_rp_\theta}{rp^0}\right)\partial_{p_\theta} f\bigg)(\tau,r-t+\tau,p_r,p_\theta)dp_rdp_\theta d\tau.
\end{multline*} 
Thus we have
\begin{multline*}\partial_r P_+(t,r)
=Q(t,r)- E_\theta (t,r)+E_\theta(t_1(t,r),r-t+t_1(t,r))\\-2\int_{t_1(t,r)}^t\int_\rtw \hat{p}_\theta f(\tau,r-t+\tau,p_r,p_\theta)dp_rdp_\theta d\tau\\-\int_\rtw  \frac{r\hat{p}_\theta}{1-\hat{p}_r}f(t,r,p_r,p_\theta)dp_rdp_\theta+\int_\rtw  \frac{r\hat{p}_\theta}{1-\hat{p}_r}f(t_1,r-t+t_1,p_r,p_\theta)dp_rdp_\theta\\+\int_{t_1(t,r)}^t\int_\rtw \bigg[\partial_{p_r}\left(\left(\frac{ r\hat{p}_\theta}{1-\hat{p}_r}\right)\left(E_r+\hat{p}_\theta\bar{B}+\frac{p_\theta^2}{rp^0}\right)\right)f\\+\partial_{p_\theta}\left(\left(\frac{r\hat{p}_\theta}{1-\hat{p}_r}\right)\left(E_\theta-\hat{p}_r\bar{B}-\frac{p_rp_\theta}{rp^0}\right)\right) f\bigg](\tau,r-t+\tau,p_r,p_\theta)dp_rdp_\theta d\tau,
\end{multline*} 
by an integration by parts. \Blue{Here we note that the denominators $1-\hat{p}_r$ are bounded below as due to the compact momentum support by Lemma \ref{momsupp} as $1-\hat{p}_r>c(T,M_0,C,\tilde{C})>0.$} Therefore, we use Proposition \ref{aprioriestimatesEB}, Lemma \ref{momsupp}, \eqref{Q}, and \eqref{propagationLinfty} to obtain that 
$$\|\partial_rP_+\|_{L^\infty([0,T]\times {[r_1,r_2] })}\leq C_T,$$ for some $C_T>0$ which depends on $M_0$, $T$, $\lambda$, $\|B^0\|_{L^\infty}$, $\|E^0_\theta\|_{L^\infty}$, $\|B^b\|_{L^\infty}$, $\|E^b_\theta\|_{L^\infty}$, and $\|(1+p^0)f^0\|_{L^1}$. 

 Similarly, we can also obtain $\|\partial_rP_-\|_{L^\infty([0,T]\times {[r_1,r_2] })}\leq C_T,$ by using $$\partial_r =\frac{S-T_-}{1+\hat{p}_r},$$ where $T_-\eqdef \partial_t-\partial_r$ and $S\eqdef \partial_t +\hat{p}_r\partial_r$, in place of \eqref{splittingop}. This completes the proof.
\end{proof}
\subsection{Derivatives of \texorpdfstring{$f$}{}} Finally, we are ready to obtain an estimate for the derivatives of the solution $f$. 
\begin{lemma}Suppose $f\in C^2([0,T]\times {[r_1,r_2] }\times \rtw).$ Then we have $$\|f\|_{C^1([0,T]\times {[r_1,r_2] }\times \rtw)}\leq C_T,$$ for some constant $C_T>0$ which depends only on $M_0$, $T$, $\lambda$, $\|f^0\|_{L^\infty}$, the $C^1$ norm of $f^0$, $E_\theta^0$, $B^0$ on ${[r_1,r_2] }$, and the $C^1$ norm of $E^b_\theta$, $B^b$ on $[0,T]$.
\end{lemma}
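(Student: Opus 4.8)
The plan is to commute the first-order derivatives $\partial_r$, $\partial_{p_r}$, $\partial_{p_\theta}$ through the Vlasov operator, obtaining a closed linear system of transport equations for $(\partial_r f,\partial_{p_r}f,\partial_{p_\theta}f)$ along the characteristics \eqref{characteristics}, and then to close the estimate by Gr\"onwall. Using the rotational symmetry \eqref{eqrotsym} so that $\partial_\theta f=0$, write \eqref{pVlasov} as $Df=0$ with the material derivative
\[
D \eqdef \partial_t + \hat{p}_r\partial_r + a\,\partial_{p_r} + b\,\partial_{p_\theta},\qquad
a \eqdef E_r+\hat{p}_\theta\bar{B}+\frac{p_\theta^2}{rp^0},\qquad
b \eqdef E_\theta-\hat{p}_r\bar{B}-\frac{p_rp_\theta}{rp^0},
\]
which is exactly the derivative along \eqref{characteristics}. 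Since $\hat{p}_r$ is independent of $(t,r)$, differentiating $Df=0$ in $r$ and in $q\in\{p_r,p_\theta\}$ produces only first-order commutator terms, namely
\[
D(\partial_r f)=-(\partial_r a)\,\partial_{p_r}f-(\partial_r b)\,\partial_{p_\theta}f,
\]
\[
D(\partial_q f)=-(\partial_q\hat{p}_r)\,\partial_r f-(\partial_q a)\,\partial_{p_r}f-(\partial_q b)\,\partial_{p_\theta}f .
\]
Evaluating along a characteristic $s\mapsto(R(s),P_r(s),P_\theta(s))$ turns this into a linear ODE system $\tfrac{d}{ds}V=A(s)V$ for the vector $V\eqdef(\partial_r f,\partial_{p_r}f,\partial_{p_\theta}f)$, whose datum $V(0)$ is controlled by the $C^1$-norm of $f^0$.

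The heart of the proof is to bound every entry of $A(s)$ on $[0,T]$ by a constant of the asserted form. For the $p$-derivative coefficients, $\partial_q\hat{p}_r$ together with the $p$-derivatives of $\hat{p}_\theta\bar B$, $p_\theta^2/(rp^0)$ and $p_rp_\theta/(rp^0)$ are all bounded using $r\ge r_1>0$, the $L^\infty$ field bounds of Proposition~\ref{E1estimate} and Proposition~\ref{aprioriestimatesEB}, and the compact momentum support $M(T)$ from Lemma~\ref{momsupp}. For the $r$-derivatives I would write $\partial_r a=\partial_r E_r+\hat{p}_\theta\,\partial_r\bar B-\tfrac{p_\theta^2}{r^2p^0}$ and $\partial_r b=\partial_r E_\theta-\hat{p}_r\,\partial_r\bar B+\tfrac{p_rp_\theta}{r^2p^0}$, and control the field derivatives by: $\partial_r E_r=\rho-E_r/r$ via Gauss's law $\eqref{pMaxwell}_1$ and Corollary~\ref{rhojbound}; $\partial_r E_\theta=\tfrac1r\partial_r(rE_\theta)-E_\theta/r$ and $\partial_r B=\tfrac1r\partial_r(rB)-B/r$ via the preceding lemma bounding $\partial_r(rE_\theta)$ and $\partial_r(rB)$; and $\partial_r\bar B=\partial_r B+\partial_r B_{ext}$, where $\partial_r B_{ext}$ is bounded because, by the confinement (Lemma~\ref{magconfi} and Theorem~\ref{maintheorem2}), the support of $f$ stays in a fixed compact subset of $(r_1+\delta,r_2-\delta)$ on which $\psi_{ext}=\psi_{base}$ is $C^2$, so $\partial_r B_{ext}$ involves only $\psi_{base}''$ on that set.

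Once $A(s)$ is bounded entrywise by $C_T$, Gr\"onwall's inequality applied to $|V(s)|$ along each characteristic yields $|V(s)|\le |V(0)|\,e^{C_Ts}\le C_T$, giving the uniform bounds on $\partial_r f$, $\partial_{p_r}f$ and $\partial_{p_\theta}f$; the bound on $\partial_t f$ then follows directly from $\partial_t f=-\hat{p}_r\partial_r f-a\,\partial_{p_r}f-b\,\partial_{p_\theta}f$ and the already-established bounds on the coefficients and on $V$. I expect the main obstacle to be the coefficient bookkeeping of the second paragraph: in particular controlling $\partial_r\bar B$, where one must simultaneously invoke the self-consistent derivative bound from the preceding lemma and the $C^2$-regularity of the truncated potential on the confined region, and handling the singular factors $r^{-1}$, $r^{-2}$ and the momentum-growth factors, all of which are rendered harmless only by $r\ge r_1>0$ and by the finiteness of the momentum support $M(T)$ on $[0,T]$.
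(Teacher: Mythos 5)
Your proposal is correct and follows essentially the same route as the paper: differentiate the Vlasov equation in $r$ and in $p$, integrate the resulting linear transport equations along the characteristics \eqref{characteristics}, bound the coefficient terms using Propositions~\ref{E1estimate} and \ref{aprioriestimatesEB}, the preceding lemma on $\partial_r(rE_\theta)$ and $\partial_r(rB)$, and the momentum-support bound of Lemma~\ref{momsupp}, then close with Gr\"onwall and recover $\partial_t f$ from the equation itself. Your packaging as a matrix ODE $\tfrac{d}{ds}V=A(s)V$ rather than the paper's summed quantity $D(f)(s)=\|\partial_r f\|_{L^\infty}+\|\nabla_p f\|_{L^\infty}$ is only cosmetic, and your explicit remark that the confinement keeps the support where $\psi_{ext}=\psi_{base}$ is $C^2$ (so $\partial_r B_{ext}$ is controlled) is a point the paper leaves implicit.
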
\begin{proof}
We start with differentiating the Vlasov equation in cylindrical-coordinates \eqref{pVlasov} with respect to $r$ variables. Then we observe that
\begin{multline*}
\left(\partial_t +\hat{p}_r\partial_r+\left(E_r+\hat{p}_\theta\bar{B}+\frac{p_\theta^2}{rp^0}\right)\partial_{p_r}+\left(E_\theta-\hat{p}_r\bar{B}-\frac{p_rp_\theta}{rp^0}\right)\partial_{p_\theta}\right)\partial_r f\\=-\partial_r\left(E_r+\hat{p}_\theta\bar{B}+\frac{p_\theta^2}{rp^0}\right)\partial_{p_r}f-\partial_r\left(E_\theta-\hat{p}_r\bar{B}-\frac{p_rp_\theta}{rp^0}\right)\partial_{p_\theta} f.
\end{multline*}Recall Lemma \ref{magconfi} and Lemma \ref{momsupp} and define \begin{equation}\label{Mbar}\bar{M}(t)\eqdef M_0+4\tilde{C}\frac{e^{Ct}}{C}.\end{equation}  Then we integrate this identity along the characteristics and obtain 
\begin{multline}\label{I1}\|\partial_r f(t)\|_{L^\infty((r_1,r_2)\times \{|p|\leq \bar{M}(t)\})}\leq \|\partial_r f^0\|_{{L^\infty([r_1+\delta_0,r_2-\delta_0]\times \{|p|\leq \bar{M}(t)\})}}\\
+\int_0^t  \left\|-\partial_r\left(E_r+\hat{p}_\theta\bar{B}+\frac{p_\theta^2}{rp^0}\right)\partial_{p_r}f-\partial_r\left(E_\theta-\hat{p}_r\bar{B}-\frac{p_rp_\theta}{rp^0}\right)\partial_{p_\theta} f\right\|_{L^\infty} \;ds\\
\lesssim  \|\partial_r f^0\|_{{L^\infty}}
+\int_0^t \left( \|E\|_{C^1}+\|B\|_{C^1}+\bar{M}(t)\right) \|\nabla_p f\|_{L^\infty} \;ds,
\end{multline}as $\left|\frac{p_r}{p^0}\right| \text{ and } \left|\frac{p_\theta}{p^0}\right|\leq 1.$
On the other hand, we differentiate \eqref{pVlasov} with respect to $p$ variables and obtain
\begin{multline*}
\partial_t\nabla_p f +\hat{p}_r\partial_r\nabla_pf
+\left(E_r+\hat{p}_\theta\bar{B}+\frac{p_\theta^2}{rp^0}\right)\partial_{p_r}\nabla_pf+\left(E_\theta-\hat{p}_r\bar{B}-\frac{p_rp_\theta}{rp^0}\right)\partial_{p_\theta}\nabla_pf\\=-\nabla_p(\hat{p}_r)\partial_r f-\nabla_p\left(E_r+\hat{p}_\theta\bar{B}+\frac{p_\theta^2}{rp^0}\right)\partial_{p_r}f-\nabla_p\left(E_\theta-\hat{p}_r\bar{B}-\frac{p_rp_\theta}{rp^0}\right)\partial_{p_\theta} f\\
=-\nabla_p(\hat{p}_r)\partial_r f-\nabla_p\left(\hat{p}_\theta\bar{B}+\frac{p_\theta^2}{rp^0}\right)\partial_{p_r}f-\nabla_p\left(-\hat{p}_r\bar{B}-\frac{p_rp_\theta}{rp^0}\right)\partial_{p_\theta} f.
\end{multline*}
We similarly integrate this identity along the characteristics and obtain 
\begin{multline}\label{I2}\|\nabla_p f(t)\|_{L^\infty((r_1,r_2)\times \{|p|\leq \bar{M}(t)\})}\leq \|\nabla_p f^0\|_{{L^\infty([r_1+\delta_0,r_2-\delta_0]\times \{|p|\leq \bar{M}(t)\})}}\\
+\int_0^t  \left\|-\nabla_p(\hat{p}_r)\partial_r f-\nabla_p\left(\hat{p}_\theta\bar{B}+\frac{p_\theta^2}{rp^0}\right)\partial_{p_r}f-\nabla_p\left(-\hat{p}_r\bar{B}-\frac{p_rp_\theta}{rp^0}\right)\partial_{p_\theta} f\right\|_{L^\infty} \;ds\\
\lesssim  \|\nabla_p f^0\|_{{L^\infty}}
+\int_0^t \left( \|\partial_r f\|_{L^\infty}+\left(\|B\|_{L^\infty}+|\bar{M}(t)|\right) \|\nabla_p f\|_{L^\infty} \right) \; ds
\end{multline}
By adding \eqref{I1} and \eqref{I2} and noting that the support of $f$ is in $(r_1,r_2)\times \{|p|\leq \bar{M}(t)\}$, we finally have
$$D(f)(t)\lesssim 1+\int_0^t D(f)(s) ds,$$ where $D(f)(s)\eqdef \|\partial_r f\|_{L^\infty}+\|\nabla_p f\|_{L^\infty}.$ This yields the bounds for $\|\partial_r f\|_{L^\infty}$ and $\|\nabla_p f\|_{L^\infty}$. Finally, we use the Vlasov equation $$\partial_t f=-\hat{p}_r\partial_rf-\left(E_r+\hat{p}_\theta\bar{B}+\frac{p_\theta^2}{rp^0}\right)\partial_{p_r}f-\left(E_\theta-\hat{p}_r\bar{B}-\frac{p_rp_\theta}{rp^0}\right)\partial_{p_\theta} f$$ and further obtain the bound for $\|\partial_t f \|_{L^\infty}.$ This completes the proof.
\end{proof}

\section{Global wellposedness: Proof of Theorem \ref{maintheorem1}}\label{sec:global}Together with all the estimates on the functions $f$, $E$, $B$, and their derivatives from the previous sections, we can obtain the global wellposedness of the problem \eqref{pVlasov}, \eqref{pMaxwell},\eqref{initial}, and \eqref{boundary}, and Hypothesis \ref{extmagassumption} as follows.

\noindent \textbf{(Existence)}
First of all, we obtain the existence of a global $C^1$ solution via the standard iteration argument which was introduced in the literature \cite{MR1066384,MR1463042,GL1996,MR1620506,MR1777635,MR3294216}, etc. We leave these standard details to the interested readers.

\noindent \textbf{(Uniqueness)}
Fix any time interval $[0,T]$ for some $T>0$. For the uniqueness, we suppose that there are two global $C^1$ solutions $(f^i, E^i, B^i)$ for $i=1,2$ to the problem \eqref{pVlasov}, \eqref{pMaxwell}, \eqref{initial}, and \eqref{boundary}, and Hypothesis \ref{extmagassumption}. Then we have
\begin{multline}\label{uniqueness}
\partial_t \tilde{f} +\hat{p}_r\partial_r\tilde{f}+\hat{p}_\theta \frac{1}{r}\partial_\theta \tilde{f}\\+\left(E^2_r+\hat{p}_\theta\bar{B}^{2}+\frac{p_\theta^2}{rp^0}\right)\partial_{p_r}\tilde{f}+\left(E^2_\theta-\hat{p}_r\bar{B}^2-\frac{p_rp_\theta}{rp^0}\right)\partial_{p_\theta} \tilde{f}\\
=-\left(\tilde{E}_r+\hat{p}_\theta\tilde{B}\right)\partial_{p_r}f^1-\left(\tilde{E}_\theta-\hat{p}_r\tilde{B}\right)\partial_{p_\theta} f^1,
\end{multline}where $\bar{B}^i=B^i+B_{ext}$ and $$\tilde{f}\eqdef f^1-f^2,\ \tilde{E}\eqdef E^1-E^2,\ \text{and}\ \tilde{B}\eqdef B^1-B^2.$$ Also, note that $\tilde{f}(0,r,p_r,p_\theta)=0$ for all $(r,p_r,p_\theta)\in (r_1,r_2)\times \rtw.$ By Section \ref{confinement}, we obtain that the characteristic trajectories never touch the boundaries. Thus, we integrate \eqref{uniqueness} along the characteristics and obtain that
\begin{multline}\label{ftilde}\|\tilde{f}(t)\|_{L^\infty((r_1,r_2)\times\rtw)}\leq \|\nabla_p f^1\|_{L^\infty([0,t]\times(r_1,r_2)\times\rtw)}\\\times \int_0^t \left(\|\tilde{E}(s)\|_{L^\infty((r_1,r_2))}+\|\tilde{B}(s)\|_{L^\infty((r_1,r_2))}\right)ds,
\end{multline}for each $t\in [0,T]$.
We first estimate the upperbound for $\|\tilde{E}_r(s)\|_{L^\infty((r_1,r_2))}$. Since $\partial_r(r \tilde{E_r}) = r\rho(\tilde{f})$ where $\rho(\tilde{f})$ is defined as $$\rho(\tilde{f})(t,r)=\int_{\mathbb{R}^2}\ \tilde{f}(t,r,p)dp,$$ we observe that 
$$\|\tilde{E}_r(s)\|_{L^\infty((r_1,r_2))}\leq\Blue{\frac{1}{r_1}} \int_{r_1}^{r_2} \int_{|p|\leq \bar{M}(s)}\  r\tilde{f}(s,r,p_r,p_\theta)\ dp_rdp_\theta dr\lesssim_T \|\tilde{f}(s)\|_{L^\infty},$$ where $\bar{M}$ is defined as \eqref{Mbar}.
For the estimates on $\|\tilde{E}_\theta(s)\|_{L^\infty((r_1,r_2))}$ and\newline $\|\tilde{B}(s)\|_{L^\infty((r_1,r_2))}$, we observe that $\tilde{E}_\theta$ and $\tilde{B}$ satisfy 
$$\partial_t (r\tilde{E}_\theta)+\partial_r(r\tilde{B})=\tilde{B}-rj_\theta(\tilde{f}),
$$
and $$
\partial_t(r\tilde{B})+\partial_r(r\tilde{E}_\theta)=0.
$$ Then since $\tilde{B}^0,\tilde{B}^b,\tilde{E}^0,$ and $\tilde{E}^b$ are all zero in \eqref{prop2.6}, we have 
\begin{multline}|\tilde{B}(t,r)|\leq
\frac{1}{r_1}\bigg(\int_0^t \|B(\tau)\|_{L^\infty({[r_1,r_2] })}d\tau
\\+\int_{\min\{t_1,t_2\}}^t\int_{|p|\leq \bar{M}(\tau)}r|\hat{p}_\theta||\tilde{f}(\tau,r-t+\tau,p_r,p_\theta)|dp_rdp_\theta d\tau\bigg),
\end{multline}by the definition  $j_\theta\eqdef \int_\rtw \hat{p}_\theta fdp_rdp_\theta$. Then by taking the supremum in $r$ variable and taking $|\hat{p}_\theta|\le 1$, we have
\begin{multline*}\|\tilde{B}(s)\|_{L^\infty((r_1,r_2))}\lesssim_T \sup_{r\in (r_1,r_2)}\int_0^s \int_{|p|\leq \bar{M}(\tau)}|\tilde{f}(\tau,r,p_r,p_\theta)|dp_rdp_\theta d\tau\\ \lesssim_T  \sup_{\tau\in [0,s]} \|\tilde{f}(\tau)\|_{L^\infty} .\end{multline*}Similarly we obtain  $\|\tilde{E}_\theta(s)\|_{L^\infty}\lesssim_T \sup_{\tau\in [0,s]} \|\tilde{f}(\tau)\|_{L^\infty}.$  Now we go back to \eqref{ftilde} and observe that
$$\tilde{u}(t)\lesssim_T \int_0^t \tilde{u}(s)ds,$$ where $\tilde{u}(s)\eqdef \sup_{\tau\in [0,s]} \|\tilde{f}(\tau)\|_{L^\infty},$ as we have $\|\nabla_p f^1\|_{L^\infty}\leq C_T$ for some $C_T>0.$ Since $\tilde{u}(0)=0,$ we obtain that $\tilde{u}(s)=0$ for any $s\in [0,T]$, and hence, $\tilde{f}(t)=\tilde{E}(t)=\tilde{B}(t)=0$ for any $t\in[0,T].$ This completes the proof for the uniqueness.  \qed

\bigskip

\noindent \textbf{(Non-negativity)} Suppose that $f^0$ is initially non-negative. Then $f$ is constant along the characteristics defined in Section \ref{confinement} and hence is non-negative.  \qed

%



\providecommand{\bysame}{\leavevmode\hbox to3em{\hrulefill}\thinspace}
\providecommand{\href}[2]{#2}

\end{document}